\pretocmd{\NAT@citexnum}{\@ifnum{\NAT@ctype>\z@}{\let\NAT@hyper@\relax}{}}{}{}
\newtheorem{Th}{Theorem}
\newtheorem{Prop}[Th]{Proposition}
\newtheorem{Le}[Th]{Lemma}
\theoremstyle{definition}
\newtheorem*{Rk}{Remark}
\mathchardef\ordinarycolon\mathcode`\:
\newcommand{\trmin}{%
{%
    \smash{%
            \overset{%
                \raisebox{-3px}{%
                    \scalebox{0.5}{%
                        $\triangle$%
                    }%
                }%
            }%
            {m}%
        }%
    }%
}
\newcommand{\cellmin}{%
{%
    \smash{%
        \overset{%
            \raisebox{-.6px}{%
                \scalebox{0.5}{%
                    $\pentagon$%
                }%
            }%
        }
        {m}%
    }%
}
}
\newcommand{\window}{\mathbf{W}_{\!\rho}}
\newcommand{\WW}{\mathbf{W}}
\newcommand{\depgraphs}{\Lambda_K}
\newcommand{\depgraph}{\mathbf{G}}
\newcommand{\cells}{V_C}
\newcommand{\lines}{V_L}
\newcommand{\degree}{\operatorname{degree}}
\newcommand{\neighbours}{\operatorname{neighbours}}
\newcommand{\measure}{\operatorname{\mu}} 
\newcommand{\spheremeasure}{\operatorname{\sigma}}
\newcommand{\borel}{\operatorname{\mathcal{B}}}
\newcommand{\affinelines}{\operatorname{\mathcal{A}}}
\newcommand{\RR}{\mathbf{R}}
\newcommand{\NN}{\mathbf{N}}
\newcommand{\XX}{\hat{\mathbf{X}}}
\newcommand{\SSS}{\mathbf{S}}
\newcommand{\PHT}{\mathbf{X}}
\newcommand{\mosaic} {\mathfrak{m}_\textsc{pht}}
\newcommand{\linesintersecting}{\operatorname{\phi}}
\newcommand{\convex}{\triangle}
\newcommand{\cell}{\mathcal{C}}
\newcommand{\Po}{\operatorname{Po}}
\newcommand{\vv}{v_{\rho}}
\newcommand{\conv}[2][n]{\underset{#1\rightarrow #2}{\longrightarrow}}
\newcommand{\eq}[2][n]{\underset{#1\rightarrow #2}{\sim}}
\newcommand{\EEE}[1]{\operatorname{\mathbb{E}}\left[\,#1\,\right]}
\newcommand{\EE}{\operatorname{\mathbb{E}}}
\newcommand{\PPP}[1]{\operatorname{\mathbb{P}}\left(#1\right)}
\newcommand{\PP}{\operatorname{\mathbb{P}}}
\newcommand{\ind}[1]{\mathbb{1}_{#1}\,}
\newcommand{\sumevent}[1]{E_{#1}}
\newcommand{\emptyevent}[1]{E^\circ_{#1}}
\newcommand{\cE}{A  }
\newcommand{\cK}{\mathcal{K}}
\newcommand{\cN}{\mathcal{N}}
\DeclareRobustCommand{\stirling}{\genfrac\{\}{0pt}{}}
\begin{document}
\pagenumbering{arabic}
\pagestyle{plain}
\author{
    Nicolas Chenavier\thanks{
        Universit\'e du Littoral C\^ote
        d'Opale, LMPA,
         Calais, France.
        \href{mailto:nicolas.chenavier@lmpa.univ-littoral.fr}
             {nicolas.chenavier@lmpa.univ-littoral.fr}
    }
    \and 
    Ross Hemsley\thanks{
        Inria Sophia Antipolis~--~M\'editerran\'ee, 
        \href{mailto:mail@ross.click}{mail@ross.click}
    }
}
\title{Extremes for the inradius in the Poisson line tessellation}
\maketitle
\begin{abstract}
A Poisson line tessellation is observed in the window $\window :=
B(0,\pi^{-1/2}\rho^{1/2})$, for $\rho>0$. With each cell of the tessellation, we
associate the inradius, which is the radius of the largest ball contained in the
cell. Using Poisson approximation, we compute the limit distributions of the
largest and smallest order statistics for the inradii of all cells whose nuclei
are contained in $\window$ as $\rho$ goes to infinity. We additionally prove
that the limit shape of the cells minimising the inradius is a triangle.
\end{abstract}
\vspace{0.5cm}

\textbf{Keywords}
\quad 
line tessellations,
Poisson point process,
extreme values,
order statistics.
\vspace{0.3cm}

\textbf{AMS 2010 Subject Classifications}
\quad
60D05 -- 60G70 -- 60G55 -- 60F05 -- 62G32

\section{Introduction}
\paragraph{The Poisson line tessellation}
Let $\XX$ be a stationary and isotropic Poisson line process of intensity
$\hat{\gamma}=\pi$ in $\RR^2$ endowed with its scalar product $\langle
\cdot,\cdot\rangle$ and its Euclidean norm $|\cdot |$. By $\affinelines$, we
shall denote the set of affine lines which do not pass through the origin
$0\in\RR^2$. Each line can be written as
\begin{equation}\label{def:Hut} 
    H(u,t) 
    :=
    \Big\{
        \,
        x\in \RR^2,\;
        \langle x,u \rangle = t
        \,
    \Big\},
\end{equation} 
for some $t\in\RR$, $u\in\SSS$, where $\SSS$ is the unit sphere in $\RR^2$. When
$t>0$, this representation is unique. The intensity measure of
$\XX$ is then given by
\begin{equation}\label{def:mu} 
    \mu(\mathcal{E}) 
    :=
    \int_{\SSS}\int_{\RR_+}\ind{H(u,r)\in \mathcal{E}}dr\spheremeasure(du),
\end{equation}
for all Borel subsets $\mathcal{E} \subseteq \affinelines$, where $\affinelines$
is endowed with the Fell topology (see for example \citet{SW}, p563) and where
$\spheremeasure(\cdot)$ denotes the uniform measure on $\SSS$ with the
normalisation $\spheremeasure(\SSS)=2\pi$.  The set of closures of the connected
components of $\RR^2\setminus\XX$ defines a stationary and isotropic random
tessellation with intensity $\gamma^{(2)}=\pi$ (see for example (10.46)
in~\citet{SW}) which is the so-called
\emph{Poisson line tessellation}, $\mosaic$. By a slight abuse of
notation, we also write $\XX$ to denote the union of lines. An example of
the Poisson line tessellation in $\RR^2$ is depicted in
Figure~\ref{fig:pht_example}.
\begin{figure}
    \begin{center}
       \includegraphics[scale=0.9]{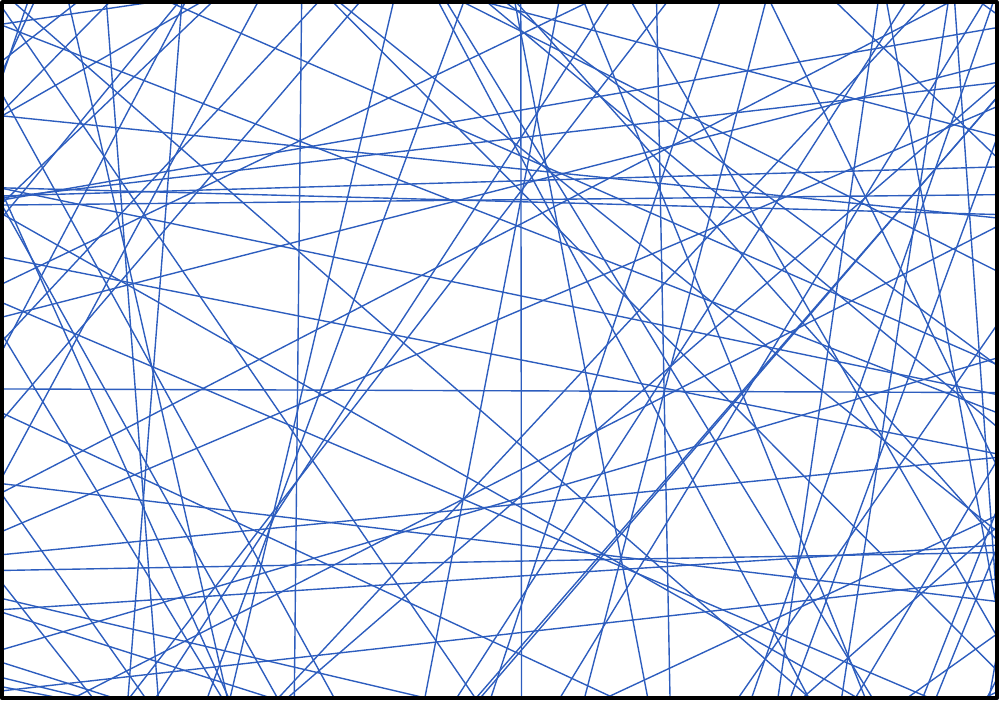}
    \end{center}
    \caption{
        A realisation of the Poisson line tessellation truncated to a
        window.
        \label{fig:pht_example}
    }
\end{figure}
Let $B(z,r)$ denote the (closed) disc of radius $r\in \RR_+$, centred at
$z\in\RR^2$ and let $\cK$ be the family of convex bodies 
(i.e. convex compact sets in $\RR^2$
with non-empty interior), endowed with the Hausdorff topology.
With each convex body $K\in\cK$, we may now define the
\emph{inradius},
\begin{equation*}
    r(K) := 
    \sup
    \Big\{
        \,
        r : B(z,r) \subset K, \, z\in \RR^2, \, r \in \RR_+
        \,
    \Big\}.
\end{equation*}
When there exists a unique $z'\in \RR^2$ such that $B(z',r(K)) \subset K$, we
define $z(C) := z'$ to be the \emph{incentre} of $K$. If no such $z'$ exists, we
take $z(K) := 0\in \RR^2$. Note that each cell $C\in\mosaic$ has a unique $z'$
almost surely. In the rest of the paper we shall use the shorthand $B(K) :=
B(z(K), r(K))$. To describe the mean behaviour of the tessellation, we recall
the definition of the typical cell as follows. Let $W$ be a Borel subset of
$\RR^2$ such that $\lambda_2(W)\in (0,\infty)$, where $\lambda_2$ is the
$2$-dimensional Lebesgue measure. The \emph{typical cell} $\cell$ of a Poisson
line tessellation, $\mosaic$ is a random polytope whose distribution is
characterised by
\begin{equation}\label{campbell}
    \EE[f(\cell)] 
    =
    \frac{1}{\pi\lambda_2(W)}
    \cdot
    \EEE{
        \sum_{
            \substack{
                C\in\mosaic,\\
                z(C)\in W
            }
        }
        f(C-z(C))
    },
\end{equation} 
for all bounded measurable functions on the set of convex bodies
$f\colon\cK\to\RR$. The typical cell of the Poisson line tessellation has been
studied extensively in the literature, including calculations of mean
values~\cite{Mi2,Mi3} and distributional results~\cite{Cal7} for a number of
different geometric characteristics. A long standing conjecture due to D.G.
Kendall concerning the asymptotic shape of the typical cell conditioned to be
large is proved in \citet{HRS}. The shape of small cells is also considered in
\citet{BRT} for a rectangular Poisson line tessellation. Related results have
also been obtained by \citet{HS4} concerning the approximate properties of
random polytopes formed by the Poisson hyperplane process. Global properties of
the tessellation have also been established including, for example, central
limit theorems~\cite{H4,HSS2}.

In this paper, we focus on the extremal properties of geometric characteristics
for the cells of a Poisson line tessllation whose incentres are contained in a
window. The general theory of extreme values deals with stochastic sequences
\cite{HT} or random fields \cite{LR} (more details may be found in the reference
works by \citet{HF} and \citet{R}.) To the best of the authors' knowledge, it
appears that the first application of extreme value theory in stochastic
geometry was given by Penrose (see Chapters 6,7 and 8 in \citet{Pr}). More
recently, Schulte and Th\"{a}le~\cite{ST} established a theorem to derive the
order statistics of a general functional, $f_k(x_1,\ldots, x_k)$ of $k$ points
of a homogeneous Poisson point process, a work which is related to the study of
$U$-statistics. \citet{CC} went on to provide a series of results for the
extremal properties of cells in the Poisson-Voronoi
tessellation, which were then extended by \citet{Chen}, who gave a general
theorem for establishing this type of limit theorem in tessellations satisfying
a number of conditions. Unfortunately, none of these methods are directly
applicable to the study of extremes for the geometric properties of cells in the
Poisson line tessellation, due in part to the fact that even cells which are
arbitrarily spatially separated may share lines.

\paragraph{Potential applications} 
We remark that in addition to the classical references, such as the work by
\citet{Gou} concerning the trajectories of particles in bubble chambers, a
number of new and interesting applications of random line processes are emerging
in the field of Computer Science. Recent work by \citet{VY} concerns the use of
random hyperplane tessellations for dimension reduction with applications to
high dimensional estimation. \citet{VY} in particular point to a lack of results
concerning the global properties of cells in the Poisson line tessellation in
the traditional stochastic geometry literature. Other interesting applications
for random hyperplanes may also be found in context of locality sensitive
hashing~\cite{C}. We believe that our techniques will provide useful tools for
the analysis of algorithms in these contexts and others.
Finally, we note that investigating the extremal properties of
cells could also provide a way to describe the regularity of tessellations.

\subsection{Contributions}
Formally, we shall consider the case in which only a part of the tessellation is
observed in the \emph{window} $\window := B\left(0,\pi^{-1/2}\rho^{1/2}\right)$,
for $\rho>0$. Given a measurable function $f\colon\cK\to\RR$ satisfying
$f(C+x)=f(C)$ for all $C\in\cK$ and $x\in\RR^2$, we consider the order
statistics of $f(C)$ for all cells $C\in \mosaic$ such that $z(C)\in \window$ in
the limit as $\rho \rightarrow \infty$. In this paper, we focus on the case
$f(C):=R(C)$ in particular because the inradius is one of the rare geometric
characteristics for which the distribution of $f(\cell)$ can be made explicit.
More precisely, we investigate  the asymptotic behaviour of $m_{\window}[r]$ and
$M_{\window}[r]$, which we use respectively to denote the inradii of the $r$-th
smallest and the $r$-th largest inballs for fixed $r\geq 1$. Thus for $r=1$ we
have
\begin{equation*} 
    m_{\window}[1]
    \,
    =
    \min_{ \substack{C \in \mosaic,\\z(C)\in \window} }
    R(C) 
    \qquad
    \text{and}
    \qquad
    M_{\window}[1] 
    \,
    =
    \max_{ \substack{C \in \mosaic,\\z(C)\in \window} }
    R(C).
\end{equation*}
The asymptotic behaviours of $m_{\window}[r]$ and $M_{\window}[r]$ are given in
the following theorem.
\begin{Th} \label{Th:maxins} 
    Let $\mosaic$ be a stationary, isotropic Poisson line tessellation in
    $\RR^2$ with intensity $\pi$ and let $r\geq 1$ be fixed, then
    \begin{enumerate}[(i)]
        \item \label{case:minins}
        \label{eq:minins}
        for any $t \geq 0$,
        \begin{equation*}
            \PP
            \bigg(
                m_{\window}[r] \geq (2\pi^2\rho)^{-1}t
            \bigg)
            \conv[\rho]{\infty}
            e^{-t}\sum_{k=0}^{r-1}\frac{t^k}{k!},
        \end{equation*}
        \item  \label{case:maxins}
        for any $t\in\RR$,
        \label{eq:maxins}
        \begin{equation*}
            \PP
            \bigg(
                M_{\window}[r]
                \leq
                \frac{1}{2\pi} 
                (\log(\rho) + t)
            \bigg)
                \conv[\rho]{\infty}
                e^{-e^{-t}}
                \sum_{k=0}^{r-1}
                \frac{(e^{-t})^k}{k!}.
        \end{equation*}
    \end{enumerate}
\end{Th}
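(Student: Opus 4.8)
The plan is to deduce both parts from a single Poisson-approximation argument. Fix $r\geq 1$, set $v_\rho^- := (2\pi^2\rho)^{-1}t$ and $v_\rho^+ := \tfrac1{2\pi}(\log\rho+t)$, and introduce the counting variables
\begin{equation*}
    N_\rho^- := \#\bigl\{\,C\in\mosaic : z(C)\in\window,\ R(C)<v_\rho^-\,\bigr\},
    \qquad
    N_\rho^+ := \#\bigl\{\,C\in\mosaic : z(C)\in\window,\ R(C)>v_\rho^+\,\bigr\}.
\end{equation*}
Since $m_\window[r]$ and $M_\window[r]$ are respectively the $r$-th smallest and the $r$-th largest of the numbers $R(C)$ over the cells with $z(C)\in\window$, we have $\{m_\window[r]\geq v_\rho^-\}=\{N_\rho^-\leq r-1\}$ and $\{M_\window[r]\leq v_\rho^+\}=\{N_\rho^+\leq r-1\}$, and since $\PP(\Po(\lambda)\leq r-1)=e^{-\lambda}\sum_{k=0}^{r-1}\lambda^k/k!$, it is enough to prove the convergences in distribution
\begin{equation*}
    N_\rho^-\conv[\rho]{\infty}\Po(t)
    \qquad\text{and}\qquad
    N_\rho^+\conv[\rho]{\infty}\Po(e^{-t}).
\end{equation*}
I would obtain these from the Chen--Stein method, writing $N_\rho^\pm$ as a sum of indicator variables indexed by the relevant cells, or, equivalently, by checking that every factorial moment of $N_\rho^\pm$ converges to that of the limiting Poisson law; in either case the task splits into (a) the first-moment asymptotics and (b) the control of the higher correlations.

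For (a), translation invariance of the inradius together with the Campbell-type identity~\eqref{campbell}, applied with $f(C)=\ind{R(C)<v_\rho^-}$ and with $f(C)=\ind{R(C)>v_\rho^+}$ and using $\lambda_2(\window)=\rho$, give
\begin{equation*}
    \EE[N_\rho^-]=\pi\rho\,\PP(R(\cell)<v_\rho^-),
    \qquad
    \EE[N_\rho^+]=\pi\rho\,\PP(R(\cell)>v_\rho^+),
\end{equation*}
so what is needed is the behaviour of the law of the inradius of the typical cell near $0$ and near $+\infty$. This I would extract from the elementary fact that, almost surely, the inball of every cell is tangent to exactly three lines of $\XX$ and is the incircle of the triangle they form, while conversely any three lines of $\XX$ whose incircle has empty interior are the contact lines of the inball of a unique cell. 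Applying the multivariate Mecke equation to these three lines, using that the $\mu$-measure of the lines meeting a disc of radius $v$ equals $2\pi v$ (so that its interior is avoided by $\XX$ with probability $e^{-2\pi v}$), and then changing variables from the three lines to the incentre, the inradius and the three angles locating the contact points, one obtains a closed form for $\PP(R(\cell)>v)$, with a tail of order $e^{-2\pi v}$; substituting $v_\rho^\pm$ yields $\EE[N_\rho^-]\to t$ and $\EE[N_\rho^+]\to e^{-t}$. (The same contact-triangle description, rescaled around a minimal inball, is also what produces the limiting triangular shape of the inradius-minimising cells referred to in the abstract.)

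Part (b) is the heart of the matter, and is where the obstructions singled out in the introduction enter: two cells lying far apart in $\window$ can still be statistically dependent because they may be bounded by a common line, so the dependence is of infinite range and the available Poisson-approximation results for tessellations, and for $k$-tuples of a Poisson process, do not apply off the shelf. To run Chen--Stein I must show that $\sum_{C\neq C'}\PP(C\ \text{and}\ C'\ \text{both extremal})\to\lambda^2$ and that the self-interaction term vanishes. Expanding the indicator of each cell over the contact triple of its inball and applying the Mecke equation to the six lines that arise, I would organise the estimate according to the number $\ell\in\{0,1,2\}$ of lines common to the two triples and, when $\ell=0$, according to whether the two inballs lie within a distance that is large relative to their radii but $o(\rho^{1/2})$, or not. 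The $\ell=0$, well-separated contribution factorises and produces $\lambda^2$ in the limit; every other configuration has to be shown to be $o(1)$.

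The terms with $\ell\geq 1$ are the genuinely new difficulty. Here one conditions on the shared line(s) and bounds, uniformly in their location, the conditional probability that both inballs realise the extreme event while their interiors stay empty. These contributions are small because a line shared by the boundaries of two inballs removes one translational degree of freedom from the pair, replacing the $\rho^2$ volume of admissible pairs of incentres by at most $\rho^{3/2}$, and this is then outweighed by the smallness already present in the single-cell estimate: in the maximal regime the two void-probability factors $e^{-2\pi R(C)}$, $e^{-2\pi R(C')}$ are each of order $\rho^{-1}$, and in the minimal regime the two radius constraints $R(C),R(C')<v_\rho^-$ each cost a factor of order $v_\rho^-\asymp\rho^{-1}$, so that in both cases the net power of $\rho$ is negative. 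The residual coupling between distant cells that share a line is handled in the same spirit and only feeds the error term of the Chen--Stein bound. Higher factorial moments are treated by the same dissection applied to $j$ contact triples, indexed by the incidence pattern between the $3j$ lines and the $j$ inballs; only the totally separated patterns survive and contribute $\lambda^j$. Organising this combinatorics is presumably the purpose of the cell/line dependency-graph formalism suggested by the notation $\depgraph$, $\cells$, $\lines$. I expect the uniform shared-line estimates to be the most technical step of the argument, the boundary effects near $\partial\window$ being comparatively routine. Once the factorial moments are shown to converge to $\lambda^j$ for every $j$ --- $\lambda=t$ for $N_\rho^-$, $\lambda=e^{-t}$ for $N_\rho^+$ --- the method of moments for the Poisson distribution gives the two convergences in distribution, and the theorem follows.
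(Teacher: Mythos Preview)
Your approach for part~(ii) matches the paper's closely: the paper also uses the method of moments (Lemma~\ref{Le:henze}), checking that every factorial moment of the exceedance count converges to $\tau^K$. The configuration-graph formalism $\depgraph$ is used exactly as you anticipate, to index the incidence patterns between the contact lines and the $K$ inballs, and a clustering scheme (connected components based on proximity of incentres) handles the nearby-versus-far dichotomy. The crucial technical ingredient you do not name is a uniform lower bound on $\phi\bigl(\bigcup_i B_i\bigr)$ for a union of large disjoint balls (Proposition~\ref{Prop:uniformupperbound}), which simultaneously quantifies the near-independence of well-separated inballs and the extra cost incurred when two inballs are close; this is what drives the vanishing of all non-factorising patterns.

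For part~(i) the paper takes a genuinely different route. Rather than computing factorial moments of $N_\rho^-$ directly, it works with the $U$-statistic $\trmin_{\window}[r]$ --- the $r$-th smallest inradius over all \emph{triangles} formed by $3$-tuples of lines of $\XX$ with incentre in $\window$ --- and applies a theorem of Schulte and Th\"ale (Theorem~\ref{Th:ST}) as a black box. The hypotheses of that theorem are the convergence of a first-moment integral $\alpha_\rho(t)$ and the vanishing of two second-moment quantities $r_{\rho,1}(t)$, $r_{\rho,2}(t)$ that encode precisely your $\ell=1$ and $\ell=2$ shared-line configurations; a separate lemma (Lemma~\ref{Le:deviation}) then transfers the result from triangles to cells. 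Your unified factorial-moment programme should work for part~(i) as well and is in the same spirit, since the Schulte--Th\"ale conditions are themselves second-moment computations of the kind you outline; the paper simply gets part~(i) more cheaply by invoking their theorem rather than redoing the Poisson approximation from scratch.

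One caveat: the boundary effects you dismiss as routine are not entirely so for the minimum. The paper introduces an auxiliary window $\mathbf{W}_{q(\rho)}$ with $q(\rho)$ slightly larger than $\rho$ and restricts to triangles contained in it; the remark closing Section~\ref{sec:minins} states explicitly that this device is needed to control $r_{\rho,2}(t)$. The issue is that a triangle with tiny inradius can be arbitrarily elongated, so the intersection point $v(H_1,H_2)$ of two shared lines can lie far outside $\window$ even when the incentre does not, and without some localisation of $v(H_1,H_2)$ the change of variables used to bound the $\ell=2$ term produces a logarithmically divergent integral in the angle between the shared lines. Your heuristic degree-of-freedom count for $\ell\geq 1$ is correct at the level of powers of $\rho$, but turning it into a proof requires this extra step.
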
 
When $r=1$, the limit distributions are of type II and type III, so that
$m_{\window}[1]$ and $M_{\window}[1]$ belong to the domains of attraction of
Weibull and Gumbel distributions respectively. The techniques we employ to
investigate the asymptotic behaviours of $m_{\window}[r]$ and $M_{\window}[r]$
are quite different. For the cells minimising the inradius, we show that
asymptotically, $m_{\window}[r]$ has the same behaviour as the $r$-th smallest
value associated with a carefully chosen $U$-statistic. This will allow us to
apply the theorem in \citet{ST2}. The main difficulties we encounter will be in
checking the conditions for their theorem, and to deal with boundary effects.
The cells maximising the inradius are more delicate, since the random variables
in question cannot easily be formulated as a $U$-statistic. Our solution is to
use a Poisson approximation, with the method of moments,  in order to reduce our
investigation to \textit{finite} collections of cells. We then partition the
possible configurations of each finite set using a clustering scheme and
conditioning on the inter-cell distance.

\paragraph{The shape of cells with small inradius}
\label{subsec:triangle}
It was demonstrated that the cell which minimises the circumradius for a
Poisson-Voronoi tessellation is a triangle with high probability by
\citet{CC}. In the following theorem we demonstrate that the
analogous result holds for the cells of a Poisson line tessellation with small
inradius. We begin by observing that almost surely, there exists a unique cell
in $\mosaic$ with incentre in $\window$, say $C_{\window} [r]$, such that
$R(C_{\window} [r]) = m_{\window}[r]$. We then consider the random variable
$n(C_{\window} [r])$ where, for any (convex) polygon $P$ in $\RR^2$, we use
$n(P)$ to denote the number of vertices of $P$.
\begin{Th}
\label{Th:triangle} 
 Let $\mosaic$ be a stationary, isotropic Poisson line tessellation in
$\RR^2$ with intensity $\pi$ and let $r\geq 1$ be fixed, then
\begin{equation*}
    \PP
    \bigg(
        \bigcap_{1\leq k\leq r}
        \Big\{
            \,
            n(C_{\window} [k]) = 3
            \,
        \Big\}
    \bigg)
    \conv[\rho]{\infty}
    1.
\end{equation*}
\end{Th}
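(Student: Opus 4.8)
The plan is to show that with probability tending to $1$, each of the $r$ cells with smallest inradius is a triangle, by combining the Poisson/$U$-statistic description of $m_{\window}[r]$ used in Theorem~\ref{Th:maxins}\eqref{case:minins} with a direct geometric estimate on the probability that a cell of small inradius has four or more sides. First I would recall the mechanism behind part \eqref{case:minins}: the small-inradius cells are, asymptotically, in bijection with the small values of a $U$-statistic over triples (or small tuples) of lines of $\PHT$ — three lines generic enough to bound a small triangular cell contribute an inball of radius $\asymp (2\pi^2\rho)^{-1}$. The key quantitative input is that the expected number of cells $C$ with $z(C)\in\window$ and $R(C)\le (2\pi^2\rho)^{-1}t$ that are \emph{triangles} converges to a finite limit (of order $t^3$, consistent with the Poisson parameter in the theorem), whereas the expected number of such cells that are \emph{not} triangles is $o(1)$.

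The heart of the argument is therefore the estimate
\[
    \EEE{
        \sum_{\substack{C\in\mosaic,\\ z(C)\in\window}}
        \ind{R(C)\le (2\pi^2\rho)^{-1}t}\,\ind{n(C)\ge 4}
    }
    \conv[\rho]{\infty} 0 .
\]
I would obtain this via the Mecke/Slivnyak formula for the Poisson line process: a cell with $n(C)=k$ is delimited by $k$ lines, and the event that these $k$ lines bound a common cell with inradius at most $\varepsilon_\rho := (2\pi^2\rho)^{-1}t$ forces all $k$ lines to lie within distance $\asymp\varepsilon_\rho$ of a common point (the incentre), plus an emptiness condition on the remaining lines. Parametrising lines by $(u,s)\in\SSS\times\RR$, the $k$-fold integral over the intensity measure $\mu$ of the indicator that $k$ lines are simultaneously within $O(\varepsilon_\rho)$ of a point ranging over $\window$ scales like $\lambda_2(\window)\cdot \varepsilon_\rho^{\,k-1}$ (one power of $\varepsilon_\rho$ is "used up" localising the incentre relative to the window, or rather the $k$ signed distances range over an interval of length $O(\varepsilon_\rho)$ giving $\varepsilon_\rho^k$, times $\rho = \lambda_2(\window)$ up to constants). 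Since $\lambda_2(\window)=\rho$ and $\varepsilon_\rho\asymp\rho^{-1}$, the $k=3$ term is $\Theta(1)$ while every $k\ge 4$ term is $O(\rho\cdot\rho^{-(k-1)}) = O(\rho^{-(k-3)}) = o(1)$; summing the geometric tail over $k\ge 4$ still gives $o(1)$. One must also discard the degenerate sub-configurations (two of the bounding lines nearly parallel, or three nearly concurrent) where the incentre localisation fails — these contribute a further negligible factor of $\varepsilon_\rho$ and can be absorbed. Combined with part \eqref{case:minins}, which guarantees that $m_{\window}[r]\le (2\pi^2\rho)^{-1}t$ with probability close to $1$ for $t$ large but fixed, a union bound over $k=1,\dots,r$ and Markov's inequality on the displayed expectation then yield $\PP\big(\bigcup_{k\le r}\{n(C_{\window}[k])\ge 4\}\big)\to 0$, which is the claim.

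The main obstacle I anticipate is making the localisation estimate uniform and handling the boundary of $\window$: near $\partial\window$ a small cell may have its incentre inside the window while some bounding lines are "cut off", and the clean scaling $\lambda_2(\window)\varepsilon_\rho^{k}$ needs a careful treatment of the defining lines versus the emptiness region. I would handle this exactly as the boundary effects are handled in the proof of Theorem~\ref{Th:maxins}\eqref{case:minins} (the perimeter of $\window$ is $O(\rho^{1/2})$, so the boundary correction is lower order). A secondary technical point is ruling out $n(C)\ge 4$ configurations where the four lines are in "near-degenerate" position so that the inball is small for a reason other than the incentre being pinned by three of the lines; here one argues that a convex polygon with $n\ge 4$ sides and inradius $\varepsilon$ still has all its defining lines within $O(\varepsilon)$ of the incircle, which is what drives the extra power of $\varepsilon_\rho$, so no separate case analysis is really needed beyond bookkeeping. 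Once the expectation bound is in place, the rest is a short formal deduction.
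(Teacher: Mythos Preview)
Your overall architecture is right and matches the paper: reduce to showing that the expected number of non-triangular cells with $z(C)\in\window$ and $R(C)$ below the threshold tends to~$0$, then finish with Markov and a union bound over $k\le r$. The paper does exactly this (via Campbell's formula it rewrites the expectation as $\pi\rho\cdot\PP(R(\cell)<\rho^{-1+\varepsilon},\,n(\cell)\ge 4)$ and bounds the latter).

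The genuine gap is in your localisation claim. It is \emph{not} true that a convex polygon with $n\ge 4$ sides and inradius $\varepsilon$ has all its bounding lines within $O(\varepsilon)$ of the incircle: take a $2\varepsilon\times L$ rectangle with $L$ arbitrarily large --- the two short sides are at distance $L/2$ from the incentre. In the Poisson line tessellation the inball of a cell is (a.s.) tangent to exactly three of the bounding lines; any further side lies at distance $\ge\varepsilon$ from the incentre and is otherwise unconstrained. So the ``$k$ signed distances each in an interval of length $O(\varepsilon_\rho)$'' picture is wrong, and this is also why your side remark that the triangle count is ``of order $t^3$'' is off: by Blaschke--Petkantschin the triple of tangent lines is parametrised by $(z,r,u_{1:3})$ with Jacobian $a(u_{1:3})$, so the expected number of triples with $z\in\window$ and $r<\varepsilon_\rho$ scales like $\rho\cdot\varepsilon_\rho\sim t$, linearly, which is exactly the Poisson parameter in Theorem~\ref{Th:maxins}\eqref{case:minins}.

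The correct mechanism for the extra factor is the one the paper uses. Using the explicit representation \eqref{eq:explicitcell} of the typical cell, $n(\cell)\ge 4$ happens iff some line of $\XX$ meets $\triangle(u_{1:3},r)\setminus B(0,r)$; hence
\[
\pi\rho\cdot\PP\big(R(\cell)<\rho^{-1+\varepsilon},\,n(\cell)\ge 4\big)
= \frac{\rho}{24}\int_0^{\rho^{-1+\varepsilon}}\!\!\int_{\SSS^3}\big(1-e^{-\phi(\triangle(u_{1:3},r)\setminus B(0,r))}\big)e^{-2\pi r}a(u_{1:3})\,\sigma(du_{1:3})\,dr,
\]
and then $1-e^{-x}\le x$ together with $\phi(\triangle(u_{1:3},r))=r\,\ell(\triangle(u_{1:3}))$ gives the extra factor of $r$ and the bound $O(\rho^{-1+2\varepsilon})$. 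If you prefer a direct Mecke--Slivnyak argument over $4$-tuples $(H_{1:3},H_4)$ with $H_4$ hitting $B(z(H_{1:3}),\rho^{-1+\varepsilon})$ (or the triangle), that works too and is what the paper does in the proof of Lemma~\ref{Le:deviation}\eqref{Le:deviation1}; but either way the fourth line is controlled by the \emph{perimeter of the triangle}, not by proximity to the incircle, and the Jacobian $a(u_{1:3})$ is what keeps the integral over degenerate $(u_1,u_2,u_3)$ finite --- this is precisely the point you flag as ``can be absorbed'' but do not actually address.
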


\begin{Rk} The asymptotic behaviour for the area
of all triangular cells with a small area was given in  Corollary 2.7 in
\citet{ST}. Applying similar techniques to those which we use to obtain the
limit shape of the cells minimising the inradii, and using the fact
that 
\begin{equation*}
    \PP(\lambda_2(\cell)<v)
    \;
    \leq
    \;
    \PP\big(R(\cell)<(\pi^{-1}v)^{1/2}\big)
\end{equation*}
for all $v>0$, we can also prove that the cells with a small \emph{area} are
triangles with high probability. As mentioned in Remark~4 in \citet{ST} (where a
formal proof is not provided), this implies that Corollary~2.7 in \citet{ST}
makes a statement not only about the area of the smallest triangular cell, but
also about the area of the smallest cell in general.
\end{Rk}
\begin{Rk}
Our theorems are given specifically for the two dimensional case with a fixed
disc-shaped window, $\window$ in order to keep our calculations simple. However,
Theorem \ref{Th:maxins} remains true when the window is any convex body. We
believe that our results concerning the largest order statistics may be extended
into higher dimensions and more general anisotropic (stationary) Poisson
processes, using standard arguments. For the case of the smallest order
statistics, these generalisations become less evident, and may require
alternative arguments in places.
\end{Rk}

\subsection{Layout}
In Section~\ref{Sec:notation}, we shall introduce
the general notation and background which will be required throughout the rest
of the paper. In Section~\ref{sec:minins}, we provide the asymptotic behaviour
of $m_{\window}[r]$, proving the first part of Theorem~\ref{Th:maxins}
and Theorem~\ref{Th:triangle}. In Section~\ref{sec:technicallemmas}, we establish
some technical lemmas which will be used to derive the asymptotic behaviour of
$M_{\window}[r]$. We conclude in Section~\ref{sec:maxins} by providing
the asymptotic behaviour of $M_{\window}[r]$,  finalising the proof
of Theorem~\ref{Th:maxins}.

\section{Preliminaries}
\subsection*{Notation}
\label{Sec:notation}
\begin{itemize}
\item We shall use $\Po(\tau)$ as a place-holder for a Poisson random variable
with mean $\tau>0$.
\item For any pair of
functions $f,g\colon\RR\to\RR$, we write $f(\rho)\eq[\rho]{\infty}g(\rho)$ and
$f(\rho) = O(g(\rho))$ to respectively mean that $f(\rho)/g(\rho)
\rightarrow 1$ as $\rho \rightarrow \infty$ and $f(\rho)/g(\rho)$ is bounded
for $\rho$ large enough.
\item By $\borel(\RR^2)$ we mean the family of Borel subsets in $\RR^2$.
\item For any $\cE \in\borel(\RR^2)$ and any $x\in\RR^2$, we write $x +
\cE:=\{x+y: y\in \cE\}$ and $d(x,\cE):=\inf_{y\in \cE}|x-y|$.
\item Let $E$ be a measurable set and $K\geq 1$. 
\begin{itemize}
\item For any $K$-tuple of points $x_1,\ldots, x_K\in E$, we write
$x_{1:K}:=(x_1,\ldots, x_K)$.
\item By $E_{\neq}^K$, we mean the set of $K$-tuples of points $x_{1:K}$ such
that $x_i\neq x_j$ for all $1\leq i\neq j\leq K$.
\item For any function $f\colon E\rightarrow F$, where $F$ is a set, and for any
$A\subset F$, we write $f(x_{1:K})\in A$ to imply that $f(x_i)\in A$ for each
$1\leq i\leq K$. In the same spirit,  $f(x_{1:K})>v$ will be used to mean that
$f(x_i)>v$ given $v\in\RR$.
\item If $\nu$ is a measure on $E$, we write $\nu(dx_{1:K}):=\nu(dx_1)\cdots \nu
(dx_K)$.
\end{itemize}
\item Given three lines $H_{1:3}\in \affinelines_{\neq}^3$ in general position
(in the sense of \citet{SW}, p128), we denote by $\convex(H_{1:3})$ the unique
triangle that can be formed by the intersection of the halfspaces induced by the
lines $H_1$, $H_2$ and $H_3$. In the same spirit, we denote by $B(H_{1:3})$,
$R(H_{1:3})$ and $z(H_{1:3})$ the inball, the inradius and the incentre of
$\convex(H_{1:3})$ respectively.
\item Let $K\in \cK$ be a convex body with a unique inball $B(K)$ such that the
intersection $B(K)\cap K$ contains exactly three points, $x_1, x_2 ,x_3$. In
which case we define $T_1, T_2, T_3$ to be the lines tangent to the border of
$B(K)$ intersecting $x_1, x_2, x_3$ respectively. We now define $\triangle(K) :=
\triangle(T_{1:3})$, observing that $B(\triangle(K)) = B(K)$.
\item For any line $H\in\affinelines$, we write $H^+$ to denote the half-plane
delimited by $H$ and containing $0\in\RR^2$. According to \eqref{def:Hut}, we have
$H^+(u,t) := \{\,x\in \RR^2 :  \langle x,u \rangle \leq  t \,\}$ for given $t>0$
and $u\in\SSS$.
\item For any $\cE\in\borel(\RR^2)$, we take
$\affinelines(\cE)\subset\affinelines$, to be the set $\affinelines(\cE):=\{H\in
\affinelines : H\cap \cE\neq\varnothing\}$. We also define
$\phi\colon\borel(\RR^2)\to \RR_+$ as
\begin{equation}\label{defphi}
    \phi(\cE)
    \;
    :=
    \; 
    \mu(\affinelines(\cE)) 
    \;
    =
    \; 
    \int_{\affinelines(\cE)}
    \ind{H\cap \cE\neq\varnothing}\mu(dH) 
    \;
    =
    \;
    \EEE{\#\{H\in\XX: H\cap \cE\neq \varnothing\}}.
\end{equation}
\end{itemize}

\begin{Rk}
Because $\XX$ is a Poisson process, we have for any $\cE\in\borel(\RR^2)$ 
\begin{equation}\label{Rk:phi}
    \PPP{\XX\cap \cE =\varnothing} 
    =
    \PPP{\#\XX\cap \affinelines(\cE) = 0}
    = 
    e^{ - \phi(\cE)}.
\end{equation}
\end{Rk}

\begin{Rk}
When $\cE\in\borel(\RR^2)$ is a convex body, the Crofton formula 
(Theorem~5.1.1 in~\citet{SW}) gives that
\begin{equation}\label{eq:Crofton}
    \phi(\cE) 
    =
    \ell(\cE),
\end{equation} 
where $\ell(\cE)$ denotes the perimeter of $\cE$. In particular, when
$\cE=B(z,r)$ for some $z\in\RR^2$ and $r\ge 0$, we have $
\linesintersecting(B(z,r)) =
\mu\left(\affinelines(B(z,r))\right) = 2 \pi r.$
\end{Rk}

\paragraph*{A well-known representation of the typical cell} The typical cell of
a Poisson line tessellation, as defined in \eqref{campbell}, can be made
explicit in the following sense. For any measurable function
$f\colon\cK\to\RR$, we have from Theorem 10.4.6 in \citet{SW} that
\begin{equation}\label{eq:explicitcell}
    \EEE{f(\cell)} 
    = 
    \frac{1}{24\pi}
    \int_0^\infty
    \int_{\SSS^3}
    \EEE{
        f
        \left(
            C
            \left(
                \XX,u_{1:3}, r 
            \right)
        \right)
    }
    e^{-2\pi r}
    a(u_{1:3})
    \spheremeasure(du_{1:3})
    dr,
\end{equation}
where 
\begin{equation}\label{def:typicalcellintegral}
    C 
    \left(
        \XX,u_{1:3},r 
    \right)
    :=
    \bigcap_{
        H \in \XX \cap 
        \left(
            \affinelines(B(0,r))
        \right)^\text{c}
    }
    \bigg\{
        \,
        H^+\cap \bigcap_{j=1}^3H^+(u_j,r)
        \,
    \bigg\}
\end{equation} 
and where $a(u_{1:3})$ is taken to be the area of the convex hull of $\{u_1,
u_2, u_3\}\subset\SSS$ when $0\in \RR^2$ is contained in the convex hull of
$\{u_1,u_2,u_3\}$ and 0 otherwise. With standard computations, it may be
demonstrated that $\int_{\SSS^3}a(u_{1:3})\spheremeasure(du_{1:3})=48\pi^2$, so
that when $f(C)=R(C)$, we have the well-known result
\begin{equation}\label{eq:typicalinradius}
    \PP(R(\cell)\leq v) 
    =
    1-e^{-2\pi v} \qquad \text{for all} \; v\geq 0.
\end{equation} 

We note that in the following, we occasionally omit the lower bounds in the
ranges of sums and unions, and the arguments of functions when they are clear
from context. Throughout the paper we also use $c$ to signify a universal
positive constant not depending on $\rho$ but which may depend on other
quantities. When required, we assume that $\rho$ is sufficiently large.

\section{Asymptotics for cells with small inradii}
\label{sec:minins} 
\subsection{Intermediary results}
\label{subsec:intermediary}
Let $r\geq 1$ be fixed. In order to avoid boundary effects, we introduce a
function $q(\rho)$ such that 
\begin{equation} \label{def:q}
    \log \rho
    \cdot
    q(\rho)
    \cdot
    \rho^{-2}
    \conv[\rho]{\infty}0 
    \qquad 
    \text{and}
    \qquad
    \pi^{-1/2}
    \left(
        q(\rho)^{1/2}-\rho^{1/2} 
    \right) 
    -
    \varepsilon 
    \log\rho 
    \conv[\rho]{\infty}+\infty
\end{equation} for some $\varepsilon>0$. We also introduce two intermediary
random variables, the first of which relates collections of 3-tuples of
lines in $\XX$. Let $\trmin_{\window}[r]$ represent the $r$-th smallest value of
$R(H_{1:3})$ over all $3$-tuples of lines $H_{1:3}\in\XX_{\neq}^3$ such that
$z(H_{1:3})\in\window$ and $\triangle(H_{1:3})\subset\mathbf{W}_{q(\rho)}$. Its
asymptotic behaviour is given in the following proposition.
\begin{Prop}
\label{Prop:mintriangle} For any $r\geq 1$ and any $t\geq 0$,
\begin{equation*}
    \PP
    \Big(
        \trmin_{\window}[r] 
        \geq (2\pi^2\rho)^{-1}t
    \Big) 
    \conv[\rho]{\infty}
    e^{-t}\sum_{k=0}^{r-1}\frac{t^k}{k!}.   
\end{equation*}
\end{Prop}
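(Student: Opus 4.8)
The plan is to recognise $\trmin_{\window}[r]$ as (essentially) the $r$-th smallest order statistic of a $U$-statistic of order $3$ built on the Poisson line process $\XX$, and then to apply the Poisson-approximation theorem of \citet{ST2}. Concretely, for a threshold $v=v(\rho):=(2\pi^2\rho)^{-1}t$, I would introduce the functional
\begin{equation*}
    f_\rho(H_{1:3})
    :=
    \ind{R(H_{1:3})\le v}
    \cdot
    \ind{z(H_{1:3})\in\window}
    \cdot
    \ind{\triangle(H_{1:3})\subset\mathbf{W}_{q(\rho)}}
\end{equation*}
on $3$-tuples of distinct lines, and let $W_\rho:=\tfrac13\sum_{H_{1:3}\in\XX_{\neq}^3} f_\rho(H_{1:3})$ be the number of admissible triangles with inradius at most $v$. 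The event $\{\trmin_{\window}[r]\ge v\}$ is exactly $\{W_\rho\le r-1\}$ up to a negligible event (a cell realising a small inradius has its incentre pinned by exactly three lines almost surely, and by the constraint $\triangle(H_{1:3})\subset\mathbf{W}_{q(\rho)}$ the relevant triangles are genuine cells of $\mosaic$; I will need a short argument, analogous to the passage from $\trmin$ to $\cellmin$ sketched later in the section, that the boundary layer between $\window$ and $\mathbf{W}_{q(\rho)}$ contributes nothing in the limit, using the first condition in \eqref{def:q}). So it suffices to show $W_\rho$ converges in distribution to a Poisson random variable with parameter $t$, since $\PP(\Po(t)\le r-1)=e^{-t}\sum_{k=0}^{r-1}t^k/k!$.

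For the Poisson limit I would invoke Theorem~1.1 (or the order-statistics corollary) of \citet{ST2}, whose hypotheses require: (a) convergence of the mean $\EE[W_\rho]\to t$; and (b) vanishing of the "overlap" terms, i.e. the sums over pairs of $3$-tuples sharing $1$ or $2$ lines of the corresponding product moments. For (a), by the multivariate Mecke formula,
\begin{equation*}
    \EE[W_\rho]
    =
    \frac{1}{3!}
    \int_{\affinelines^3}
    \ind{R(H_{1:3})\le v}\,
    \ind{z(H_{1:3})\in\window}\,
    \ind{\triangle(H_{1:3})\subset\mathbf{W}_{q(\rho)}}\,
    \mu(dH_{1:3}),
\end{equation*}
and I would evaluate this using the same change of variables underlying \eqref{eq:explicitcell}: parametrise a triple of lines in general position by the inradius $r$ of their triangle, the incentre $z$, and the three contact directions $u_{1:3}\in\SSS^3$, with Jacobian proportional to $a(u_{1:3})$. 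The $z$-integral gives $\lambda_2(\window)=\rho$ (the $\mathbf{W}_{q(\rho)}$ constraint being asymptotically free), the $u$-integral gives the constant $48\pi^2$ as recalled after \eqref{eq:explicitcell} — wait, more precisely it will combine with the $1/(3!)$ and the normalisation in a way that reproduces the density $2\pi e^{-2\pi r}$ of \eqref{eq:typicalinradius} times $\pi\rho$ — and the $r$-integral over $[0,v]$ of $2\pi\cdot(\pi\rho)$ times (the small-$r$ density) yields $\EE[W_\rho]\sim \pi\rho\cdot 2\pi v = \pi\rho\cdot 2\pi\cdot(2\pi^2\rho)^{-1}t = t$, as desired. (Here I use $1-e^{-2\pi v}\sim 2\pi v$ and the fact that the typical-cell inradius density at $0$ is $2\pi$.)

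The main obstacle is condition (b), the control of the clustering/overlap terms, which is exactly where the paper flags that "even cells which are arbitrarily spatially separated may share lines." I would split the bad pairs according to whether two triples share one line or two lines. If $H_{1:3}$ and $H_{1:3}'$ share two lines, their triangles share an edge direction and the two incentres are within $O(v)$ of a common line; integrating out shows this contributes $O(\rho\cdot v^2\cdot\text{something}\to 0)$ after using that both inradii are $\le v$. The genuinely delicate case is one shared line $H_1=H_1'$: here the two triangles can be far apart along $H_1$, so one does \emph{not} get spatial localisation for free. The remedy is to use the constraint $\triangle(H_{1:3})\subset\mathbf{W}_{q(\rho)}$: both triangles lie in a disc of area $q(\rho)$, hence their incentres lie in a ball of radius $O(q(\rho)^{1/2})$, and the shared line must meet both small balls $B(H_{1:3})$ and $B(H_{1:3}')$. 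Integrating the remaining four free lines each contributes a factor $O(v)$ (a line must hit a fixed ball of radius $\le v$, Crofton giving measure $2\pi v$), the incentres contribute $O(q(\rho))$, and the shared line contributes $O(q(\rho)^{1/2})$; balancing powers with $v\asymp \rho^{-1}$ and using the first condition in \eqref{def:q}, $\log\rho\cdot q(\rho)\cdot\rho^{-2}\to0$ (which in particular forces $q(\rho)=o(\rho^2)$, so $q(\rho)^{3/2}v^4=o(\rho^{-1})\to0$ and likewise $q(\rho)^{1/2}\cdot q(\rho)\cdot v^4=q(\rho)^{3/2}\rho^{-4}\to0$), one sees this term vanishes. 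Assembling the mean asymptotics and the overlap bounds and feeding them into the \citet{ST2} theorem gives $W_\rho\Rightarrow\Po(t)$, and hence the claimed limit for $\PP(\trmin_{\window}[r]\ge(2\pi^2\rho)^{-1}t)$. $\square$
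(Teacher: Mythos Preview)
Your overall strategy---recognising $\trmin_{\window}[r]$ as an order statistic of a $U$-statistic of order $3$ on $\XX$ and applying the Schulte--Th\"ale theorem---matches the paper exactly, and your mean computation is correct (the paper obtains $\alpha_\rho(t)\to 2\pi^2 t$ via Blaschke--Petkantschin, which is your calculation rescaled).

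However, you have the two overlap cases \emph{backwards}. In the paper's notation these are $r_{\rho,1}(t)$ (one shared line) and $r_{\rho,2}(t)$ (two shared lines). The one-shared-line term is the \emph{easy} one: with $H_1$ fixed, the remaining pair $(H_2,H_3)$ must produce a triangle with incentre in $\window$ and inradius $\le\rho^{-1}t$, and a Blaschke--Petkantschin-type bound (Lemma~\ref{Le:boundint}, Part~\eqref{eq:boundint1}) gives $G_\rho(H_1)=O(\rho^{-1/2})$, whence $r_{\rho,1}(t)=O(\rho^{-1/2})$. No use of $q(\rho)$ is needed; your worry that ``the two triangles can be far apart along $H_1$'' is harmless because both incentres are already constrained to $\window$.

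The genuine difficulty is the two-shared-lines term $r_{\rho,2}(t)$, and your dismissal of it is a gap. When $H_1,H_2$ meet at a small half-angle $\theta$, the incentre of any triangle through them with inradius $\le v$ sits at distance $\sim v/\sin\theta$ from $H_1\cap H_2$, so the third line $H_3$ ranges over a set of $\mu$-measure $\sim v/\sin\theta$; squaring and integrating over angles produces a divergent $\int d\theta/\sin^2\theta$ that your ``$O(\rho\cdot v^2\cdot\text{something})$'' hides. The paper handles this by splitting on $\sin\theta\lessgtr\rho^{-3/2}$ and, crucially, by using $\triangle(H_{1:3})\subset\mathbf{W}_{q(\rho)}$ to force $|v(H_{1:2})|\le c\,q(\rho)^{1/2}$ (Lemma~\ref{Le:rho2}); after a change of variables to the intersection point and the angles one obtains $r_{\rho,2}(t)=O(\log\rho\cdot q(\rho)\cdot\rho^{-2})$, and \emph{this} is precisely why the first condition in \eqref{def:q} appears.

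A minor point: Proposition~\ref{Prop:mintriangle} concerns only $\trmin$, the order statistic over \emph{all} admissible triangles of lines, with no reference to whether they are cells of $\mosaic$. Your parenthetical about triangles being ``genuine cells'' and about passage to $\cellmin$ does not belong here; that is the content of Lemma~\ref{Le:deviation}.
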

The second random variable concerns the cells in $\mosaic$. More precisely, we
define $\cellmin_{\window}[r]$ to be the $r$-th smallest value of the inradius
over all cells $C\in\mosaic$ such that $z(C)\in\window$ and $\triangle(C)\subset
\mathbf{W}_{q(\rho)}$. We observe that
$\cellmin_{\window}[r]\geq\trmin_{\window}[r]$ and $\cellmin_{\window}[r]\geq
m_{\window}[r]$. Actually, in the following result we show that the deviation
between these quantities is negligible as $\rho$ goes to infinity.
\begin{Le}
\label{Le:deviation} For any fixed $r\geq 1$, 
\begin{enumerate}[(i)]
\item \label{Le:deviation1}
    $\PPP{\cellmin_{\window}[r] \neq \trmin_{\window}[r]}\conv[\rho]{\infty}0$,
\item \label{Le:deviation3}
    $\PPP{m_{\window}[r] \neq \cellmin_{\window}[r]}\conv[\rho]{\infty}0$.
\end{enumerate}
\end{Le}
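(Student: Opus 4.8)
The argument hinges on one deterministic fact. Almost surely every cell $C\in\mosaic$ is a bounded convex polygon whose inball $B(C)$ meets $\partial C$ in exactly three points, each lying in the relative interior of an edge carried by a line of $\XX$, and these three lines $H_1(C),H_2(C),H_3(C)$ are distinct; since each $H_i(C)$ is tangent to $B(C)$ at the corresponding contact point it plays the role of $T_i$ in the definition of $\triangle(C)$, so $\triangle(C)=\triangle(H_{1:3}(C))$, $R(C)=R(H_{1:3}(C))$ and $z(C)=z(H_{1:3}(C))$. Two cells carrying the same triple share an incentre, hence coincide (interiors of cells are disjoint), so $C\mapsto H_{1:3}(C)$ is injective; therefore $\{R(C):C\in\mosaic,\,z(C)\in\window,\,\triangle(C)\subset\WW_{q(\rho)}\}$ is contained, as a multiset, in $\{R(H_{1:3}):H_{1:3}\in\XX_{\neq}^3,\,z(H_{1:3})\in\window,\,\triangle(H_{1:3})\subset\WW_{q(\rho)}\}$, which re-proves $\cellmin_{\window}[r]\geq\trmin_{\window}[r]$, and likewise $\cellmin_{\window}[r]\geq m_{\window}[r]$.

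For (i), fix $s>0$ and put $\epsilon=\epsilon_\rho(s):=(2\pi^2\rho)^{-1}s$. Let $\mathcal{A}_\rho(s)$ be the event that some triple $H_{1:3}\in\XX_{\neq}^3$ with $z(H_{1:3})\in\window$, $\triangle(H_{1:3})\subset\WW_{q(\rho)}$ and $R(H_{1:3})\leq\epsilon$ admits a line of $\XX\setminus\{H_1,H_2,H_3\}$ meeting the open inball $B(H_{1:3})$. On $\mathcal{A}_\rho(s)^{\mathrm{c}}$, for each such triple the cell $C_0$ containing $z(H_{1:3})$ satisfies $B(H_{1:3})\subset C_0\subset\triangle(H_{1:3})$; this pins down its inradius and, by uniqueness of the inball, forces $B(C_0)=B(H_{1:3})$, so the deterministic fact applied to $C_0$ gives $\triangle(C_0)=\triangle(H_{1:3})$ and $R(C_0)=R(H_{1:3})$, i.e. $R(H_{1:3})$ lies in the $\cellmin$-collection. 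Combined with the multiset containment this yields $\{\trmin_{\window}[r]\leq\epsilon\}\cap\mathcal{A}_\rho(s)^{\mathrm{c}}\subset\{\cellmin_{\window}[r]=\trmin_{\window}[r]\}$. To bound $\PP(\mathcal{A}_\rho(s))$, apply the multivariate Mecke equation after dominating the indicator that a fourth line exists by the number of such lines: the expected count of offending triples is, up to a combinatorial constant, $\int\ind{z(H_{1:3})\in\window,\,R(H_{1:3})\leq\epsilon}\,\linesintersecting(B(H_{1:3}))\,\mu(dH_{1:3})$, and since $\linesintersecting(B(H_{1:3}))=2\pi R(H_{1:3})\leq2\pi\epsilon$ this is at most $2\pi\epsilon\cdot\EE[\#\{H_{1:3}\in\XX_{\neq}^3:\,z(H_{1:3})\in\window,\,R(H_{1:3})\leq\epsilon\}]$; the last expectation stays bounded in $\rho$ (it is in fact $\eq[\rho]{\infty}s$) by the first-moment computation used to prove Proposition~\ref{Prop:mintriangle}, so $\PP(\mathcal{A}_\rho(s))=O(\epsilon)\to0$. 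Since also $\PP(\trmin_{\window}[r]>\epsilon)\to e^{-s}\sum_{k=0}^{r-1}s^k/k!$ by Proposition~\ref{Prop:mintriangle}, letting $\rho\to\infty$ and then $s\to\infty$ in $\PP(\cellmin_{\window}[r]\neq\trmin_{\window}[r])\leq\PP(\mathcal{A}_\rho(s))+\PP(\trmin_{\window}[r]>\epsilon)$ proves (i).

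For (ii), fix $s>0$ and $\epsilon=(2\pi^2\rho)^{-1}s$. Passing from the cells with $z(C)\in\window$ to those also satisfying $\triangle(C)\subset\WW_{q(\rho)}$ can change the $r$-th smallest inradius only if some cell $C^\star$ with $z(C^\star)\in\window$ and $R(C^\star)\leq m_{\window}[r]$ has $\triangle(C^\star)\not\subset\WW_{q(\rho)}$; by the deterministic fact its triple $H_{1:3}(C^\star)$ then witnesses $\mathcal{B}_\rho(s):=\{\exists\,H_{1:3}\in\XX_{\neq}^3:\,z(H_{1:3})\in\window,\,R(H_{1:3})\leq\epsilon,\,\triangle(H_{1:3})\not\subset\WW_{q(\rho)}\}$. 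Hence $\PP(m_{\window}[r]\neq\cellmin_{\window}[r])\leq\PP(m_{\window}[r]>\epsilon)+\PP(\mathcal{B}_\rho(s))$. For the second term, parametrise a triangle by its incentre $z$, inradius $R$, and the angles $\delta_1,\delta_2,\delta_3>0$ (with $\delta_1+\delta_2+\delta_3=2\pi$) subtended at $z$ by consecutive contact points; the vertex opposite the $i$-th side sits at distance $R/\cos(\delta_i/2)$ from $z$. If $z\in\window$ but some vertex escapes $\WW_{q(\rho)}$, that distance exceeds $L_\rho:=\pi^{-1/2}q(\rho)^{1/2}-\pi^{-1/2}\rho^{1/2}$, which forces one $\delta_i$ into an interval of length $O(R/L_\rho)=O(\epsilon/L_\rho)$ around $\pi$; inserting this extra constraint into the Mecke computation of (i) shows the expected number of triples witnessing $\mathcal{B}_\rho(s)$ is $O(\epsilon/L_\rho)$ times the $O(s)$ count above, hence $\to0$ because $L_\rho\to\infty$ by the second requirement in \eqref{def:q}, so $\PP(\mathcal{B}_\rho(s))\to0$. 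For the first term, part (i) and Proposition~\ref{Prop:mintriangle} give $\limsup_\rho\PP(m_{\window}[r]>\epsilon)\leq\limsup_\rho\PP(\cellmin_{\window}[r]>\epsilon)=e^{-s}\sum_{k=0}^{r-1}s^k/k!$, which tends to $0$ as $s\to\infty$; letting $\rho\to\infty$ and then $s\to\infty$ completes the proof.

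The step I expect to be the genuine obstacle is the geometric estimate in (ii): thin, elongated triangles of tiny inradius but with a vertex far outside $\WW_{q(\rho)}$ are not a priori negligible in the $\mu^3$-measure, and one must check that the angular gain $O(\epsilon/L_\rho)$ dominates the $O(s)$ count of small triangles --- which is precisely where the calibration of $q(\rho)$ in \eqref{def:q} enters. The remaining ingredients (the almost-sure structural facts about cell inballs, the null sets hidden in ``general position'', well-definedness of the order statistics, and the bookkeeping of ordered versus unordered triples and their multiplicities) are routine.
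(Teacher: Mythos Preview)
Your argument is correct. Part~(i) follows the paper's route almost verbatim: split on a small-inradius threshold, bound the probability that an extra line hits a small inball via Mecke--Slivnyak and Blaschke--Petkantschin, and use Proposition~\ref{Prop:mintriangle} for the complementary event. The only cosmetic difference is that you use a threshold $(2\pi^2\rho)^{-1}s$ with $s\to\infty$ afterwards, whereas the paper takes $\rho^{-1+\varepsilon}$ with fixed $\varepsilon<\tfrac12$.

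Part~(ii), however, genuinely diverges from the paper. The paper first invokes Theorem~\ref{Th:triangle} to reduce to \emph{triangular} cells, and then exploits the resulting emptiness of $\triangle(H_{1:3})$: via Mecke--Slivnyak this produces a factor $e^{-\ell(\triangle(H_{1:3}))}$, and since any triangle with incentre in $\window$ and a vertex outside $\WW_{q(\rho)}$ has perimeter at least $\pi^{-1/2}(q(\rho)^{1/2}-\rho^{1/2})$, one obtains exponential decay in that gap --- which is exactly why the second half of~\eqref{def:q} demands $L_\rho-\varepsilon\log\rho\to\infty$. Your approach bypasses Theorem~\ref{Th:triangle} entirely: you bound directly the $\mu^{3}$-measure of triples $H_{1:3}$ with $z(H_{1:3})\in\window$, $R(H_{1:3})\le\epsilon$ and $\triangle(H_{1:3})\not\subset\WW_{q(\rho)}$ by the angular observation that a far-away vertex forces one contact-angle $\delta_i$ into an interval of length $O(\epsilon/L_\rho)$, giving a count $O(s^2/(\rho L_\rho))$. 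This is more elementary, avoids the dependence on Theorem~\ref{Th:triangle}, and in fact only needs $L_\rho\to\infty$ rather than the stronger growth in~\eqref{def:q}. The paper's route, on the other hand, is natural once Theorem~\ref{Th:triangle} is available anyway and makes the role of the ``empty triangle'' more transparent. Your closing remark that the calibration of $q(\rho)$ is the crux is therefore slightly pessimistic for your own argument: it is the paper's proof that really leans on the full strength of~\eqref{def:q}.
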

\label{subsec:proof}
As stated above, Schulte and Th\"ale  established a general theorem to deal with
$U$-statistcs (Theorem 1.1 in \citet{ST}). In this work we make use of a new
version of their theorem (to appear in \citet{ST2}), which we modify slightly to
suit our requirements. Let $g\colon\affinelines^3\to\RR$ be a measurable
symmetric function and take $\trmin_{g,\window}[r]$ to be the $r$-th smallest
value of $g(H_{1:3})$ over all $3$-tuples of lines $H_{1:3}\in\XX_{\neq}^3$ such
that $z(H_{1:3})\in\window$ and $\triangle(H_{1:})\subset\mathbf{W}_{q(\rho)}$
(for $q(\rho)$ as in \eqref{def:q}.) We now define the
following quantities for given $a,t\geq 0$.
\begin{subequations}
\begin{equation} \label{defalpha}
    \alpha_\rho^{(g)}(t)
    :=
    \frac{1}{6}\int_{\affinelines^3} 
    \ind{z(H_{1:3})\in\window}
    \ind{\triangle(H_{1:3})\subset\mathbf{W}_{q(\rho)}}
    \ind{g(H_{1:3})<\rho^{-a}t}
    \mu(dH_{1:3}),
\end{equation}
\begin{equation} \label{defr1} 
    r_{\rho,1}^{(g)}(t)
    :=
    \int_{\affinelines}
    \left(
        \int_{\affinelines^2}
        \ind{z(H_{1:3})\in\window}
        \ind{\triangle(H_{1:3})\subset\mathbf{W}_{q(\rho)}}
        \ind{g(H_{1:3})<\rho^{-a}t}
        \measure(dH_{2:3})
    \right)^2
    \measure(dH_1), 
\end{equation}
\begin{equation} \label{defr2}
    r_{\rho,2}^{(g)}(t)
    :=
    \int_{\affinelines^2}
    \left(
        \int_{\affinelines}
        \ind{z(H_{1:3})\in\window}
        \ind{\triangle(H_{1:3})\subset\mathbf{W}_{q(\rho)}}
        \ind{g(H_{1:3})<\rho^{-a}t}
        \measure(dH_3)
\right)^2\measure(dH_{1:2}). 
\end{equation}
\end{subequations}  
\begin{Th}[Schulte and Th\"ale]\label{Th:ST}
Let $t\geq 0$ be fixed. Assume that $\alpha_\rho(t)$ converges to
$\alpha t^\beta >0 $, for some $\alpha,\beta>0$ and $r_{\rho,1}(t),
r_{\rho,2}(t)\conv[\rho]{\infty}0$, then
\[\PPP{\trmin^{(g)}_{\window}[r] \geq \rho^{-a}t}  \conv[\rho]{\infty}
e^{-\alpha t^\beta}\sum_{k=0}^{r-1}\frac{\left(\alpha t^\beta\right)^k}{k!}.\]
\end{Th}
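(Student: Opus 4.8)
The plan is to recognise the statement as a Poisson limit theorem for a counting random variable, and then to read off the distribution of the $r$-th order statistic. Fix $t>0$ and set
\[
  N_\rho
  \;:=\;
  \#\Big\{\,
    \{H_1,H_2,H_3\}\subset\XX
    \;:\;
    z(H_{1:3})\in\window,\;
    \triangle(H_{1:3})\subset\mathbf{W}_{q(\rho)},\;
    g(H_{1:3})<\rho^{-a}t
  \,\Big\},
\]
the number of admissible unordered triples of lines of $\XX$ with small $g$-value. By the multivariate Mecke equation, $\EEE{N_\rho}=\alpha_\rho^{(g)}(t)$, which by hypothesis converges to $\lambda:=\alpha t^\beta>0$. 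Ordering the values $g(H_{1:3})$ over all admissible triples, the $r$-th smallest of them is $\geq\rho^{-a}t$ if and only if strictly fewer than $r$ of them lie below $\rho^{-a}t$ --- this identity persisting in the degenerate case of fewer than $r$ admissible triples, in which $\trmin^{(g)}_{\window}[r]=+\infty$ by convention. Hence
\[
  \PPP{\trmin^{(g)}_{\window}[r]\geq\rho^{-a}t}
  \;=\;
  \PPP{N_\rho\leq r-1},
\]
and it suffices to prove that $N_\rho$ converges in distribution to $\Po(\lambda)$ as $\rho\to\infty$, for then evaluating the Poisson distribution function at $r-1$ yields exactly $e^{-\lambda}\sum_{k=0}^{r-1}\lambda^k/k!$.

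To establish this convergence I would use the method of moments: since the Poisson law is moment-determinate, it is enough to show that the factorial moments converge, i.e.\ $\EEE{N_\rho(N_\rho-1)\cdots(N_\rho-m+1)}\conv[\rho]{\infty}\lambda^m$ for every fixed $m\geq1$. Expanding this falling factorial as a sum over ordered $m$-tuples of pairwise distinct admissible triples and applying the multivariate Mecke equation, I would split the resulting integral according to the overlap pattern of the $m$ triples --- that is, according to the number $\ell\leq 3m$ of distinct lines involved and to which lines are shared between triples. In the ``diagonal-free'' pattern $\ell=3m$, the $m$ triples are pairwise disjoint, the integral factorises into $m$ independent copies of the mean integral, and the combinatorial prefactors ($\tfrac{(3m)!}{(3!)^m}$ ways of splitting $3m$ points into $m$ unordered triples, the Mecke normalisation $\tfrac1{(3m)!}$, and the $6^m$ implicit in $\alpha_\rho^{(g)}(t)=\tfrac16\int\cdots$) combine to give exactly $\alpha_\rho^{(g)}(t)^m\to\lambda^m$.

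The substance of the proof is to show that every remaining term, namely one in which at least two of the triples share at least one line, is $o(1)$. If two triples share exactly one line, then, after discarding the geometric constraints carried by the other $m-2$ triples and bounding their contribution by a power of the bounded quantity $\alpha_\rho^{(g)}(t)$, the corresponding integral has exactly the form of $r_{\rho,1}^{(g)}(t)$ in \eqref{defr1}; if they share exactly two lines, it has the form of $r_{\rho,2}^{(g)}(t)$ in \eqref{defr2}. For a general overlap pattern I would apply the Cauchy--Schwarz inequality iteratively on the shared lines, each step extracting a factor $r_{\rho,1}^{(g)}(t)^{1/2}$ or $r_{\rho,2}^{(g)}(t)^{1/2}$ and leaving integrals that are controlled by powers of $\alpha_\rho^{(g)}(t)$. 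Since $\alpha_\rho^{(g)}(t)$ stays bounded while $r_{\rho,1}^{(g)}(t),r_{\rho,2}^{(g)}(t)\to 0$, this finite (for each fixed $m$) collection of overlap terms vanishes, which completes the moment computation. This combinatorial bookkeeping together with the iterated Cauchy--Schwarz estimates is the step I expect to be the main obstacle; an alternative, which in addition furnishes an explicit total-variation rate, is to obtain $N_\rho\Rightarrow\Po(\lambda)$ in a single stroke via the Chen--Stein (equivalently, Malliavin--Stein) method for Poisson approximation of Poisson $U$-statistics, whose error bounds are again governed precisely by $\alpha_\rho^{(g)}(t)$, $r_{\rho,1}^{(g)}(t)$ and $r_{\rho,2}^{(g)}(t)$ --- the route followed in \citet{ST2}.
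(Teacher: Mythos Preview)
The paper does not prove Theorem~\ref{Th:ST}: it is stated as a result of Schulte and Th\"ale and simply imported from \citet{ST2}, with the remark following the statement explaining how the original formulation has been rescaled and adapted. There is therefore no proof in the paper to compare your proposal against.

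That said, your sketch is essentially the right argument and is consistent with what Schulte and Th\"ale do. The reduction to $\PPP{\trmin^{(g)}_{\window}[r]\geq\rho^{-a}t}=\PPP{N_\rho\leq r-1}$ is correct, and the heart of the matter is indeed the Poisson convergence $N_\rho\Rightarrow\Po(\alpha t^\beta)$. Your description of the factorial-moment computation is accurate in spirit: the disjoint term gives $\alpha_\rho^{(g)}(t)^m$ and the overlap terms are controlled by $r_{\rho,1}^{(g)}(t)$ and $r_{\rho,2}^{(g)}(t)$. As you yourself note at the end, the route actually taken in \citet{ST2} is the Malliavin--Stein approach to Poisson approximation of Poisson $U$-statistics, which packages the combinatorics you outline into a single total-variation bound of the form
\[
  d_{\mathrm{TV}}\big(N_\rho,\Po(\alpha_\rho^{(g)}(t))\big)
  \;\leq\;
  c\Big(r_{\rho,1}^{(g)}(t)^{1/2}+r_{\rho,2}^{(g)}(t)^{1/2}\Big),
\]
so that the overlap bookkeeping is absorbed into the general machinery rather than carried out by hand via iterated Cauchy--Schwarz. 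Either route yields the claim; the Stein route is cleaner and gives a rate, which is why it is the one cited.
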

\begin{Rk}
Actually, Theorem~\ref{Th:ST} is stated in \citet{ST2} for a Poisson point
process in more general measurable spaces with intensity going to infinity. By
scaling invariance, we have re-written their result for a fixed intensity (equal
to $\pi$) and for the window $\mathbf{W}_{q(\rho)}=B(0,\pi^{-1/2}q(\rho)^{1/2})$
with $\rho\rightarrow\infty$. We also adapt their result by adding the indicator
function $\ind{z(H_{1:3})\in \window}$ to \eqref{defalpha}, \eqref{defr1} and
\eqref{defr2}.
\end{Rk}

\subsection*{Proofs for Proposition \ref{Prop:mintriangle}, Lemma
\ref{Le:deviation}, Theorem \ref{Th:maxins}, Part \eqref{case:minins} and
Theorem \ref{Th:triangle} }

\begin{proof}[Proof of Proposition \ref{Prop:mintriangle}]
Let $t\geq 0$ be fixed. We apply Theorem \ref{Th:ST} with $g=R$ and $a=1$.
First, we compute the quantity $\alpha_\rho(t):=\alpha^{(R)}_\rho(t)$ as defined
in \eqref{defalpha}. Applying a  Blaschke-Petkantschin type change of variables
(see for example Theorem 7.3.2 in \citet{SW}), we obtain
\begin{align*} 
    \alpha_\rho(t) 
    &=
    \frac{1}{24}
    \int_{\RR^2}
    \int_0^\infty
    \int_{\SSS^3}
    a(u_{1:3})
    \ind{z\in\window}
    \ind{z+r\triangle(H(u_1),H(u_2),H(u_3))\subset\mathbf{W}_{q(\rho)}}
    \ind{r<\rho^{-1}t}
    \spheremeasure(du_{1:3})drdz
    \\ 
    &=
    \frac{1}{24}
    \int_{\RR^2}
    \int_0^\infty
    \int_{\SSS^3}
    a(u_{1:3})
    \ind{z\in\mathbf{W}_1}
    \ind{z+r\rho^{-3/2}
    \triangle(H(u_1),H(u_2),H(u_3))\subset\mathbf{W}_{q(\rho)/\rho}}
    \ind{r<t}
    \spheremeasure(du_{1:3})drdz.
\end{align*}
We note that the normalisation of
$\mu_1$, as defined in \citet{SW}, is such that $\mu_1=\frac{1}{\pi}\mu$, where
$\mu$ is given in \eqref{def:mu}. It follows from the monotone convergence
theorem that
\begin{equation}\label{eq:alphaST}
    \alpha_\rho(t) \conv[\rho]{\infty}\frac{1}{24}
    \int_{\RR^2}\int_0^\infty\int_{\SSS^3}a(u_{1:3})
    \ind{z\in\mathbf{W}_1}\ind{r<t}\spheremeasure(du_{1:3})drdz = 2\pi^2t
\end{equation} since
$\lambda_2(\mathbf{W}_1)=1$ and
$\int_{\SSS^3}a(u_{1:3})\spheremeasure(du_{1:3}) = 48\pi^2$.
We must now check that
\begin{align}
\label{part1rest}
r_{\rho,1}(t)&\conv[\rho]{\infty}0, \\
\label{part2rest}
r_{\rho,2}(t)&\conv[\rho]{\infty}0,
\end{align}
where $r_{\rho,1}(t):=r^{(R)}_{\rho,1}(t)$ and
$r_{\rho,2}(t):=r^{(R)}_{\rho,2}(t)$ are defined in \eqref{defr1} and
\eqref{defr2}.

\begin{figure}[tbp]
    \begin{center}
       \includegraphics[scale=0.9]{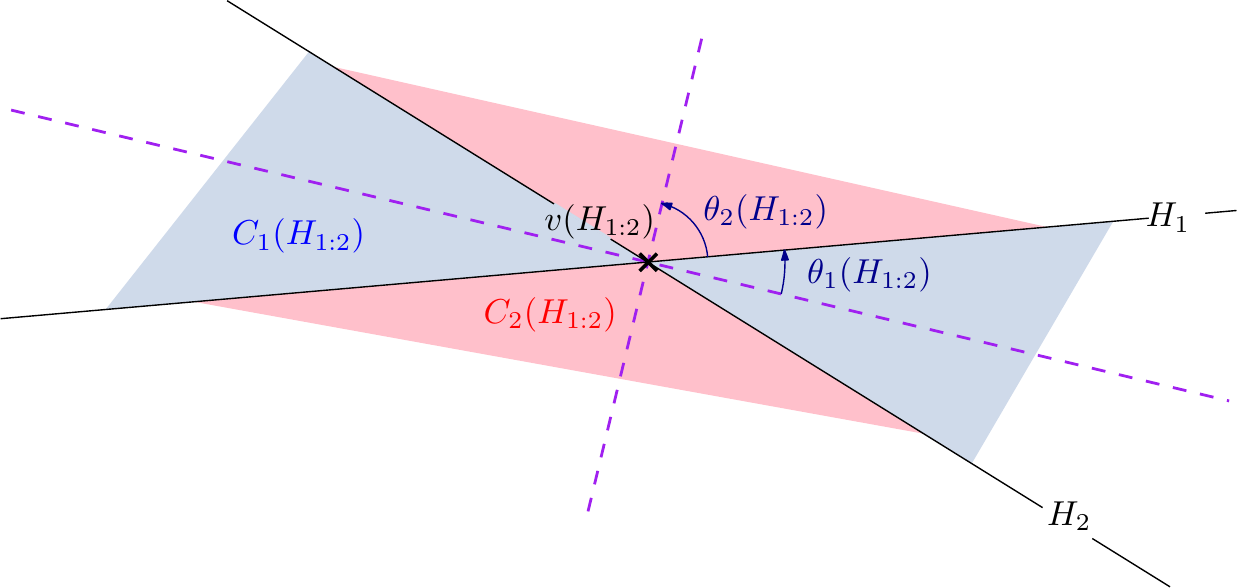}
    \end{center}
    \caption{
        Construction of double cone for change of variables.
       \label{fig:change_of_variables}
    }
\end{figure}
\begin{proof}[Proof of Convergence~\eqref{part1rest}.] Let $H_1$ be fixed and
define
\[
    G_\rho(H_1)
    := 
    \int_{\affinelines^2}
    \ind{z(H_{1:3})\in\window}
    \ind{\triangle(H_{1:3})\subset \mathbf{W}_{q(\rho)}}
    \ind{R(H_{1:3})<\rho^{-1}t}
    \measure(dH_{2:3}).
\] 
Bounding $\ind{\triangle(H_{1:3})\subset \mathbf{W}_{q(\rho)}}$ by 1, and
applying Lemma \ref{Le:boundint}, Part \eqref{eq:boundint1} (given in appendix)
to $R:=\rho^{-1}t$,  $R':=\pi^{-1/2}\rho^{1/2}$ and $z'=0$, we get for $\rho$
large enough
\[
    G_\rho(H_1)
    \leq
    c\cdot
    \rho^{-1/2}
    \ind{d(0,H_1)<\rho^{1/2}}.
\]
Noting that $r_{\rho,1}(t) = \int_{\affinelines}G_\rho(H_1)^2\mu(dH_1)$, it
follows from \eqref{def:mu} that
\begin{align}\label{eq:STcond21}
    r_{\rho,1}(t)
    &
    \leq 
    c\cdot
    \rho^{-1}
    \int_{\affinelines}
    \ind{d(0,H_1)<\rho^{1/2}}
    \measure(dH_1) 
    \notag
    \\
    &
    =
    O\left(\rho^{-1/2}\right).
 \end{align}
\end{proof}
\begin{proof}[Proof of Convergence~\eqref{part2rest}.] 
Let $H_1$ and $H_2$ be such that $H_1$ intersects $H_2$ at a unique point,
$v(H_{1:2})$. The set $H_1\cup H_2$ divides $\RR^2$ into two double-cones with
supplementary angles,  $C_i(H_{1:2})$, $1\leq i\leq 2$ (see
Figure~\ref{fig:change_of_variables}.) We then denote by $\theta_i(H_{1:2})\in
[0,\frac{\pi}{2})$ the half-angle of $C_i(H_{1:2})$ so that
$2(\theta_1(H_{1:2})+\theta_2(H_{1:2})) = \pi$. Moreover, we write
\[
    E_i(H_{1:2})
    =
    \Big\{
        \,
        H_3 \in \affinelines
        :
        z(H_{1:3}) \in \window \cap C_i(H_{1:2})
        ,\;
        \triangle(H_{1:3}) \subset \mathbf{W}_{q(\rho)}
        ,
        \;
        R(H_{1:3})<\rho^{-1} t
        \,
    \Big\}.
\] We provide below a suitable upper bound for
$G_\rho(H_1,H_2)$ defined as
\begin{align}\label{defG2} 
    G_\rho(H_1,H_2) 
    &:=
    \int_{\affinelines}\ind{z(H_{1:3})\in\window}
    \ind{\triangle(H_{1:3})\subset \mathbf{W}_{q(\rho)}}
    \ind{R(H_{1:3})<\rho^{-1}t}
    \measure(dH_3) 
    \notag  
    \\
    &=
    \sum_{i=1}^2\int_{\affinelines}\ind{H_3\in E_i(H_{1:2})}\measure(dH_3). 
\end{align}
To do this, we first establish the following lemma.
\begin{Le}
\label{Le:rho2}
Let $H_1,H_2\in\affinelines$ be fixed and let $H_3\in E_i(H_{1:2})$ for some
$1\leq i\leq 2$, then
\begin{enumerate}[(i)]
\item  $H_3\cap W_{c\cdot\rho}\neq  \varnothing$, for some $c$,
\item  
$
    H_3 \cap B
    \left(
        v(H_{1:2}),
        \frac{c\cdot\rho^{-1}}{\sin\theta_i(H_{1:2})} 
    \right)
    \neq \varnothing
$,
\item  $|v(H_{1:2})|\leq c\cdot q(\rho)^{1/2}$, for some $c$.
\end{enumerate}
\end{Le}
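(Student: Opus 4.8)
The plan is to read off all three assertions from elementary plane geometry of the triangle $\triangle(H_{1:3})$ and its inball, using only the three properties built into membership in $E_i(H_{1:2})$: that $z(H_{1:3})\in\window\cap C_i(H_{1:2})$, that $\triangle(H_{1:3})\subset\mathbf{W}_{q(\rho)}$, and that $R(H_{1:3})<\rho^{-1}t$. The first step is the combinatorial picture. Since $H_1,H_2,H_3$ are in general position (as is forced by $H_3\in E_i(H_{1:2})$), the bounded triangle $\triangle(H_{1:3})$ has for its three vertices exactly the three pairwise intersection points, so $v(H_{1:2})=H_1\cap H_2$ is the vertex of $\triangle(H_{1:3})$ opposite the side carried by $H_3$; its interior angle there is one of the two supplementary angles between $H_1$ and $H_2$, and I would note that it equals exactly $2\theta_i(H_{1:2})$ precisely because $z(H_{1:3})$ lies in the double cone $C_i(H_{1:2})$ (this angle identification is essentially part of the hypothesis $H_3\in E_i(H_{1:2})$, but it is the one point deserving care). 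Granting this, (iii) is immediate: $v(H_{1:2})$ is a vertex of $\triangle(H_{1:3})\subset\mathbf{W}_{q(\rho)}=B(0,\pi^{-1/2}q(\rho)^{1/2})$, hence $|v(H_{1:2})|\le\pi^{-1/2}q(\rho)^{1/2}$.

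For (i) I would use the incentre rather than the cruder inclusion $\triangle(H_{1:3})\subset\mathbf{W}_{q(\rho)}$, in order to get a bound of order $\rho^{1/2}$ instead of $q(\rho)^{1/2}$. Writing $z=z(H_{1:3})$, we have $|z|\le\pi^{-1/2}\rho^{1/2}$ because $z\in\window$, and $d(z,H_3)=R(H_{1:3})<\rho^{-1}t$ because the inball $B(z,R(H_{1:3}))$ of $\triangle(H_{1:3})$ is tangent to $H_3$. The triangle inequality then gives $d(0,H_3)\le|z|+d(z,H_3)\le\pi^{-1/2}\rho^{1/2}+\rho^{-1}t\le c\rho^{1/2}$ for $\rho$ large enough, so $H_3$ meets $B(0,c\rho^{1/2})$, which equals $\mathbf{W}_{\pi c^2\rho}$, giving the claim with a suitable constant.

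For (ii), the inball $B(z,R(H_{1:3}))$ sits in the corner of $\triangle(H_{1:3})$ at $v:=v(H_{1:2})$ and is tangent to both $H_1$ and $H_2$; by the first step that corner has half-angle $\theta_i(H_{1:2})$, so the incentre lies on its bisector at distance $|v-z|=R(H_{1:3})/\sin\theta_i(H_{1:2})$ from $v$. Using this together with $d(z,H_3)=R(H_{1:3})$ and $\sin\theta_i(H_{1:2})\le 1$, the triangle inequality yields $d(v,H_3)\le R(H_{1:3})(1+1/\sin\theta_i(H_{1:2}))\le 2R(H_{1:3})/\sin\theta_i(H_{1:2})<2t\rho^{-1}/\sin\theta_i(H_{1:2})$, so $H_3$ meets $B(v(H_{1:2}),\,c\rho^{-1}/\sin\theta_i(H_{1:2}))$ with $c=2t$. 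No genuine obstacle is anticipated here: the entire lemma is plane geometry plus the triangle inequality, and the only delicate point, flagged above, is the identification of the interior angle of $\triangle(H_{1:3})$ at $v(H_{1:2})$ with $2\theta_i(H_{1:2})$ via the condition $z(H_{1:3})\in C_i(H_{1:2})$.
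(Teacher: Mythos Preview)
Your argument is correct and essentially identical to the paper's: both parts (i) and (ii) are obtained via the triangle inequality through the incentre $z(H_{1:3})$, using $d(z,H_3)=R(H_{1:3})$ for (i) and the key relation $|v(H_{1:2})-z(H_{1:3})|\cdot\sin\theta_i(H_{1:2})=R(H_{1:3})$ for (ii), while (iii) follows since $v(H_{1:2})$ is a vertex of $\triangle(H_{1:3})\subset\mathbf{W}_{q(\rho)}$. The only cosmetic difference is that in (ii) the paper bounds the second term directly by $\rho^{-1}t$ rather than first by $R(H_{1:3})/\sin\theta_i(H_{1:2})$, but this is immaterial.
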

\begin{proof}[Proof of Lemma~\ref{Le:rho2}]
The first statement is a consequence of the fact that 
\begin{equation*}
    d(0,H_3)
    \;
    \leq
    \;
    |z(H_{1:3})|+d(z(H_{1:3}),H_3)
    \;
    \leq
    \;
    \pi^{-1/2}\rho^{1/2}+\rho^{-1}t
    \;
    \leq
    \;
    c
    \cdot
    \rho^{1/2}.    
\end{equation*}
For the second statement, we have
\begin{equation*}
    d(v(H_{1:2}),H_3) 
    \;
    \leq
    \;
    |v(H_{1:2}) - z(H_{1:3})|+d(z(H_{1:3}),H_3)
    \;
    \leq
    \; 
    \frac{R(H_{1:3})}{\sin\theta_i(H_{1:2})} + \rho^{-1}t
    \;
    \leq
    \;
    \frac{c\cdot\rho^{-1}}{\sin\theta_i(H_{1:2})}    
\end{equation*}
since $R(H_{1:3}) = |v(H_{1:2}) - z(H_{1:3})|\cdot \sin\theta_i(H_{1:2})$. 
Finally, the third statement comes from the fact that 
$v(H_{1:2})\in \mathbf{W}_{q(\rho)}$ 
since
$\triangle(H_{1:3})\subset \mathbf{W}_{q(\rho)}$.
\end{proof}
We apply below the first statement of Lemma \ref{Le:rho2} when
$\theta_i(H_{1:2})$ is small enough and the second one otherwise. More
precisely, it follows from~\eqref{defG2} and Lemma~\ref{Le:rho2} that
\begin{align}\label{eq:majG2rho2} 
    G_\rho(H_1,H_2)
    &
    \leq
    \sum_{i=1}^2
    \int_{\affinelines}
    \ind{H_3\cap \mathbf{W}_{c\cdot \rho}\neq\varnothing}
    \ind{|v(H_{1:2})|\leq c\cdot q(\rho)^{1/2}}
    \ind{\sin\theta_i(H_{1:2})\leq \rho^{-3/2}}\measure(dH_3)
    \notag
    \\
    &\qquad
    +
    \int_{\affinelines}
    \ind{
        H_3\cap 
        B
        \left(
            v(H_{1:2}),
            \frac{c\cdot\rho^{-1}}{\sin\theta_i(H_{1:2})}
        \right)
        \neq
        \varnothing
    }
    \ind{|v(H_{1:2})|
    \leq
    c\cdot q(\rho)^{1/2}}
    \ind{\sin\theta_i(H_{1:2})>\rho^{-3/2}}
    \measure(dH_3).
\end{align}
Integrating over $H_3$ and applying \eqref{eq:Crofton} to 
\begin{equation*}
    B 
    := 
    \mathbf{W}_{c\cdot \rho} = B(0,c^{1/2}\rho^{1/2})
    \quad
    \text{and} 
    \quad
    B' := B\left(v(H_{1:2}),\frac{c\cdot\rho^{-1}}{\sin\theta_i(H_{1:2})} \right),
\end{equation*}
we obtain
\begin{align}\label{eq:majG2rho221} 
    G_\rho(H_1,H_2)
    &
    \leq
    c\cdot\sum_{i=1}^2  
    \left(
        \rho^{1/2}
        \ind{
            \sin\theta_i(H_{1:2}) 
            \leq
            \rho^{-3/2}} 
            +
            \frac{\rho^{-1}}{\sin\theta_i(H_{1:2})
        }
        \ind{\sin\theta_i(H_{1:2}) > \rho^{-3/2}}
    \right)
    \notag
    \\
    &\qquad
    \times
    \ind{|v(H_{1:2})| \leq c\cdot q(\rho)^{1/2}}.
\end{align} 
Applying the fact that
\begin{equation*}
\label{eq:majG2rho221}
    r_{\rho,2}(t) 
    =
    \int_{\affinelines}G_\rho(H_1,H_2)^2\mu(dH_{1:2})
    \qquad
    \text{and}
    \qquad
    \left(
        \sum_{i=1}^2 (a_i+b_i)
    \right)^2
    \leq 4 \sum_{i=1}^2
    \left( 
        a_i^2+b_i^2
    \right) 
\end{equation*}
for any $a_1,a_2,b_1,b_2\in\RR$, it follows from \eqref{eq:majG2rho221} that
\begin{align*} 
    r_{\rho,2}(t)
    &
    \leq
    c\cdot 
    \sum_{i=1}^2
    \int_{\affinelines^2}
    \left(
        \rho\ind{\sin\theta_i(H_{1:2}) \leq \rho^{-3/2}} 
        +
        \frac{\rho^{-2}}{\sin^2\theta_i(H_{1:2})}
        \ind{\sin\theta_i(H_{1:2}) > \rho^{-3/2}}
    \right)
    \\
    &\qquad
    \times
    \ind{|v(H_{1:2})| \leq c\cdot q(\rho)^{1/2}}
    \measure(dH_{1:2})
\end{align*}
For any couple of lines $(H_1,H_2)\in\affinelines^2$ such that $H_1 = H(u_1,
t_1)$ and $H_2 = H(u_2, t_2)$ for some $u_1, u_2 \in \SSS$ and $t_1, t_2 > 0$,
let $\theta(H_1, H_2)\in [-\tfrac{\pi}{2},\tfrac{\pi}{2}) $ be the oriented half
angle between the vectors $u_1$ and $u_2$.
In particular, the quantity $|\theta(H_{1:2})|$ is
equal to $\theta_1(H_{1:2})$ or $\theta_2(H_{1:2})$. This implies that
\begin{align}\label{eq:majG2rho22}
    r_{\rho,2}(t)
    &
    \leq  
    4
    c
    \cdot
    \int_{\affinelines^2}
    \left(
        \rho\ind{\sin\theta(H_{1:2})
        \leq
        \rho^{-3/2}}
        +
        \frac{\rho^{-2}}{\sin^2\theta(H_{1:2})}
        \ind{\sin\theta(H_{1:2}) > \rho^{-3/2}}
    \right)
    \ind{\theta(H_{1:2})\in \left[0,\frac{\pi}{2}\right)}
    \notag
    \\
    &\qquad\times 
    \ind{|v(H_{1:2})| \leq c\cdot q(\rho)^{1/2}}
    \measure(dH_{1:2}).
\end{align} 
With each $v=(v_1,v_2)\in\RR^2$,  $\beta\in [0,2\pi)$ and $\theta\in
[0,\pi/2)$, we associate two  lines $H_1$ and $H_2$ as
follows. We first define $L(v_1,v_2,\beta)$ as the line containing $v=(v_1,v_2)$
with normal vector  $\vec{\beta}$, where for any $\alpha\in [0,2\pi)$, we write
$\vec{\alpha}=(\cos\alpha,\sin\alpha)$. Then we define $H_1$ and $H_2$ as the
lines containing $v=(v_1,v_2)$ with angles $\theta$ and $-\theta$ with respect
to $L(v_1,v_2,\beta)$ respectively. These lines can be written as
$H_1=H(u_1,t_1)$ and $H_2=H(u_2,t_2)$ with
\begin{align*}
    u_1
    \;
    &:=
    \;
    u_1(\beta,\theta)
    \;
    :=
    \;
    \overrightarrow{\beta-\theta}, 
    \\[2mm]
    t_1
    \;
    &:=
    \;
    t_1(v_1,v_2,\beta,\theta)
    \;
    :=
    \;
    \left|-\sin(\beta-\theta)v_1+\cos(\beta-\theta)v_2\right|,
    \\[1mm]
    u_2 
    \;
    &:=
    \;
    u_2(\beta,\theta)
    \;
    :=
    \;
    \overrightarrow{\beta+\theta},
    \\[2mm]
    t_2
    \;
    &:=
    \;
    t_2(v_1,v_2,\beta,\theta)
    \;
    :=
    \;
    \left|\sin(\beta+\theta)v_1+\cos(\beta+\theta)v_2\right|.
\end{align*}
Denoting by $\overline{\alpha}$, the unique real number in $[0,2\pi)$ such that
$\overline{\alpha}\equiv\alpha\mod 2\pi$, we define
\begin{align*}
    \psi
    \colon
    \RR^2 
    \times
    [0,2\pi)
    \times 
    [0,\tfrac{\pi}{2})
    &\longrightarrow
    \RR_+\times [0,2\pi)
    \times 
    \RR_+\times \left[0,2\pi\right)
    \\
    (v_1,v_2,\beta,\theta)
    &
    \longmapsto
    \left(t_1(v_1,v_2,\beta,\theta),\;
    \overline{\beta-\theta},\;
    t_2(v_1,v_2,\beta,\theta),\;
    \overline{\beta+\theta}\right). 
\end{align*} 
Modulo null sets, $\psi$ is a $\mathcal{C}^1$ diffeomorphism with Jacobian
$J\psi$ given  by $|J\psi(v_1,v_2,\beta,\theta)| = 2\sin 2\theta$  for any point
$(v_1,v_2,\beta,\theta)$ where $\psi$ is differentiable. Taking the change of
variables as defined above, we deduce from \eqref{eq:majG2rho22} that
\begin{align}\label{eq:majrrho2t}
    r_{\rho,2}(t)
    &\leq
    c\cdot 
    \int_{\RR^2}
    \int_0^{2\pi}
    \int_0^{\frac{\pi}{2}}
    \sin(2\theta)
    \left(
        \rho\ind{\sin\theta\leq \rho^{-3/2}} 
        +
        \frac{\rho^{-2}}{\sin^2\theta}
        \ind{\sin\theta > \rho^{-3/2}}
    \right)
    \ind{|v| \leq c\cdot q(\rho)^{1/2}}
    d\theta 
    d\beta dv
    \notag
    \\
    &=
    O
    \left(
        \log\rho\cdot q(\rho)\cdot\rho^{-2}
    \right).
\end{align} 
As a consequence of \eqref{def:q}, the last term converges to 0 as $\rho$ goes
to infinity.
\end{proof}
The above combined with \eqref{eq:alphaST}, \eqref{eq:STcond21} and Theorem
\ref{Th:ST} concludes the proof of Proposition \ref{Prop:mintriangle}.
\end{proof}
\begin{proof}[Proof of Lemma \ref{Le:deviation}, \eqref{Le:deviation1}]
Almost surely, there exists a unique triangle with incentre contained in
$\mathbf{W}_{q(\rho)}$, denoted by $\Delta_{\window} [r]$, such that
\begin{equation*}
    z(\triangle_{\window} [r])\in \window
    \qquad
    \text{and}
    \qquad
    R(\triangle_{\window}[r]) 
    =
    \trmin_{\window}[r].    
\end{equation*}
Also, $z(\triangle_{\window} [r])$ is the incentre of a
cell of $\mosaic$ if and only if $\XX\cap B(\triangle_{\window}
[r])=\varnothing$. Since $\cellmin_{\window}[r]\geq \trmin_{\window}[r]$, this
implies that
\begin{equation*}
    \cellmin_{\window}[r] 
    \;
    =
    \;
    \trmin_{\window}[r] 
    \qquad
    \Longleftrightarrow   
    \qquad
    \exists 1\leq k\leq r 
    \quad
    \text{such that}
    \quad
    \XX\cap B(\triangle_{\window}[k])\neq \varnothing.
\end{equation*}
In particular, for any $\varepsilon>0$, we obtain 
\begin{align}\label{eq:difftriangle}
    \PP
    \Big(
        \cellmin_{\window}[r] 
        \neq
        \trmin_{\window}[r]
    \Big)
    &
    \leq
    \sum_{k=1}^r
    \bigg(
        \PP\Big(
            \XX 
            \cap
            B(\triangle_{\window}[k])
            \neq
            \varnothing,\;
            R(\triangle_{\window}[k])
            <
            \rho^{-1+\varepsilon}
        \Big)
        \notag
        \\
        &
        \qquad
        +
        \PP\Big(
            R(\triangle_{\window}[k])>\rho^{-1+\varepsilon}
        \Big)
    \bigg).
\end{align}
The second term of the series converges to 0 as $\rho$ goes to infinity thanks
to Proposition \ref{Prop:mintriangle}. For the first term, we obtain for any
$1\leq k\leq r$, that
\begin{align*}
    &
    \PPP{
        \XX\cap B(\triangle_{\window}[k]) 
        \neq
        \varnothing
        ,
        \; 
        R(\triangle_{\window}[k]) < \rho^{-1+\varepsilon}
    }
    \\
    &
    \qquad
    \leq
    \PPP{
        \bigcup_{H_{1:4}\in\XX_{\neq}^4}\{z(H_{1:3})\in\window\}\cap
        \{R(H_{1:3})<\rho^{-1+\varepsilon}\}
        \cap
        \{H_4\cap B(z(H_{1:3}), \rho^{-1+\varepsilon})\neq \varnothing\}
    }
    \\
    &
    \qquad
    \leq
    \EEE{
        \sum_{H_{1:4}\in\XX_{\neq}^4}
        \ind{z(H_{1:3})\in\window}
        \ind{R(H_{1:3})<\rho^{-1+\varepsilon}}
        \ind{H_4\cap B(z(H_{1:3}),
        \rho^{-1+\varepsilon})\neq \varnothing}
    }
    \\
    &
    \qquad 
    =
    \int_{\affinelines^4}
    \ind{z(H_{1:3})\in\window}
    \ind{R(H_{1:3})<\rho^{-1+\varepsilon}}
    \ind{H_4\cap B(z(H_{1:3}),
    \rho^{-1+\varepsilon})\neq \varnothing}
    \mu(dH_{1:4}),
\end{align*}
where the last line comes from Mecke-Slivnyak's formula (Corollary 3.2.3 in
\citet{SW}). Applying the Blaschke-Petkantschin change of variables, we obtain
\begin{align*}
    &
    \PPP{
        \XX \cap B(\triangle_{\window}[k]) \neq \varnothing
        ,
        \;
        R(\triangle_{\window}[k]) < \rho^{-1+\varepsilon}
    }
    \\
    &\qquad
    \leq 
    c
    \cdot
    \int_{\window}
    \int_0^{\rho^{-1+\varepsilon}}
    \int_{\SSS^3}
    \int_{\affinelines}
    a(u_{1:3})
    \ind{H_4\cap B(z,\rho^{-1+\varepsilon})\neq \varnothing}
    \mu(dH_4)
    \sigma(du_{1:3})drdz.
\end{align*} 
As a consequence of \eqref{defphi} and \eqref{eq:Crofton}, we have
\begin{equation*}
    \int_{\affinelines}
    \ind{
        H_4\cap B(z,\rho^{-1+\varepsilon})
        \neq
        \varnothing
    }
    \mu(dH_4)
    \;
    =
    \;
    c\cdot \rho^{-1+\varepsilon}     
\end{equation*}
for any $z\in\RR^2$. Integrating over $z\in\window$, $r< \rho^{-1+\varepsilon}$
and $u_{1:3}\in\SSS^3$, we obtain
\begin{equation}\label{eq:difftriangle2}
    \PPP{
        \XX\cap B(\triangle_{\window}[k]) \neq \varnothing
        ,
        \;
        R(\triangle_{\window}[k])< \rho^{-1+\varepsilon}
    }
    \leq c\cdot\rho^{-1+2\varepsilon}
\end{equation} 
since $\lambda_2(\window)=\rho$. Taking $\varepsilon<\frac{1}{2}$, we deduce
Lemma \ref{Le:deviation}, \eqref{Le:deviation1} from \eqref{eq:difftriangle} and
\eqref{eq:difftriangle2}.
\end{proof}

\begin{proof}[Proof of Theorem \ref{Th:triangle} ]
Let $\varepsilon\in (0,\frac{1}{2})$ be fixed. For any $1\leq k\leq r$, we write
\begin{align*}    
    &
    \PP\Big( n(C_{\window}[k]) \neq 3\Big) 
    \\
    &\qquad
    =
    \PP\Big(
        n(C_{\window} [k]) \geq 4
        ,\;
        m_{\window}[k]\geq \rho^{-1+\varepsilon}
    \Big)
    +
    \PP\Big(
        n(C_{\window} [k]) \geq 4
        ,
        \;
        m_{\window}[k] < \rho^{-1+\varepsilon}
    \Big).
\end{align*}
According to Proposition \ref{Prop:mintriangle}, Lemma \ref{Le:deviation},
\eqref{Le:deviation1} and the fact that $\cellmin_{\window}[k]\geq
m_{\window}[k]$, the first term of the right-hand side converges to 0 as $\rho$
goes to infinity. For the second term, we obtain from \eqref{campbell} that
\begin{align}
\label{eq:mintriangle} 
    \PP\Big(
        n(C_{\window} [k]) \geq 4
        ,\;
        m_{\window}[k] < \rho^{-1+\varepsilon}
    \Big)
    &
    \leq
    \PPP{
        \min_{
            \substack{
                C\in\mosaic,
                \\
                z(C)\in \mathbf{W}_{\rho},\, n(C)\geq 4
            }
        }
        R(C) < \rho^{-1+\varepsilon}
    }
    \notag
    \\
    & 
    \leq
    \EEE{
        \sum_{
            \substack{
                C\in \mosaic,
                \\
                z(C)\in \mathbf{W}_{\rho}
            } 
        }
        \ind{R(C)< \rho^{-1+\varepsilon}}\ind{n(C)\geq 4}
    }
    \notag
    \\
    & 
    =
    \pi\rho\cdot
    \PP\Big(
        R(\cell)<\rho^{-1+\varepsilon}, \; n(\cell)\geq 4
    \Big).
\end{align}
We give below an integral representation of
$\PP(R(\cell)<\rho^{-1+\varepsilon},\;n(\cell)\geq 4)$. Let $r>0$ and
$u_1,u_2,u_3\in\SSS$ be fixed. We denote by $\triangle(u_{1:3},r)$ the triangle
$\triangle(H(u_1,r), H(u_2,r), H(u_3,r) )$. Let us notice that the random
polygon $C(\XX,u_{1:3},r)$, as defined in
\eqref{def:typicalcellintegral}, satisfies $n(C(\XX,u_{1:3},r))\geq 4$ if and
only if $\XX\in \affinelines\left(\triangle(u_{1:3},r)\setminus
B(0,r)\right)$. According to \eqref{Rk:phi} and
\eqref{eq:explicitcell}, this implies that
\begin{align*}
    &
    \pi
    \rho
    \cdot
    \PP\Big(
        R(\cell)<\rho^{-1+\varepsilon}
        ,
        \;
        n(\cell)\geq 4
    \Big)
    \\
    &\qquad
    =
    \frac{\rho}{24}\int_0^{\rho^{-1+\varepsilon}}
    \int_{\SSS^3}
    \left(1-e^{-\phi(\triangle(u_{1:3},r)\setminus B(0,r))}\right)
    e^{-2\pi r}
    a(u_{1:3})
    \sigma(du_{1:3})dr.
\end{align*}
Using the fact that $1-e^{-x}\leq x$ for all $x\in\RR$ and the fact that 
\begin{equation*}
    \phi\big(\triangle(u_{1:3},r) \setminus B(0,r)\big)
    \;\leq\;
    \phi(\triangle(u_{1:3},r))
    \;=\;
    r \ell(\triangle(u_{1:3}))    
\end{equation*}
according to \eqref{eq:Crofton}, we get
\begin{align*}
    \pi\rho\cdot
    \PP
    \Big(
        R(\cell)<\rho^{-1+\varepsilon}
        , 
        \;
        n(\cell)\geq 4
    \Big)
    &
    \leq \frac{\rho}{24}
    \int_0^{\rho^{-1+\varepsilon}}
    \int_{\SSS^3}re^{-2\pi r} 
    \ell(\triangle(u_{1:3}))
    \sigma(du_{1:3})dr
    \\
    &
    =
    O\left(\rho^{-1+2\varepsilon}\right).
\end{align*}
This together with \eqref{eq:mintriangle} gives that 
\begin{equation*}
    \PP
    \Big( 
        n(C_{\window} [k]) \geq 4,\; m_{\window}[k]< \rho^{-1+\varepsilon}
    \Big)
    \conv[\rho]{\infty} 0.
\end{equation*}
\end{proof}
\begin{proof}[Proof of Lemma \ref{Le:deviation}, \eqref{Le:deviation3}]
Since $m_{\window}[r] \neq \cellmin_{\window}[r]$ if and only if
   $ \triangle\left(C_{\window}[k]\right)\cap \mathbf{W}^\text{c}_{q(\rho)}
    \neq
    \varnothing    $
for some $1\leq k\leq r$, we get for any $\varepsilon>0$
\begin{align}\label{eq:majdeviation2}
    &
    \PP\Big(
        m_{\window}[r] 
        \neq 
        \cellmin_{\window}[r]
    \Big)
    \notag
    \\
    &\qquad 
    \leq 
    \sum_{k=1}^r
    \Bigg( 
        \PP\Big( R(C_{\window} [k])\geq \rho^{-1+\varepsilon} \Big)  
        +
        \PP\Big( n(C_{\window} [k]) \neq 3 \Big)
        \notag
        \\
        &\qquad\qquad 
        +
        \PP\Big(
            \triangle
            \left(C_{\window}[k]\right)
            \cap
            \mathbf{W}^\text{c}_{q(\rho)}
            \neq\varnothing
            ,\;
            n(C_{\window} [k])=3
            ,\;
            R(C_{\window} [k])<\rho^{-1+\varepsilon} 
        \Big)
    \Bigg).
 \end{align}
As in the proof of Theorem~\ref{Th:triangle}, the first term of the series
converges to zero. The same fact is also true for the second term as a
consequence of Theorem~\ref{Th:triangle}. Moreover, for any $1\leq k\leq r$,  we
have
\begin{align*}
    &
    \PP\Big(
        \triangle
        \left(C_{\window}[k]\right)
        \cap
        \mathbf{W}^\text{c}_{q(\rho)}
        \neq
        \varnothing,
        \;
        n(C_{\window}[k])
        =
        3
        ,
        \;
        R(C_{\window} [k])
        <
        \rho^{-1+\varepsilon}
    \Big)
    \\
    &\quad 
    \leq
    \PP\Bigg(
        \bigcup_{H_{1:3} \in \XX_{\neq}^3}
        \left\{
            \XX \cap \triangle(H_{1:3})  
            =
            \varnothing
            ,
            \;
            z(H_{1:3})\in\window,
            \;
            \triangle(H_{1:3}) \cap \mathbf{W}^\text{c}_{q(\rho)}
            \neq
            \varnothing
            ,
            \;
            R(H_{1:3}) < \rho^{-1+\varepsilon} 
        \right\}  
    \Bigg)
    \\
    &
    \quad
    \leq
    \int_{\affinelines^3}
    \PP\Big(\XX\cap \triangle(H_{1:3}) = \varnothing \Big)
    \ind{z(H_{1:3})\in\window}
    \ind{\triangle(H_{1:3})\cap \mathbf{W}^\text{c}_{q(\rho)}\neq\varnothing}
    \ind{R(H_{1:3})< \rho^{-1+\varepsilon}}
    \measure\left(dH_{1:3}\right)
    \\
    &
    \quad 
    \leq
    \int_{\affinelines^3}
    e^{-\ell\left( \triangle(H_{1:3})\right)}
    \ind{z(H_{1:3})\in\window}
    \ind{
        \ell\left( \triangle(H_{1:3})\right)
        >
        \pi^{-1/2}(q(\rho)^{1/2}-\rho^{1/2})
    }
    \ind{R(H_{1:3}) < \rho^{-1+\varepsilon}}
    \measure\left(dH_{1:3}\right),
\end{align*} 
where the second and the third inequalities come from  Mecke-Slivnyak's formula
and \eqref{Rk:phi} respectively. Using the fact that
\begin{equation*}
    e^{
        -\ell
        \left(
            \triangle(3H_{1:3})
        \right)
    }
    \;\leq\;
    e^{
        -\pi^{-1/2}
        \left(
            q(\rho)^{1/2}-\rho^{1/2}
        \right)
    },
\end{equation*}
and applying the Blaschke-Petkantschin formula, we get
\[
    \PP\Big(
        \triangle\left(C_{\window}[k]\right) 
        \cap
        \mathbf{W}^\text{c}_{q(\rho)}\neq\varnothing
        ,\;
        n(C_{\window} [k])=3  
    \Big)
    \leq
    c\cdot\rho^{\varepsilon}
    \cdot 
    e^{-\pi^{-1/2}\left(q(\rho)^{1/2}-\rho^{1/2} \right)}.
\]
According to \eqref{def:q}, the last term converges to zero. This together with
\eqref{eq:majdeviation2} completes the proof of Lemma~\ref{Le:deviation},
\eqref{Le:deviation3}.
\end{proof}
\begin{proof}[Proof of Theorem \ref{Th:maxins}, \eqref{eq:minins}]
The proof follows immediately from Proposition \ref{Prop:mintriangle} and Lemma
\ref{Le:deviation}.
\end{proof}
\begin{Rk}
As mentioned on page \pageref{sec:minins}, we introduce an auxiliary
function $q(\rho)$ to avoid boundary effects. This addition was necessary to
prove the convergence of $r_{\rho,2}(t)$ in \eqref{eq:majrrho2t}.
\end{Rk}


\section{Technical results} \label{sec:technicallemmas}
In this section, we establish two results which will be needed in order to derive
the asymptotic behaviour of $M_{\window}[r]$.

\subsection{Poisson approximation} 
Consider a measurable function $f\colon \cK \to \RR$ and a
\emph{threshold} $v_\rho$ such that $v_\rho \rightarrow \infty$ as
$\rho\rightarrow\infty$. The cells $C\in\mosaic$ such that $f(C) > v_\rho$ and
$z(C) \in\window$ are called the \emph{exceedances}. A classical tool in extreme
value theory is to estimate the limiting distribution of the number of
exceedances by a Poisson random variable. In our case, we achieve this with the
following lemma.

\begin{Le} \label{Le:henze} 
    Let $\mosaic$ be a Poisson line tessellation embedded in $\RR^2$ and suppose
    that for any $K\geq 1$,
    \begin{equation}\label{Hyp:henze}
        \EEE{
            \sum_{
                \substack{ 
                    C_{1:K}\in(\mosaic)^K_{\neq} \\
                    z(C_{1:K})\in\window 
                }
            }
            \mathbb{1}_{f(C_{1:K})>v_\rho}
        }
        \conv[\rho]{\infty}\tau^K.
        \quad
 \end{equation}     
 Then \begin{equation*}
        \PP
        \Big(
            M_{f,\window}[r]
            \leq
            v_\rho
        \Big)
        \conv[\rho]{\infty}
        \sum_{k=0}^{r-1}
        \frac{\tau^k}{k!} e^{-\tau}.
    \end{equation*}
\end{Le}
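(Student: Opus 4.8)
The plan is to prove this by the method of moments for Poisson approximation, recognising that the number of exceedances $N_\rho := \#\{C\in\mosaic : z(C)\in\window,\; f(C)>v_\rho\}$ has factorial moments converging to those of a Poisson random variable with mean $\tau$. First I would observe that the hypothesis \eqref{Hyp:henze} is exactly the statement that the $K$-th factorial moment of $N_\rho$ converges to $\tau^K$: indeed,
\begin{equation*}
    \EEE{
        \sum_{
            \substack{ C_{1:K}\in(\mosaic)^K_{\neq} \\ z(C_{1:K})\in\window }
        }
        \ind{f(C_{1:K})>v_\rho}
    }
    =
    \EEE{ N_\rho (N_\rho-1)\cdots(N_\rho-K+1) }
    =
    \EEE{ (N_\rho)_K },
\end{equation*}
since the sum over ordered $K$-tuples of distinct exceedance cells is precisely the $K$-th descending factorial of their count. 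Thus the assumption gives $\EEE{(N_\rho)_K}\to\tau^K$ for every fixed $K\geq 1$.

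Next I would invoke the standard fact from the theory of moment convergence that if all factorial moments of a sequence of $\NN$-valued random variables converge to $\tau^K$ (the factorial moments of $\Po(\tau)$), and the Poisson distribution is determined by its moments, then $N_\rho$ converges in distribution to $\Po(\tau)$. Concretely, one can write the probability generating function or, more elementarily, express $\PPP{N_\rho = j}$ through the inclusion–exclusion / Bonferroni-type identity $\PPP{N_\rho = j} = \sum_{K\geq j} \frac{(-1)^{K-j}}{j!\,(K-j)!}\EEE{(N_\rho)_K}$, and pass to the limit term by term (justified by the uniform summability coming from $\EEE{(N_\rho)_K}\to\tau^K<\infty$). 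This yields $\PPP{N_\rho=j}\to e^{-\tau}\tau^j/j!$ for each $j\geq 0$.

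Finally I would translate this into the statement about order statistics. The key elementary observation is that $M_{f,\window}[r]\leq v_\rho$ if and only if fewer than $r$ cells exceed the threshold, i.e. $\{M_{f,\window}[r]\leq v_\rho\} = \{N_\rho \leq r-1\}$, because $M_{f,\window}[r]$ is the $r$-th largest value of $f$ among the admissible cells. Hence
\begin{equation*}
    \PPP{M_{f,\window}[r]\leq v_\rho}
    =
    \PPP{N_\rho\leq r-1}
    =
    \sum_{k=0}^{r-1}\PPP{N_\rho=k}
    \conv[\rho]{\infty}
    \sum_{k=0}^{r-1}\frac{\tau^k}{k!}e^{-\tau},
\end{equation*}
which is the desired conclusion.

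The main obstacle is the middle step: rigorously justifying that convergence of all factorial moments implies convergence in distribution to the Poisson law. One must either cite a precise reference (e.g. the moment problem being well posed for $\Po(\tau)$, or a Bonferroni/inclusion–exclusion argument with a tail bound) or supply the short argument that the alternating series $\sum_{K\geq j}\frac{(-1)^{K-j}}{j!(K-j)!}\EEE{(N_\rho)_K}$ converges uniformly in $\rho$, so that the limit may be taken inside the sum; this is where care is needed, since one is interchanging a limit with an infinite sum. Everything else — the identification of the factorial moment with the sum over distinct tuples, and the event identity $\{M_{f,\window}[r]\leq v_\rho\}=\{N_\rho\leq r-1\}$ — is routine bookkeeping.
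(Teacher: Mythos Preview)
Your proposal is correct and follows essentially the same approach as the paper: identify the hypothesis as convergence of all factorial moments of the exceedance count to $\tau^K$, deduce convergence in distribution to $\Po(\tau)$ by the method of moments, and conclude via the identity $\{M_{f,\window}[r]\leq v_\rho\}=\{N_\rho\leq r-1\}$. The only cosmetic difference is that the paper converts the factorial moments into ordinary moments through the Stirling identity $\EE[N_\rho^n]=\sum_{K=1}^n\stirling{n}{K}\EE[(N_\rho)_K]\to\sum_{K=1}^n\stirling{n}{K}\tau^K=\EE[\Po(\tau)^n]$ before invoking the method of moments, rather than appealing to inclusion--exclusion.
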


\begin{proof}[Proof of Lemma \ref{Le:henze}]
Let the number of exceedance cells be denoted
\begin{equation*}    
    U(v_\rho) 
    :=
    \sum_{
        \substack{
            C\in\mosaic,\\
            z(C)\in\window        
        }
    }
    \ind{ f(C) > v_\rho }.
\end{equation*}
Let $1\leq K\leq n$ and let $\stirling{n}{K}$ denote the Stirling number of the
second kind. According to \eqref{Hyp:henze}, we have
\begin{align*}
    \EEE{U(v_\rho)^n}
    &=
    \EEE
    {
        \sum_{K=1}^n 
        \stirling{n}{K}\,
        \,
        U(v_\rho)
        \cdot
        \big(U(v_\rho)-1\big)
        \cdot
        \big(U(v_\rho)-2\big)
        \cdots
        \big(U(v_\rho) - K+1\big)
    } 
    \\
    &=
    \sum_{K=1}^n 
    \stirling{n}{K}\,
    \EEE
    {
        \sum_{ 
            \substack{
                C_{1:K} \in{\mosaic}^K_{\neq}, \\ 
                z( C_{1:K} )\in \window 
            }
        }
        \ind{f(C_{1:K})>v_\rho}
    } 
    \\
    &\conv[\rho]{\infty}
    \sum_{K=1}^n \stirling{n}{K} \tau^K 
    \\[1mm]
    &=
    \EEE{\Po(\tau)^n}.
\end{align*}
Thus by the method of moments, $U(v_\rho)$ converges in distribution to a Poisson
distributed random variable with mean $\tau$. We conclude the proof by
noting that $ M_{f,\window}[r] \leq v_\rho$ if and only if $U(v_\rho)\leq r-1$.
\end{proof}
Lemma~\ref{Le:henze} can be generalised for any window $\window$ and for any
tessellation in any dimension. A similar method was used to provide the
asymptotic behaviour for couples of random variables in the particular setting
of a Poisson-Voronoi tessellation (see Proposition~2 in \citet{CC}). The main
difficulty is applying Lemma~\ref{Le:henze}, and we deal partially with this in
the following section.

\subsection{A uniform upper bound for $\linesintersecting$ for the union of discs}
\label{sec:uniform_upper_bound}
Let $\phi: \borel(\RR^2)\rightarrow\RR_+$ as in \eqref{defphi}. We evaluate
$\phi(B)$ in the particular case where $B=\bigcup_{1\leq i\leq K}^KB(z_i,r_i)$
is a finite union of balls centred in $z_i$ and with radius $r_i$, $1\leq i\leq
K$. Closed form representations for $\phi(B)$ could be provided but these
formulas are not of practical interest to us. We provide below (see Proposition
\ref{Prop:uniformupperbound}) some approximations for $\phi(\cup_{1\leq i\leq
K}^KB(z_i,r_i) )$ with simple and quasi-optimal lower bounds.

\subsubsection{Connected components of cells}
\label{sec:connected_components_notation}
\label{sec:connected_components}
 Our bound will follow by splitting collections of discs into a set of
connected components. Suppose we are given a threshold $v_\rho$ such that
$v_\rho \rightarrow \infty$ as $\rho\rightarrow \infty$ and $K\geq 2$ discs
$B\left(z_{i}, r_{i}\right)$, satisfying $z_{i}\in\RR^2$, $r_{i} \in\RR_+$ and
$r_{i}>v_\rho$, for all $i = 1,\dots, K$. We take $R := \max_{1\le i \le K}
r_{i}$. The \emph{connected components} are constructed from the graph
with vertices $B(z_i,r_i), i=1,\dots,K$ and edges
\begin{equation}
    \label{eq:rel_connected_components}
    B(z_i, r_i) \longleftrightarrow B(z_j, r_j) 
    \quad
    \Longleftrightarrow
    \quad
    B(z_i, R^3) \cap B(z_j, R^3) \neq \varnothing.
\end{equation}
On the right-hand side, we have chosen radii of the form $R^3$ to provide a
simpler lower bound in Proposition \ref{Prop:uniformupperbound}. The \emph{size}
of a component is the number of discs in that component. To refer to these
components, we use the following notation which is highlighted for ease of
reference.
\subsubsection*{Notation}
\begin{itemize}
    \item
    For all $k\leq K$, write $n_k:=n_k(z_{1:K}, R)$ to denote the
    number of connected components of size $k$. Observe that in particular,
    $\sum_{k=1}^K k\cdot n_k=K$.
    \item 
    Suppose that with each component of size $k$ is assigned a unique label $1\leq j \leq n_k$.
    We  then write 
    $
        B_k^{(j)} := B_k^{(j)}(z_{1:K}, R)
    $,
    to refer to the union of balls in the $j$\textit{th} component of size $k$.
    \item
    Within a component, we write 
    $B_k^{(j)}[r] := B_k^{(j)}(z_{1:K}, R)[r]$, $1\leq r\leq k$,  
    to refer to the ball having the $r$\textit{th} largest radius 
    in the $j$\textit{th} cluster of size $k$. In particular, we have $B_k^{(j)} = \bigcup_{r =1}^kB_k^{(j)}[r]$.  We also write $z_k^{(j)}[r]$ and $r_k^{(j)}[r]$ as shorthand to refer 
    to the centre and radius of the ball $B_k^{(j)}[r]$.    
\end{itemize}
%
\begin{figure}[tbp]
    \begin{center}
       \includegraphics[scale=0.9]{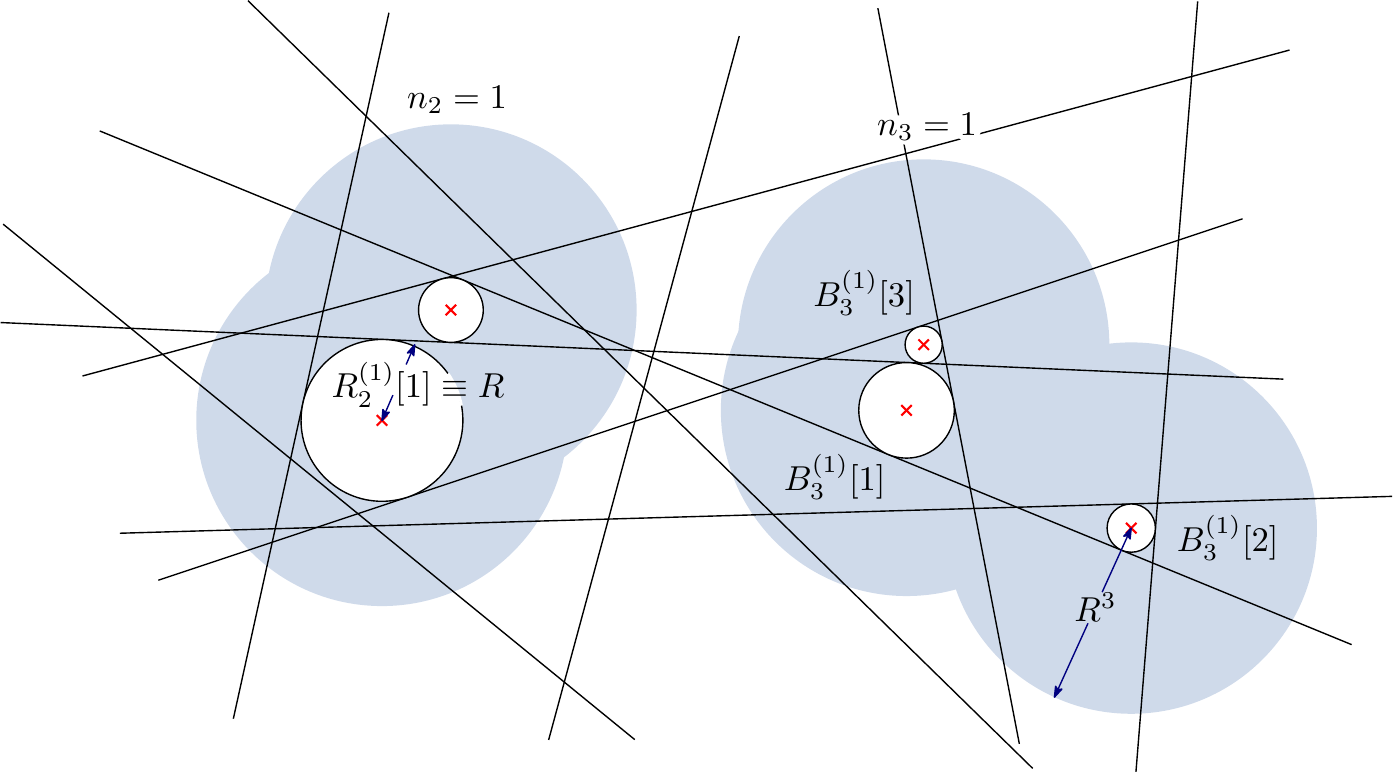}
    \end{center}
    \caption{
        Example connected components for $K=5$ and $(n_1,\dots,n_K) = (0,1,1,0,0)$. 
       \label{fig:clusters}
    }
\end{figure}
%

\subsubsection{The uniform upper bound}
In extreme value theory, a classical method to investigate the behaviour of the
maximum of a sequence of random variables relys on checking two conditions of
the sequence. One such set of conditions is given by \citet{L1}, who defines the
conditions $D(u_n)$ and $D'(u_n)$ which represent an asymptotic property and a
local property of the sequence respectively. We shall make use of analagous
conditions for the Poisson line tessellation, and it is for this reason that we
motivate the different cases concerning spatially separated and spatially
close balls in Proposition~\ref{Prop:uniformupperbound}.
\begin{Prop}\label{Prop:uniformupperbound}
Consider a collection of $K$ disjoint balls, $B(z_{i}, r_{i})$ for
$i=1,\dots,K$ such that $r_{1:K} > v_\rho$ and $R := \max_{1\le i \le K} r_{i}$.
\begin{enumerate}[(i)]
\item \label{case:uniform1} 
When $n_{1:K}=(K,0,\ldots, 0)$, i.e. $\min_{1\leq i,j\leq K}|z_i-z_j|>R^3$, we
obtain for $\rho$ large enough
\begin{equation} 
\label{Rk:uniform1}
        \linesintersecting
        \bigg(
            \bigcup_{1\leq i\leq K}  
            B
            \left(
                z_{i}, r_{i}
            \right)
        \bigg) 
        \geq 2\pi \sum_{i=1}^{K}r_{i} - c\cdot v_\rho^{-1}.
    \end{equation}
\item \begin{enumerate}[(a)] \label{case:uniformbound0}
\item \label{case:uniformbound1}
   for $\rho$ large enough,
        \begin{equation*}
            \linesintersecting
            \bigg(
                \bigcup_{1\leq i\leq K}
                B
                \left(
                    z_{i}, r_{i}
                \right)
            \bigg)
            \geq 
            2\pi R
            +
            \left(
                \sum_{k=1}^{K}n_k-1 
            \right)
            2\pi v_\rho
            -
            c\cdot v_\rho^{-1},
        \end{equation*}
        \item\label{case:uniformbound2}
        when  $ R \,\le \, (1+\varepsilon)v_\rho$, for some $\varepsilon>0$, we
        have for $\rho$ large enough
        \begin{equation*}            
            \linesintersecting
            \bigg(
                \bigcup_{1\leq i\leq K}
                B
                \left(
                    z_{i}, r_{i}
                \right)
            \bigg) 
            \geq 
            2\pi R
            +
            \left(\sum_{k=1}^{K}n_k-1 \right)2\pi v_\rho
            +
            \sum_{k=2}^K
            n_k(4-\varepsilon\pi)v_\rho
            -
            c\cdot v_\rho^{-1}.
        \end{equation*}
    \end{enumerate}
\end{enumerate}
\end{Prop}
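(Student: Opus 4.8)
The plan is to estimate $\phi\big(\bigcup_i B(z_i,r_i)\big)$ from below by exploiting the Crofton-type identity $\phi(B(z,r)) = 2\pi r$ together with an inclusion–exclusion argument at the level of the line-hitting measure. Recall that for a convex body $\phi$ equals its perimeter, but a union of discs is not convex, so instead I would write, for disjoint collections and using the connected-component decomposition of Section~\ref{sec:connected_components},
\begin{equation*}
    \phi\bigg(\bigcup_{i=1}^K B(z_i,r_i)\bigg)
    \;=\;
    \sum_{k=1}^K\sum_{j=1}^{n_k}\phi\big(B_k^{(j)}\big)
    \;-\;
    \Big(\text{correction for lines hitting two or more components}\Big).
\end{equation*}
The correction term is the obstacle and I address it last; first I handle a single component.

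For case \eqref{case:uniform1}, where all $K$ discs are mutually at distance $>R^3$, the first step is to show the correction is $O(v_\rho^{-1})$. A line hitting two discs $B(z_i,r_i)$ and $B(z_j,r_j)$ with $|z_i - z_j|>R^3$ and both radii $\le R$ must be nearly parallel to the segment $z_iz_j$; the measure of such lines is controlled by the solid angle subtended, which is $O(R/R^3) = O(R^{-2})$ per pair, hence $O(R^{-2}) = O(v_\rho^{-2})$ overall (using $R \ge r_i > v_\rho$ and, if $R$ is large, a separate cruder bound). Summing $\phi(B(z_i,r_i)) = 2\pi r_i$ over $i$ and subtracting this gives \eqref{Rk:uniform1}. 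The key geometric lemma here is the standard estimate that the $\mu$-measure of lines meeting two given balls decays like the inverse of their separation.

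For case \eqref{case:uniformbound0}, I would first prove a lower bound for a \emph{single} connected component $B_k^{(j)}$ of size $k$: namely $\phi(B_k^{(j)}) \ge 2\pi R_k^{(j)} + (k-1)2\pi v_\rho - c\cdot v_\rho^{-1}$, where $R_k^{(j)}$ is the largest radius in that component, and in part (b) the sharper bound with the extra $k(4-\varepsilon\pi)v_\rho$ term when $R\le(1+\varepsilon)v_\rho$. The idea: the lines hitting $B_k^{(j)}$ certainly include all lines hitting the largest disc (contributing $2\pi R_k^{(j)}$), and each additional disc $B_k^{(j)}[r]$, $r\ge 2$, which overlaps the cluster within distance $R^3$, forces a family of lines missing the previously counted discs but hitting this one; a lower bound on this family's measure is obtained by looking at lines tangent to the larger disc but cutting through the smaller one, whose measure is at least $2\pi v_\rho - O(v_\rho^{-1})$ (and, in the bounded-$R$ regime, one gains the geometric $(4-\varepsilon\pi)v_\rho$ by an explicit area/chord computation since the two discs, being within $R^3 \le $ a constant of each other and of comparable radius, must genuinely overlap rather than merely be close). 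Then I sum over all $\sum_k n_k$ components. Across distinct components the separation is $>R^3$ again, so the same decay estimate as in case (i) shows the inter-component correction is $O(v_\rho^{-1})$; collecting the single-component bounds and recalling $\sum_k k\,n_k = K$ yields the stated inequalities, with the largest radius $R = \max_i r_i$ appearing once (from whichever component contains it) and every other disc contributing at least $2\pi v_\rho$.

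The main obstacle is making the single-component lower bound quantitatively correct, in particular getting the right constant $2\pi v_\rho$ per extra disc and the improvement $(4-\varepsilon\pi)v_\rho$ in the bounded regime: this requires carefully isolating, for each disc added to a component, a set of lines that hit it but are disjoint from the union of all the discs already accounted for, and bounding that set's $\mu$-measure below. The overlap relation \eqref{eq:rel_connected_components} uses $R^3$ precisely so that "connected" still allows the actual discs (of radius $\le R$) to be far apart, which is what keeps the cross terms negligible while permitting the clean additive lower bound; reconciling these two scales is the delicate point of the argument.
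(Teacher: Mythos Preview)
Your approach to case~\eqref{case:uniform1} and the inter-component cross terms is essentially that of the paper: Bonferroni on $\mu(\bigcup_i \mathcal{A}(B_i))$, with the pairwise terms at distance $>R^3$ controlled by a two-ball estimate (Lemma~\ref{Le:2ballscase}\eqref{Le:2ballscase:case^{(1)}} in the paper). One small correction: the $\mu$-measure of lines through two balls of radius $\le R$ at distance $>R^3$ is of order $R^{-1}$, not $R^{-2}$ (the angular window is $O(R/R^3)$, but you must multiply by the width $O(R)$ in the $t$-variable). This does not affect the conclusion.

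The genuine gap is in your treatment of a single component. You claim
\[
\phi\big(B_k^{(j)}\big)\;\ge\;2\pi R_k^{(j)}+(k-1)\,2\pi v_\rho - c\,v_\rho^{-1},
\]
i.e.\ a contribution of $2\pi v_\rho$ per \emph{disc}. This is false. Take $k=2$ disjoint discs of radius $v_\rho$ with centres at distance just above $2v_\rho$. Then $\phi(B_1\cup B_2)=4\pi v_\rho-\mu(\mathcal{A}(B_1)\cap\mathcal{A}(B_2))$, and the explicit formula (or Lemma~\ref{Le:2ballscase}) gives the intersection measure $\approx 2\pi v_\rho-4v_\rho$, so $\phi(B_1\cup B_2)\approx 2\pi v_\rho+4v_\rho$, which is strictly less than your claimed $4\pi v_\rho$. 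The same obstruction kills the refined per-disc gain ``$k(4-\varepsilon\pi)v_\rho$'' you propose for part~(b). The difficulty you yourself flag at the end---isolating, for each additional disc in a component, a set of lines disjoint from all previously counted ones of measure $\ge 2\pi v_\rho$---is not a delicate point; it is simply not possible in general.

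The resolution is that the proposition asks for much less: the bound in \eqref{case:uniformbound1} involves $\sum_k n_k$, the number of \emph{components}, not $K$. The paper's argument (Lemma~\ref{Le:generalcase}) accordingly uses only the largest ball in each component, $\phi(B_k^{(j)})\ge\phi(B_k^{(j)}[1])=2\pi r_k^{(j)}[1]$; summing over components and subtracting the $O(v_\rho^{-1})$ cross terms gives $\sum_{k,j}2\pi r_k^{(j)}[1]-c\,v_\rho^{-1}$, and then one observes that one of the $r_k^{(j)}[1]$ equals $R$ while the rest exceed $v_\rho$. For part~(b) the paper uses the \emph{two} largest balls in each component of size $\ge 2$ together with Lemma~\ref{Le:2ballscase}\eqref{Le:2ballscase:case^{(2)}}, which yields exactly one extra $(4-\varepsilon\pi)v_\rho$ per such component---again matching the $\sum_{k\ge 2}n_k$ in the statement. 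Rewrite your single-component step accordingly and the rest of your outline goes through.
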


\begin{Rk}
Suppose that $n_{1:K}=(K,0,\ldots, 0)$.
\begin{enumerate}
\item We observe that \eqref{Rk:uniform1} is quasi-optimal since we also have
    \begin{equation}
\label{Rk:uniform2}
    \linesintersecting
    \bigg(
        \bigcup_{1\leq i\leq K} B
        \left(
            z_{i}, r_{i}
        \right)
    \bigg)
    \quad
    \leq
    \quad
    \sum_{i=1}^K\linesintersecting
        \left(
            B
            \left(
                z_{i}, r_{i}
            \right)
        \right)
    \quad
    =
    \quad
    2\pi 
    \sum_{i=1}^{K}r_{i}.
    \end{equation}
\item Thanks to \eqref{Rk:phi}, \eqref{Rk:uniform1} and \eqref{Rk:uniform2}, we
remark that
\[
    \left|
        \PPP{\bigcap_{1\leq i\leq K} 
        \left\{\XX\cap B(z_i,r_i)=\varnothing\right\}} 
        -
        \prod_{1\leq i\leq K}\PPP{\XX\cap B(z_i,r_i)=\varnothing}
    \right| \leq c\cdot v_\rho^{-1}
    \conv[\rho]{\infty}0.
\] 
\end{enumerate} 
\end{Rk}
The fact that the events considered in the probabilities above tend to be
independent is well-known and is related to the fact that the tessellation
$\mosaic$ satisfies a mixing property (see, for example the proof of Theorem
10.5.3 in \citet{SW}.) Our contribution is to provide a \emph{uniform rate of
convergence} (in the sense that it does not depend on the centres and the radii)
when the balls are distant enough (case \eqref{case:uniform1}) and a suitable
\emph{uniform} upper bound for the opposite case (case
\eqref{case:uniformbound0}.) Proposition \ref{Prop:uniformupperbound} will be
used to check \eqref{Hyp:henze}. Before attacking
Proposition~\ref{Prop:uniformupperbound}, we first state two lemmas. The first
of which deals with the case of just two balls.
\begin{Le}\label{Le:2ballscase} 
    Let $z_{1},z_{2}\in\RR^2$ and $R\geq r_{1}\geq r_{2}> v_\rho$  such that
    $|z_{2}-z_{1}|>r_{1}+r_{2}$.
    \begin{enumerate}[(i)]
        \item  \label{Le:2ballscase:case^{(1)}}
        If $|z_{2}-z_{1}|>R^3$, we have for $\rho$ large enough that
        \begin{equation*}
            \measure
            \Big(
                \affinelines(B(z_{1},r_{1})) \cap \affinelines(B(z_{2},r_{2}))
            \Big)
            \leq c\cdot v_\rho^{-1}.
        \end{equation*} 
        \item\label{Le:2ballscase:case^{(2)}}
        If $R\leq (1+\varepsilon )v_\rho$ for some $\varepsilon>0$, then we have
        for $\rho$ large enough that
        \begin{equation*}
            \measure
            \Big(
                \affinelines(B(z_{1},r_{1})) \cap \affinelines(B(z_{2},r_{2})) 
            \Big)
            \leq 2\pi r_{2} 
            -
            (4-\varepsilon\pi)v_\rho
        \end{equation*}
    \end{enumerate}
\end{Le}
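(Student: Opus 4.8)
The plan is to reduce both estimates to a single closed-form identity for the $\measure$-measure of the lines meeting both balls. By motion-invariance of $\measure$ (the intensity measure of a stationary isotropic line process), $\measure\bigl(\affinelines(B(z_1,r_1))\cap\affinelines(B(z_2,r_2))\bigr)$ depends only on $r_1$, $r_2$ and $D:=|z_2-z_1|$, so I would take $z_1=0$, $z_2=(D,0)$. Parametrising a line by its unit normal direction $\phi\in[0,\pi)$ and signed offset $p\in\RR$, the measure $\measure$ becomes Lebesgue measure $d\phi\,dp$ — the normalisation being pinned down by $\measure(\affinelines(B(0,r)))=2\pi r$, i.e. Crofton's formula \eqref{eq:Crofton}. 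Such a line meets $B(0,r_1)$ iff $|p|\le r_1$ and meets $B((D,0),r_2)$ iff $|D\cos\phi-p|\le r_2$; hence, writing $g(x):=\bigl|\,[-r_1,r_1]\cap[x-r_2,x+r_2]\,\bigr|$,
\[
    \measure\bigl(\affinelines(B(z_1,r_1))\cap\affinelines(B(z_2,r_2))\bigr)=\int_0^\pi g\bigl(|D\cos\phi|\bigr)\,d\phi .
\]

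A short case analysis (using $r_1\ge r_2$) gives $g(x)=2r_2$ for $0\le x\le r_1-r_2$, $g(x)=r_1+r_2-x$ for $r_1-r_2\le x\le r_1+r_2$, and $g(x)=0$ for $x\ge r_1+r_2$; equivalently $g(x)=\int_{r_1-r_2}^{r_1+r_2}\ind{s>x}ds$. Substituting this, interchanging the two integrals, and using $\bigl|\{\phi\in[0,\pi):|\cos\phi|<c\}\bigr|=2\arcsin c$ for $c\in[0,1]$ together with $D>r_1+r_2\ge s$, I obtain the working identity
\[
    \measure\bigl(\affinelines(B(z_1,r_1))\cap\affinelines(B(z_2,r_2))\bigr)=2\int_{r_1-r_2}^{r_1+r_2}\arcsin\frac{s}{D}\,ds .
\]
Part (i) is then immediate: as $0\le s\le r_1+r_2\le 2R$, $D>R^3$, and $\arcsin x\le\frac{\pi}{2}x$ on $[0,1]$, the integrand is at most $\frac{\pi}{2}\cdot\frac{2R}{R^{3}}=\pi R^{-2}$, so the left-hand side is at most $2\cdot 2r_2\cdot\pi R^{-2}\le 4\pi R^{-1}\le 4\pi v_\rho^{-1}$ (using $r_2\le R$ and $R>v_\rho$), which is the claim with $c=4\pi$, valid once $R\ge\sqrt{2}$, hence for $\rho$ large.

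For part (ii) I would bound below the measure of lines meeting $B(z_2,r_2)$ but not $B(z_1,r_1)$, which by \eqref{eq:Crofton} equals $2\pi r_2$ minus the quantity above. Since $\arcsin$ is increasing and $D>r_1+r_2$, the identity shows that quantity is at most $2\int_{r_1-r_2}^{r_1+r_2}\arcsin\frac{s}{r_1+r_2}\,ds$; the substitution $s=(r_1+r_2)w$ and $\int\arcsin w\,dw=w\arcsin w+\sqrt{1-w^2}$ evaluate this last integral to $\pi(r_1+r_2)-2(r_1-r_2)\arcsin\frac{r_1-r_2}{r_1+r_2}-4\sqrt{r_1r_2}$, which is $\le\pi(r_1+r_2)-4\sqrt{r_1r_2}$ since the middle term is nonnegative. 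Hence the measure of lines meeting $B(z_2,r_2)$ only is at least $2\pi r_2-\pi(r_1+r_2)+4\sqrt{r_1r_2}=4\sqrt{r_1r_2}-\pi(r_1-r_2)$. Finally, $r_1\ge r_2>v_\rho$ gives $\sqrt{r_1r_2}>v_\rho$, while $r_2>v_\rho$ and $r_1\le R\le(1+\varepsilon)v_\rho$ give $r_1-r_2<\varepsilon v_\rho$, so $4\sqrt{r_1r_2}-\pi(r_1-r_2)>(4-\varepsilon\pi)v_\rho$, which rearranges to the assertion. The one mildly delicate step is the explicit evaluation of $g$ and the justification of the interchange of integrals; once the $\arcsin$ identity is in hand both bounds are routine, the extremal configuration in part (ii) being $D\downarrow r_1+r_2$ with all the slack furnished by $\sqrt{r_1r_2}>v_\rho$.
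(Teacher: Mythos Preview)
Your proof is correct and follows essentially the same route as the paper: both derive the same closed-form expression for the measure of lines meeting two disjoint balls (your integral $2\int_{r_1-r_2}^{r_1+r_2}\arcsin(s/D)\,ds$ evaluates exactly to the paper's $2f(r_1,r_2,D)$), then exploit monotonicity in $D$ to bound at $D=R^3$ for part~(i) and at $D\downarrow r_1+r_2$ for part~(ii), finishing with the same elementary inequalities $\sqrt{r_1r_2}>v_\rho$ and $r_1-r_2<\varepsilon v_\rho$. Your presentation via the unevaluated $\arcsin$ integral is arguably cleaner, since the monotonicity in $D$ is then immediate rather than something that ``may be demonstrated''; otherwise the two arguments are the same.
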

Actually, closed formulas for the measure of all lines intersecting two convex
bodies can be found in \citet{Sa}, p33. However, Lemma~\ref{Le:2ballscase} is
more practical since it provides an upper bound which is independent of the
centres and the radii. The following lemma is a generalisation of the previous
result.
\begin{Le}\label{Le:generalcase}
Let $z_{1:K}\in\RR^{2K}$ and $R$ such that, for all $1\leq i\neq j\leq K$, we
have $R\geq r_i >v_\rho$ and $|z_i-z_j|>r_i+r_j$.
\begin{enumerate}[(i)]
    \item\label{Le:generalcase:case_1} 
        $
        \displaystyle
        \label{eq:generalcase}
        \measure
        \bigg(
            \bigcup_{1\leq i\leq K}
           \affinelines
            \left(
                B
                \left(
                    z_{i}, r_{i}
                \right)
            \right)
        \bigg)
        \geq
        \sum_{k=1}^K\sum_{j=1}^{n_k}2\pi \cdot r_k^{(j)}[1] - c\cdot
        v_\rho^{-1}.
        $
    \item\label{Le:generalcase:case_2}
        If $R\leq (1+\varepsilon)v_\rho$ for some $\varepsilon>0$, we
        have the following more precise inequality
        \begin{equation*}\label{eq:generalcaseepsilon}
            \measure
            \bigg(
                \bigcup_{1\leq i\leq K} 
                 \affinelines
                \left(
                    B
                    \left(
                        z_{i}, r_{i}
                    \right)  
                \right) 
            \bigg)
            \geq
            \sum_{k=1}^K\sum_{j=1}^{n_k}2\pi \cdot r_k^{(j)}[1] 
            +
            \sum_{k=2}^Kn_k(4-\varepsilon\pi)v_\rho - c\cdot
            v_\rho^{-1}.
        \end{equation*}
    \end{enumerate}
\end{Le}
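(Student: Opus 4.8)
The plan is to reduce Lemma~\ref{Le:generalcase} to the two-ball estimates of Lemma~\ref{Le:2ballscase} by organising the computation around the connected components of \eqref{eq:rel_connected_components}. Write $B_i:=B(z_i,r_i)$, so that $\affinelines\big(\bigcup_i B_i\big)=\bigcup_{k,j}\affinelines(B_k^{(j)})$ where $\affinelines(B_k^{(j)})=\bigcup_{i\text{ in the }(k,j)\text{-component}}\affinelines(B_i)$. The Bonferroni inequality applied at the level of components gives
\begin{equation*}
    \measure\bigg(\bigcup_{1\leq i\leq K}\affinelines(B_i)\bigg)
    \;\geq\;
    \sum_{k,j}\measure\big(\affinelines(B_k^{(j)})\big)
    \;-\;
    \sum_{(k,j)\neq(k',j')}\measure\big(\affinelines(B_k^{(j)})\cap\affinelines(B_{k'}^{(j')})\big),
\end{equation*}
and the rest of the argument consists in controlling the two sums.

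For the correction term, any line contributing to $\affinelines(B_k^{(j)})\cap\affinelines(B_{k'}^{(j')})$ meets some ball $B_i$ of the $(k,j)$-component and some ball $B_{i'}$ of the $(k',j')$-component; since these two balls lie in \emph{different} components, the relation \eqref{eq:rel_connected_components} forces $|z_i-z_{i'}|>2R^3$, hence in particular $|z_i-z_{i'}|>R^3$ (and $>r_i+r_{i'}$). A union bound over the (at most $K^2$) such pairs of balls, together with Lemma~\ref{Le:2ballscase}, Part~\eqref{Le:2ballscase:case^{(1)}}, then yields $\measure\big(\affinelines(B_k^{(j)})\cap\affinelines(B_{k'}^{(j')})\big)\leq c\cdot v_\rho^{-1}$; summing over the at most $K^2$ pairs of components, the whole correction term is $O(v_\rho^{-1})$, the constant depending only on $K$.

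It remains to bound each $\measure(\affinelines(B_k^{(j)}))$ from below. For Part~\eqref{Le:generalcase:case_1}, monotonicity and Crofton's formula \eqref{eq:Crofton} give at once $\measure(\affinelines(B_k^{(j)}))\geq\measure(\affinelines(B_k^{(j)}[1]))=2\pi\,r_k^{(j)}[1]$, which is exactly what is needed. For Part~\eqref{Le:generalcase:case_2}, fix a component of size $k\geq 2$ and retain only its two largest balls $B_k^{(j)}[1]$, $B_k^{(j)}[2]$; these are disjoint by hypothesis and satisfy $R\geq r_k^{(j)}[1]\geq r_k^{(j)}[2]>v_\rho$, so (using $R\leq(1+\varepsilon)v_\rho$) Lemma~\ref{Le:2ballscase}, Part~\eqref{Le:2ballscase:case^{(2)}}, applies and, combined with $\measure(A\cup B)=\measure(A)+\measure(B)-\measure(A\cap B)$ and Crofton's formula,
\begin{align*}
    \measure\big(\affinelines(B_k^{(j)})\big)
    &\geq \measure\big(\affinelines(B_k^{(j)}[1])\cup\affinelines(B_k^{(j)}[2])\big)\\
    &= 2\pi\,r_k^{(j)}[1]+2\pi\,r_k^{(j)}[2]-\measure\big(\affinelines(B_k^{(j)}[1])\cap\affinelines(B_k^{(j)}[2])\big)\\
    &\geq 2\pi\,r_k^{(j)}[1]+(4-\varepsilon\pi)v_\rho.
\end{align*}
Components of size $1$ still contribute $2\pi\,r_k^{(j)}[1]$, so summing over all $\sum_k n_k$ components produces the extra term $\sum_{k=2}^K n_k(4-\varepsilon\pi)v_\rho$ and gives Part~\eqref{Le:generalcase:case_2}.

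The computation is essentially bookkeeping once Lemma~\ref{Le:2ballscase} is in hand; the only genuinely delicate point is the passage through connected components in the first display — one must check that the intersection of the line-sets of two distinct components really decomposes into pairwise intersections of single-ball line-sets to which Lemma~\ref{Le:2ballscase} applies, and it is here that the inflation to radius $R^3$ in \eqref{eq:rel_connected_components} does its work, guaranteeing $|z_i-z_{i'}|>R^3$ whenever $B_i$ and $B_{i'}$ lie in distinct components.
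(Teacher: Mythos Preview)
Your argument is correct and follows essentially the same route as the paper: Bonferroni at the level of connected components, control of the cross-component intersections via Lemma~\ref{Le:2ballscase}\eqref{Le:2ballscase:case^{(1)}} (using that balls in distinct components have centres farther apart than $R^3$), the trivial lower bound $\measure(\affinelines(B_k^{(j)}))\geq 2\pi\,r_k^{(j)}[1]$ for Part~\eqref{Le:generalcase:case_1}, and the two-ball refinement via Lemma~\ref{Le:2ballscase}\eqref{Le:2ballscase:case^{(2)}} for Part~\eqref{Le:generalcase:case_2}. The only cosmetic difference is that you make explicit the inequality $|z_i-z_{i'}|>2R^3$ for balls in distinct components, which the paper leaves implicit.
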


\subsubsection{Proofs}
\begin{proof}[Proof of Proposition~\ref{Prop:uniformupperbound}.] The proof of
\eqref{case:uniform1} follows immediately from \eqref{defphi} and Lemma
\ref{Le:generalcase}, \eqref{Le:generalcase:case_1}. Using the fact that
$r_k^{(j)}[1]>v_\rho$ for all $1\leq k\leq K$ and $1\leq j\leq n_k$ such that
$r_k^{(j)}[1]\neq R$, we obtain  \eqref{case:uniformbound1} and
\eqref{case:uniformbound2} from Lemma \ref{Le:generalcase},
\eqref{Le:generalcase:case_1} and \eqref{Le:generalcase:case_2} respectively.
\end{proof}

\begin{proof}[Proof of Lemma~\ref{Le:2ballscase}.]
As previously mentioned, \citet{Sa} provides a general formula for the
measure of all lines intersecting two convex bodies. However, to obtain a more
explicit representation of $\measure ( \affinelines(B(z_{1},r_{1})) \cap
\affinelines(B(z_{2},r_{2})))$, we re-write his result in the particular
setting of two balls. According to \eqref{def:mu} and the fact that $\mu$ is
invariant under translations, we obtain with standard computations that
\begin{align*}
    \measure
    \Big(
        \affinelines(B(z_{1},r_{1})) \cap \affinelines(B(z_{2},r_{2})) 
    \Big)
    &=
    \int_{\SSS}
    \int_{\RR_+}
    \ind{H(u,t)\cap B(0,r_{1})
    \neq
    \varnothing}
    \ind{H(u,t) \cap B(z_{2}-z_{1},r_{2})
    \neq
    \varnothing } 
    dt \spheremeasure (du)
    \\
    & = \int_{\SSS} \int_{\RR_+}\ind{t<r_1}\ind{d(z_2-z_1, H(u,t))<r_2}dt\spheremeasure(du)\\
    & = \int_{[0,2\pi)} \int_{\RR_+}\ind{t<r_1}\ind{|\cos\alpha\cdot |z_2-z_1|-t|<r_2}dt d\alpha\\
    & = 2\cdot f(r_{1},r_{2},|z_{2}-z_{1}|),
\end{align*} 
where
\begin{align*} 
    &f(r_{1},r_{2},h)\\ 
    &\quad
    := (r_{1}+r_{2})
    \arcsin
    \left(
        \tfrac{r_{1}+r_{2}}{h}
    \right)
    -
    (r_{1}-r_{2})
    \arcsin 
    \left(
        \tfrac{r_{1}-r_{2}}{h}
    \right)
    -
    h
    \left(
        \sqrt{1-\left(\tfrac{r_{1}-r_{2}}{h} \right)^2}
        -
        \sqrt{1-\left(\tfrac{r_{1}+r_{2}}{h} \right)^2}
        \;
    \right)
\end{align*} 
    for all $h>r_{1}+r_{2}$. It may be demonstrated
    that the function 
    $
        f_{r_1,r_2}\colon (r_{1}+r_{2},\infty)\to \RR_+,\; 
        h \mapsto f(r_{1},r_{2}, h)
    $
    is positive, strictly decreasing and converges to zero as $h$ tends to
    infinity. We now consider each of the two cases given above.
    \medskip \\
    \textit{Proof of \eqref{Le:2ballscase:case^{(1)}}.}
    Suppose that $|z_{2}-z_{1}|>R^3$. Using the inequalities, 
    \begin{equation*}    
        r_{1}+r_{2}\leq 2R,
        \quad  
        \arcsin
        \big(
            (r_{1}+r_{2})/(|z_{2}-z_{1}|)
        \big)
        \leq
        \arcsin( 2/R^2 ),
        \quad
        r_1\geq r_2
    \end{equation*}
    we obtain for $\rho$ large enough that,
    \begin{equation*}
        f(r_{1},r_{2},|z_{2}-z_{1}|) 
        \;\;<\;\;
        f(r_{1},r_{2},R^3) 
        \;\;\leq \;\;
        4R\arcsin
        \left(
            \tfrac{2}{R^2}
        \right)
        \;\;\leq\;\;
        c\cdot R^{-1}\leq c\cdot v_{\rho}^{-1}.
    \end{equation*}
    \textit{Proof of \eqref{Le:2ballscase:case^{(2)}}.}
    Suppose that $R\leq (1+\varepsilon)v_\rho$. Since $|z_{2}-z_{1}|>r_{1}+r_{2}$, we get
    \begin{equation*}
        f(r_{1},r_{2},|z_{2}-z_{1}|) 
        \;\;<\;\;
        f(r_{1},r_{2},r_{1}+r_{2}) 
        \;\;=\;\;
        2\pi r_{2}+2(r_{1}-r_{2})
        \arccos\left(\tfrac{r_{1}-r_{2}}{r_{1}+r_{2}} \right) 
        -
        4\sqrt{r_{1}r_{2}}.        
    \end{equation*}
    Using the inequalities,
    \begin{equation*}
        r_{1}\geq r_{2}> v_\rho,
        \quad
        \arccos\left(\tfrac{r_{1}-r_{2}}{r_{1}+r_{2}} \right)\leq \frac{\pi}{2},
        \quad
        r_{1}\leq R\leq (1+\varepsilon)v_\rho,        
    \end{equation*}
    we have
    \begin{equation*}
        f(r_{1},r_{2},|z_{2}-z_{1}|) 
        \;\;<\;\;
        2\pi r_{2} + (r_{1}-v_\rho)\pi - 4v_\rho
        \;\;\leq\;\;
        2\pi r_{2} 
        -
        (4-\varepsilon\pi)v_\rho.        
    \end{equation*}
\end{proof}
\begin{proof}[Proof of Lemma~\ref{Le:generalcase}~\eqref{Le:generalcase:case_1}]
Using the notation defined in Section \ref{sec:connected_components}, we obtain
from Bonferroni inequalities
\begin{align}\label{eq:generalcase1}
        \measure
        \bigg(
            \bigcup_{1\leq i\leq K}
            \affinelines
            \big(
                B
                \big(
                    z_{i}, r_{i}
                \big)
            \big)
        \bigg)
        & =
        \measure
        \bigg(
            \bigcup_{k\leq K} \bigcup_{j\leq n_k}
             \affinelines
            \big(
                B_k^{(j)} 
            \big) 
        \bigg) 
        \notag
        \\ 
        &\geq
        \sum_{k=1}^K \sum_{j=1}^{n_k}
        \measure
        \left(
             \affinelines
            \big(
                B_k^{(j)}
            \big)
        \right) 
        \;-\!
        \sum_{(k_1,j_1) \neq (k_2,j_2)}
        \measure
            \left(
                 \affinelines
                \big(
                    B_{k_1}^{(j_1)} 
                \big) 
            \cap
             \affinelines
            \big(
                B_{k_2}^{(j_2)} 
            \big)
        \right).
\end{align}
We begin by observing that for all $1\leq k_1\neq k_2\leq K$ and $1\leq j_1\leq
n_{k_1}$, $1\leq j_2\leq n_{k_2}$ we have
\begin{equation}\label{eq:generalcase2}
    \measure
    \left(
         \affinelines
        \left(
            B_{k_1}^{(j_1)}
        \right) 
        \cap 
         \affinelines
        \left(
            B_{k_2}^{(j_2)} 
        \right) 
    \right)
    \;\leq\; 
    \sum_{1\leq\ell_1\leq k_1, 1\leq\ell_2\leq k_2}\,
    \measure
    \left(
         \affinelines
        \left(
            B_{k_1}^{(j_1)}[\ell_1] 
        \right)
        \cap 
         \affinelines
        \left(
            B_{k_2}^{(j_2)}[\ell_2]
        \right) 
    \right)
    \;\leq\;
    c\cdot v_\rho^{-1}  
\end{equation}
when $\rho$ is sufficiently large, with the final inequality following directly
from Lemma~\ref{Le:2ballscase}, \eqref{Le:2ballscase:case^{(1)}} taking $r_1 :=
r_{k_1}^{(j_1)}[\ell_1]$ and $r_2 := r_{k_2}^{(j_2)}[\ell_2]$. In addition,
\begin{equation}\label{eq:generalcase3}
    \measure
    \left(
         \affinelines
        \big(
            B_k^{(j)} 
        \big)
    \right)
    \;\geq\;
    \measure
    \left(
         \affinelines
        \big(
            B_k^{(j)}[1]
        \big) 
    \right) 
    \;=\;
    2\pi
    \cdot
    r_k^{(j)}[1].
\end{equation}
We then deduce~\eqref{Le:generalcase:case_1} from~\eqref{eq:generalcase1},
\eqref{eq:generalcase2} and \eqref{eq:generalcase3}.
\end{proof}
\smallskip
\begin{proof}[Proof of Lemma~\ref{Le:generalcase}~\eqref{Le:generalcase:case_2}]
We proceed along the same lines as in the proof of~\eqref{Le:generalcase:case_1}.
The only difference concerns the lower bound for 
$
    \measure( \affinelines(B_k^{(j)}) )
$.
We shall consider two cases. For each of the $n_1$ clusters of size one, we have
$
    \measure(\affinelines(B_1^{(j)} ) ) = 2\pi r_1^{(j)}[1]
$.
Otherwise, we obtain
\begin{equation*}
    \begin{split}
        \measure
            \left(
                 \affinelines
                \big(
                    B_k^{(j)} 
                \big) 
            \right) 
        &=
        \measure
        \bigg( 
            \bigcup_{\ell=1}^{k}
             \affinelines
            \big(
                B_k^{(j)}[\ell]
            \big)  
        \bigg)
        \\
        &
        \geq 
        \measure
        \left(
             \affinelines
            \big(
                B_k^{(j)}[1]
            \big)
            \cup 
             \affinelines
            \big(
                B_k^{(j)}[2]
            \big) 
        \right)
        \\
        &=
       2\pi r_k^{(j)}[1]+2\pi r_k^{(j)}[2]
      -  
        \measure
        \left(
             \affinelines
            \big(
                B_k^{(j)}[1]
            \big)
            \cap
             \affinelines
            \big(
                B_k^{(j)}[2]
            \big) 
        \right)
        \\
        &\geq 
        2\pi\cdot r_k^{(j)}[1]
        +
        (4-\varepsilon\pi)
        v_\rho
    \end{split}
\end{equation*} which follows from  Lemma~\ref{Le:2ballscase},
\eqref{Le:2ballscase:case^{(2)}}. We then deduce~\eqref{Le:generalcase:case_2}
from the previous inequality,
\eqref{eq:generalcase1} and \eqref{eq:generalcase2}.
\end{proof}
\section{Asymptotics for cells with large inradii}
\label{sec:maxins}
We begin this section by introducing the following notation. Let $t\geq 0$, be
fixed.
\subsubsection*{Notation}
\begin{itemize}
\item We shall denote the \emph{threshold} and the mean number of cells having
an inradius larger than the threshold respectively as
\begin{equation}\label{eq:ddeftau}
    v_\rho
    \;
    :=
    \;    
    v_\rho(t)    
    \;
    :=
    \;    
    \frac{1}{2\pi}
    \big(
        \log(\pi\rho) + t
    \big)
    \qquad
    \text{and}
    \qquad
    \tau
    \;
    :=
    \;
    \tau(t)    
    \;
    :=
    \;    
    e^{-t}.
\end{equation}
\item For any $K\geq 1$ and for any $K$-tuple of convex bodies $C_1,\ldots, C_K$
such that each $C_i$ has a unique inball, define the events
\begin{align}
    \label{def:E}
    \sumevent{C_{1:K}}
    &:=
    \Big\{
        \,
        \min_{1\leq i\leq K} R(C_{i}) \geq \vv
        ,
        \;    
        R\left(C_{1}\right)  = \max_{1\leq i\leq K} C_{i}
        \,
    \Big\},
    \\
    \label{def:Ecircle}
    \emptyevent{C_{1:K}}
    &:=
    \Big\{
        \,
        \forall 1\leq i\neq j\leq K,\,
        B(C_i) \cap B(C_j) = \varnothing
        \,
    \Big\}.
\end{align}
\item For any $K\geq 1$, we take
\begin{equation}\label{eq:defGK}
    I^{(K)}(\rho) 
    :=
    K
    \EEE{
        \sum_{
            \substack{
            C_{1:K}\in(\mosaic)^K_{\neq},\\ 
            z(C_{1:K})\in \window^K}
        }
        \ind{ \sumevent{C_{1:K}} }
    }.
\end{equation}
\end{itemize}
The proof for Theorem~\ref{Th:maxins}, Part~\eqref{case:maxins}, will then
follow by applying Lemma~\ref{Le:henze} and showing that $I^{(K)}(\rho)
\rightarrow \tau^k$ as $\rho \rightarrow \infty$, for every fixed $K \geq 1$. To
begin, we observe that $I^{(1)}(\rho) \rightarrow \tau$ as $\rho \rightarrow
\infty$ as a consequence of \eqref{eq:typicalinradius}
and \eqref{eq:ddeftau}. The rest of this section is devoted to considering the
case when $K\geq 2$. Given a $K$-tuple of cells $C_{1:K}$ in $\mosaic$, we use
$L(C_{1:K})$ to denote the number lines of $\XX$ (without repetition) which
intersect the inballs of the cells. It follows that $3\leq L(C_{1:K})\leq 3K$
since the inball of every cell in $\mosaic$ intersects exactly three lines
(almost surely.) We shall take 
\begin{equation*}
    \left\{H_{1}, \ldots, H_{L(C_{1:K})}\right\}
    \;
    :=
    \;
    \left\{H_{1}(C_{1:K}), \ldots, H_{L(C_{1:K})}(C_{1:K})\right\}
\end{equation*}
to represent the set of lines in $\XX$ intersecting the inballs of the cells
$C_{1:K}$. We remark that conditional on the event $L(C_{1:K}) = 3K$, none of
the inballs of the cells share any lines in common. To apply the bounds we
obtained in Section~\ref{sec:uniform_upper_bound}, we will split the cells up
into clusters based on the proximity of their inballs using the procedure
outlined in Section~\ref{sec:connected_components}. In particular, we define
\begin{equation*}
    n_{1:K}(C_{1:K}) := n_{1:K}(z(C_{1:K}), R(C_1)).
\end{equation*}
We may now re-write $I^{(K)}(\rho)$ by summing over events conditioned on the
number of clusters of each size and depending on whether or not the inballs
of the cells \emph{share} any lines of the process,
\begin{equation} \label{eq:defGK123}
    I^{(K)}(\rho) 
    =
    K
    \sum_{n_{1:K}\in\cN_{K}}
    \left(
        I_{S^\text{c}}^{(n_{1:K})}(\rho)+I_{S}^{(n_{1:K})}(\rho) 
    \right), 
\end{equation}
where the size of each cluster of size $k$ is represented by a tuple contained
in
\begin{equation*}
    \cN_{K}
    :=
    \Big\{
        \,
        n_{1:K}
        \in
        \NN^K
        :
        \sum_{k=1}^K
        k
        \cdot
        n_k
        =
        K
        \,
    \Big\},    
\end{equation*}
and where for any $n_{1:K}\in\cN_K$ we write
\begin{align}\label{eq:defG1}
    I_{S^\text{c}}^{(n_{1:K})}(\rho)
    &
    :=
    \EEE{
        \sum_{  
            \substack{
                C_{1:K}\in(\mosaic)^K_{\neq},\\  
                z(C_{1:K})\in\window^K  
            }
        }
        \ind{
            \sumevent{C_{1:K}}
        }
        \ind{n_{1:K}(C_{1:K}) = n_{1:K}} 
        \ind{L(C_{1:K}) = 3K}
    },
    \\
    \label{eq:defG2}
    I_{S}^{(n_{1:K})}(\rho)
    &
    :=
    \EEE{
        \sum_{
            \substack{
                C_{1:K}\in(\mosaic)^K_{\neq},\\
                z(C_{1:K})\in\window^K  
            }
        }
        \ind{
            \sumevent{C_{1:K}}
        }
        \ind{
            n_{1:K}(C_{1:K}) = n_{1:K}
        }
        \ind{
            L(C_{1:K}) < 3K
        }
    }.
\end{align}
The following proposition deals with the asymptotic behaviours of these functions.
\begin{Prop}\label{Prop:G123}
Using the notation given in~\eqref{eq:defG1} and~\eqref{eq:defG2}, 
\begin{enumerate}[(i)]
    \item\label{eqref:convG1}%
    $
        \displaystyle
        I_{S^\text{c}}^{(K,0,\ldots, 0)}(\rho) \conv[\rho]{\infty} \tau^K
    $, 
    \item \label{eqref:convG2}%
    for all 
    $
        \displaystyle
        n_{1:K}\in\cN_K \setminus \{(K,0,\dots, 0)\}$, we have $I_{S^\text{c}}^{(n_{1:K})}(\rho)
        \conv[\rho]{\infty} 0
    $,
    \item\label{eqref:convG3}%
    for all 
    $
        \displaystyle
        n_{1:K}\in\cN_K$, we have $I_{S}^{(n_{1:K})}(\rho) \conv[\rho]{\infty} 0
    $.

\end{enumerate}
\end{Prop}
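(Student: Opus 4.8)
The plan is to obtain all three parts from one integral representation of \eqref{eq:defG1}--\eqref{eq:defG2}. First, by the multivariate Mecke--Slivnyak formula (Corollary~3.2.3 in \citet{SW}) a $K$-tuple of distinct cells of $\mosaic$ is encoded by the (at most $3K$) lines of $\XX$ tangent to their inballs, and the requirement that a triple $H^{(\ell)}_{1:3}$ bound a cell with inball $B_\ell:=B(H^{(\ell)}_{1:3})$ is the requirement that $\XX$ avoid $\bigcup_\ell B_\ell$, which by \eqref{Rk:phi} produces the weight $e^{-\phi(\bigcup_\ell B_\ell)}$. Second, a Blaschke--Petkantschin change of variables applied to each independent triple (as in \eqref{eq:explicitcell} and the proof of Proposition~\ref{Prop:mintriangle}) turns $\mu^3(dH^{(\ell)}_{1:3})$ into a density proportional to $a(u^{(\ell)}_{1:3})$, with new variables the incentre $z_\ell$, the inradius $r_\ell$ and the tangency directions $u^{(\ell)}_{1:3}\in\SSS^3$. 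Before the case split I would record one reduction used throughout: the contribution of $\{R(C_1)>(1+\varepsilon)v_\rho\}$ to each of these integrals is $o(1)$. Indeed, bounding $\phi(\bigcup_\ell B_\ell)$ below by $2\pi$ times the sum over the clusters of their largest radius (Lemma~\ref{Le:generalcase}, \eqref{Le:generalcase:case_1}) and integrating out the radii, the decay $e^{-2\pi r_1}$ on $\{r_1>(1+\varepsilon)v_\rho\}$ dominates every power of $r_1$ produced by the centre-proximity constraints, leaving a bound $O(\rho^{-\varepsilon}(\log\rho)^{c})$. Hence from now on I may freely insert $\ind{r_\ell\le(1+\varepsilon)v_\rho}$, so that all radii lie in $[v_\rho,(1+\varepsilon)v_\rho]$.

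\textbf{Parts \eqref{eqref:convG1} and \eqref{eqref:convG2}.}
When $n_{1:K}=(K,0,\ldots,0)$ the inballs are pairwise $R^3$-separated, so Proposition~\ref{Prop:uniformupperbound}, \eqref{case:uniform1}, together with the subadditive bound $\phi(\bigcup_\ell B_\ell)\le\sum_\ell\phi(B_\ell)=2\pi\sum_\ell r_\ell$, gives $e^{-\phi(\bigcup_\ell B_\ell)}=(1+o(1))e^{-2\pi\sum_\ell r_\ell}$ uniformly in the configuration. The integral then nearly factorizes: the $z_\ell$-integrals give $\rho^K$ up to the separation constraint, which only removes an $O(\rho^{-1}(\log\rho)^{6})$ fraction since $R^3=O((\log\rho)^3)$ while $\lambda_2(\window)=\rho$; each $u^{(\ell)}$-integral equals $\int_{\SSS^3}a(u_{1:3})\,\spheremeasure(du_{1:3})=48\pi^2$; and the $r_\ell$-integrals, together with the $\max$-selection in \eqref{def:E} and the prefactor $K$ in \eqref{eq:defGK}, combine to $e^{-2\pi K v_\rho}$ with the right combinatorial constant, so the product collapses to $\tau^K$ (consistently with $I^{(1)}(\rho)\to\tau$). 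When $n_{1:K}\ne(K,0,\ldots,0)$ there is at least one cluster of size $\ge 2$; I would instead use Proposition~\ref{Prop:uniformupperbound}, \eqref{case:uniformbound2} (legitimate because all radii are $\le(1+\varepsilon)v_\rho$), which contributes an additional factor $e^{-p(4-\varepsilon\pi)v_\rho}$ with $p:=\sum_{k\ge2}n_k\ge 1$. Since $v_\rho\sim\tfrac{1}{2\pi}\log\rho$, this factor is $O(\rho^{-\delta})$ with $\delta:=(4-\varepsilon\pi)/(2\pi)>0$ for $\varepsilon$ small, which beats the $(\log\rho)^{O(K)}$ arising from the $K-m$ non-leading radii (confined to $[v_\rho,(1+\varepsilon)v_\rho]$) and from the $O(v_\rho^{3})$ proximity volumes of the non-leading centres, while the leading powers of $\rho$ cancel through $\rho^{m}e^{-2\pi m v_\rho}=(\pi^{-1}e^{-t})^{m}=O(1)$, where $m:=\sum_k n_k$. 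This yields $I_{S^\text{c}}^{(n_{1:K})}(\rho)\to 0$.

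\textbf{Part \eqref{eqref:convG3}.}
Here some two cells share a tangent line, so $L(C_{1:K})<3K$ and the Mecke integral runs over strictly fewer than $3K$ lines; I would split according to the incidence pattern. In each case the cells not involved in any sharing integrate out to a bounded factor (by the estimates of the previous paragraph, each contributing $O(\tau)$), so matters reduce to showing that the expected number of ordered pairs of distinct exceedance cells whose inballs are tangent to a common line tends to $0$. Parametrizing the five lines of such a pair $C_1,C_2$ by the shared line $H$, the inball data $(z_i,r_i)$ (now tied by $d(z_i,H)=r_i$) and the remaining tangency points, with an explicit bounded Jacobian, the decisive input is the sharp two-ball estimate: the inballs of distinct cells of a tessellation have disjoint interiors, so $|z_1-z_2|>r_1+r_2$, and with $r_{1:2}\le(1+\varepsilon)v_\rho$ Lemma~\ref{Le:2ballscase}, \eqref{Le:2ballscase:case^{(2)}} gives $\phi(B(C_1)\cup B(C_2))\ge 2\pi r_1+(4-\varepsilon\pi)v_\rho$, hence a gain $e^{-(4-\varepsilon\pi)v_\rho}=O(\rho^{-(4-\varepsilon\pi)/(2\pi)})$. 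Since identifying one line costs only $O(\rho^{1/2}(\log\rho)^{c})$ in the spatial integral (a line forced to be tangent to a faraway inball lying in $\window$ being strongly constrained), and $(4-\varepsilon\pi)/(2\pi)>\tfrac12$ for $\varepsilon$ small because $2/\pi>\tfrac12$, the whole expression is $O(\rho^{1/2-(4-\varepsilon\pi)/(2\pi)}(\log\rho)^{c})\to 0$; configurations with more shared lines give lower-dimensional integrals and are controlled in the same way.

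\textbf{The main obstacle.}
The delicate step is Part \eqref{eqref:convG3}: organizing the change of variables for configurations with shared lines (for general $K$, an arbitrary incidence pattern superimposed on the cluster profile) and checking that the dimensional loss from each identified line is always strictly outweighed by the exponential gain of Lemma~\ref{Le:2ballscase}. A secondary nuisance is making the tail reduction and the ``integrate out the unrelated cells'' step uniform in the cluster profile $n_{1:K}$.
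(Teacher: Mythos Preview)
Your treatment of Parts~\eqref{eqref:convG1} and~\eqref{eqref:convG2} is correct and matches the paper's: Mecke--Slivnyak, Blaschke--Petkantschin, the $\varepsilon$-split on $r_1$, and the appropriate case of Proposition~\ref{Prop:uniformupperbound}.

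Part~\eqref{eqref:convG3} has a real gap, and the mechanism you invoke is the wrong one. The ``reduction to pairs'' cannot be made to work as stated: if you retain only $\phi(\cup_\ell B_\ell)\ge\phi(B_1\cup B_2)$ in the exponent, the remaining $K-2$ exceedance cells contribute an uncancelled factor of order $\rho^{K-2}$ from the spatial integration and your bound diverges already for $K=3$; to integrate those cells out to $O(1)$ each you need a lower bound on $\phi$ that sees all $K$ inballs, which brings you back to Proposition~\ref{Prop:uniformupperbound} for the whole configuration, not just the pair. More fundamentally, calling Lemma~\ref{Le:2ballscase}\,\eqref{Le:2ballscase:case^{(2)}} the ``decisive input'' misidentifies what is going on: the $(4-\varepsilon\pi)v_\rho$ gain there comes from the mere disjointness of two inballs and has nothing to do with sharing a tangent line. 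In the key case $n_{1:K}=(K,0,\ldots,0)$ the inballs are $R^3$-separated, and Proposition~\ref{Prop:uniformupperbound}\,\eqref{case:uniform1} already gives the strictly stronger, nearly additive bound $\phi\ge\sum_\ell 2\pi r_\ell-cv_\rho^{-1}$, with no extra exponential room to spare---which is exactly why Part~\eqref{eqref:convG1} gives a \emph{positive} limit under the very same hypotheses on the inballs.

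What actually separates $I_S$ from $I_{S^{\mathrm c}}$ is purely the dimensional deficit $L<3K$. The paper makes this precise by summing over the bipartite configuration graphs $\depgraph\in\depgraphs$ and invoking the appendix Lemma~\ref{Le:volumeintegral}: writing $\{1,\ldots,L\}=\bigsqcup_i e_i^\star$ with $e_i^\star$ the ``fresh'' lines tangent to $B_i$, one integrates the cells out recursively via Lemma~\ref{Le:boundint}, and every cell with $|e_i^\star|<3$ contributes at most $\rho^{1/2}$ rather than $\rho$. Combined with Proposition~\ref{Prop:uniformupperbound}\,\eqref{case:uniformbound0} this yields $O((\log\rho)^{c}\rho^{-1/2})$ when $n_{1:K}=(K,0,\ldots,0)$, and (after the same $\varepsilon$-split as in Part~\eqref{eqref:convG2}) $O((\log\rho)^{c}\rho^{-\varepsilon})$ or $O((\log\rho)^{c}\rho^{-(4-\varepsilon\pi)/(2\pi)})$ for the remaining cluster profiles. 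You hint at this (``identifying one line costs only $O(\rho^{1/2})$'') and correctly flag organising the recursion over an arbitrary incidence pattern as the main obstacle, but the argument as written does not carry it out.
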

The convergences in Proposition~\ref{Prop:G123} can be understood intuitively as
follows. For \eqref{eqref:convG1}, the inradii of the cells behave
as though they are independent, since they are far apart and no line in the
process touches more than one of the inballs in the $K$-tuple (even though two
\emph{cells} in the $K$-tuple may share a line.) For \eqref{eqref:convG2}, we are
able to show that with high probability the inradii of neighbouring cells cannot
simultaneously exceed the level $v_\rho$, due to
Proposition~\ref{Prop:uniformupperbound}, Part~\eqref{case:uniformbound0}. Finally, to
obtain the bound in \eqref{eqref:convG3} we use the fact that the proportion of
$K$-tuples of cells which share at least one line is negligible relative to
those that do not.

\paragraph{The graph of configurations}
\label{bipartite}
For Proposition~\ref{Prop:G123}, Part~\eqref{eqref:convG3}, we will need to
represent the dependence structure between the \emph{cells} whose inballs share
\emph{lines}. To do this, we construct the following \emph{configuration graph}.
For $K \geq 2$ and $L \in \{\,3,\dots, 3K\,\}$, let $\cells := \{\, 1,\dots, K
\,\}$ and $\lines := \{\, 1,\dots, L \,\}$. We consider the bipartite graph
$\depgraph(\cells, \lines, E)$ with vertices $V := V_C \sqcup V_L$ and edges $E
\subset \cells \times \lines$. Let
\begin{equation}
    \depgraphs 
    :=
    \bigcup_{L\leq 3K}
    \Lambda_{K,L},
\end{equation}
where $\Lambda_{K,L}$ represents the collection of all graphs which are 
isomorphic up to relabling of the vertices and satisfying
\begin{enumerate}
    \item $\degree(v) = 3, \forall v \in \cells$,
    \item $\degree(w) \geq 1, \forall w \in \lines$,
    \item 
    $
        \neighbours(v) 
        \neq
        \neighbours(v'),
        \;
        \forall (v,v') \in (\cells)^2_{\neq}.
    $
\end{enumerate}
We shall use $\cells$ to represent the cells and $\lines$ to represent the lines
in a line process, with each graph edge implying that a line intersects the
inball of a cell. The number of such bipartite graphs is finite since
$|\Lambda_{K,L}| \leq 2^{KL}$ so that $|\depgraphs| \leq 3K\cdot 2^{(3K^2)}$. 

\begin{figure}[tbp]
   \begin{center}
        \includegraphics[scale=0.9]{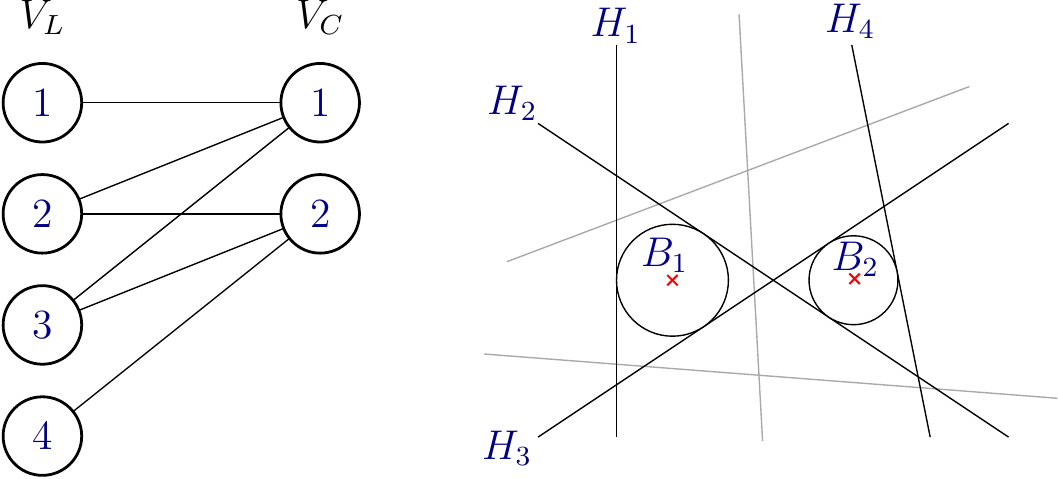}
    \end{center}
   \caption{
       Example of configuration of inballs and lines, with associated
       configuration graph.
        \label{fig:config_graph}
    }
\end{figure}

\subsection*{Proofs}

\begin{proof}[Proof of Proposition~\ref{Prop:G123}~\eqref{eqref:convG1}]
For any $1\leq i\leq K$ and the $3$-tuple of lines
$H_i^{(1:3)}:=(H_i^{(1)},H_i^{(2)},H_i^{(3)})$, we recall that $\convex_i :=
\convex_i(H_i^{(1)}, H_i^{(2)},H_i^{(3)})$ denotes the unique triangle that can
be formed by the intersection of the half-spaces induced by the lines
$H_i^{(1:3)}$. For brevity, we write $B_i := B(\convex_i)$ and
$H_{1:K}^{(1:3)} := (H_1^{(1:3)},\ldots, H_K^{(1:3)})$. We shall often omit the
arguments when they are obvious from context. Since
$\ind{\sumevent{C_{1:K}}}=\ind{\sumevent{B_{1:K}}}$ and since the lines of $\XX$
do not intersect the inballs in their interior, we have
\begin{align*}
    I_{S^\text{c}}^{(K,0,\ldots, 0)}(\rho) 
    &
    =
    \frac{K}{6^K}
    \EE
    \Bigg[
        \sum_{ H_{1:K}^{(1:3)} \in \PHT^{3K}_{\neq} }
        \ind{
            \left\{
                \XX \setminus     \cup_{i\le K, j\le 3  }H_i^{(j)} 
            \right\}
            \cap
            \left \{  \cup_{i\le K} B_i   \right  \} 
            =
            \varnothing 
        }
        \\
        &\qquad
        \times
        \ind{
                z(B_{1:K}) \in \window^K
            }
        \ind{
            \sumevent{B_{1:K}}
        }
        \ind{
            n_{1:K}(B_{1:K})=(K,0,\ldots,0)
        }
    \Bigg]
    \\
    &=
    \frac{K}{6^K}
    \int_{\affinelines^{3K}}
    e^{
        -\linesintersecting
        \left(
            \bigcup_{i\leq K}
            B_i
        \right)
    }
    \ind{
        z(B_{1:K}) \in \window^K
    }
    \ind{
        \sumevent{B_{1:K}}
    }
    \ind{
        n_{1:K}(B_{1:K})=(K,0,\ldots,0)
    }
   \measure\big(dH_{1:K}^{(1:3)}\big),
\end{align*} 
where the last equality comes from \eqref{Rk:phi} and Mecke-Slivnyak's formula.
Applying the Blaschke-Petkantschin formula, we get
\begin{align*}
    I_{S^\text{c}}^{(K,0,\ldots, 0)}(\rho)
    &= 
    \frac{K}{24^K}
    \int_{ (\window\times\RR_+\times \SSS^3)^K }
    e^{
        -\linesintersecting 
        \left(
            \bigcup_{i\leq K} B(z_i, r_i)
            \right)
    } 
    \prod_{i\leq K}
        a
    \big( 
        u_i^{({1:3})} 
    \big)
    \ind{ 
        \sumevent{B_{1:K}} 
    }
    \\
    &\qquad\times
    \ind{
        n_{1:K}(B_{1:K})=(K,0,\ldots,0)
    }       
    dz_{1:K}
    \,
    dr_{1:K}
    \,
    \spheremeasure\big(du_{1:K}^{(1:3)}\big),
\end{align*} 
where we recall that $a(u_i^{({1:3})})$ is the area of the triangle spanned by
$u_i^{(1:3)}\in\SSS^3$. From \eqref{Rk:uniform1} and \eqref{Rk:uniform2}, we
have for any $1\leq i\leq K$,
\begin{equation*}
    e^{-2\pi \sum_{i=1}^Kr_i}
    \cdot
    \ind{E_{B_{1:K}}}
    \quad
    \leq
    \quad
    e^{
        -\linesintersecting 
        \left(
            \bigcup_{i\leq K} B(z_i, r_i)
        \right)
    }
    \cdot
    \ind{E_{B_{1:K}}}
    \quad
    \leq
    \quad
    e^{-2\pi \sum_{i=1}^Kr_i}
    \cdot
    e^{c\cdot{v_{\rho}^{-1}}}
    \cdot
    \ind{E_{B_{1:K}}}.
\end{equation*}
According to \eqref{def:E}, this implies that 
\begin{align*}   
    I_{S^\text{c}}^{(K,0,\ldots, 0)}(\rho) 
    & 
    \eq[\rho]{\infty}\frac{K}{24^K}
    \int_{
        (\window\times\RR_+\times \SSS^3)^K
    }
    \prod_{i\leq K}
        e^{-2\pi\cdot r_i}
    a
    \big(
        u_{i}^{(1:3)}
    \big)
    \ind{r_i>v_\rho}
    \ind{r_1=\max_{j\leq K}r_j}\ind{|z_i-z_j|>r_1^3 \text{ for } j\neq i}
    \\
    &
    \qquad
    \times   
    dz_{1:K}
    \,
    dr_{1:K}
    \,
    \spheremeasure\big(du_{1:K}^{(1:3)}\big)
    \\
    & 
    = 
    \frac{K\tau^K}{(24\pi)^K}
    \int_{
        (\WW_1\times\RR_+\times \SSS^3)^K
    }
    \prod_{i\leq K}
        e^{-2\pi\cdot r'_i}
    a
    \big(
        u_{i}^{(1:3)}
    \big)
    \ind{
        r'_1
        =
        \max_{j\leq K}r'_j
    }
     \ind{
        |z'_i-z'_j|
        >
        \rho^{-1/2}{r'}_1^3 
        \text{ for } j\neq i
    }
    \\
    &
    \qquad
    \times
    dz'_{1:K}
    \,
    dr'_{1:K}
    \,
    \spheremeasure\big(du_{1:K}^{(1:3)}\big),
\end{align*}
where the last equality comes from \eqref{eq:ddeftau} and the change of
variables $z'_i=\rho^{-1/2}z_i$ and $r'_i=r_i-v_\rho$. It follows from the
monotone convergence theorem that
\begin{align*}
    I_{S^\text{c}}^{(K,0,\ldots, 0)}(\rho) 
    &
    \eq[\rho]{\infty} \frac{K\tau^K}{(24\pi)^K}
    \int_{
        (\WW_1\times\RR_+\times \SSS^3)^K
    }
    \prod_{i\leq K}
        e^{-2\pi\cdot r_i}
    a
    \left(
        u_{i}^{(1:3)}
    \right)
    \ind{r_1=\max_{j\leq K}r_j}dz_{1:K}dr_{1:K}\spheremeasure(du_{1:K}^{(1:3)})
    \\
    &
    =
    \frac{\tau^K}{(24\pi)^K} 
   \left(\int_{(\WW_1\times\RR_+\times \SSS^3)^K}
    a(u_{1:3})
    e^{-2\pi r}
    dz
    \,
    dr
    \,
    \spheremeasure(du_{1:3}) \right)^K
    \\
    &
    \conv[\rho]{\infty}\tau^K,
\end{align*} 
where the last line follows by integrating over $z,r$ and $u_{1:3}$, and by using
the fact that $\lambda_2(\mathbf{W}_1)=1$ and
$\int_{\SSS^3}a(u_{1:3})\spheremeasure(du_{1:3})=48\pi^2$.

\end{proof}
\begin{proof}[Proof of Proposition~\ref{Prop:G123}~\eqref{eqref:convG2}]
Beginning in the same way as in the proof of~\eqref{eqref:convG1}, we have
\begin{align*}
    I_{S^\text{c}}^{(n_{1:K})}(\rho) 
    &=
    \frac{K}{24^K}
    \int_{
        (\window\times\RR_+\times \SSS^3)^K
    }
    e^{
        -\linesintersecting 
        \left(
            \bigcup_{i\leq K} 
            B(z_i, r_i)
        \right)
    } 
    \prod_{i\leq K}
    a
    \left(
        u_i^{({1:3})}
    \right)
    \ind{
        \sumevent{B_{1:K}} 
    } 
    \ind{
        \emptyevent{B_{1:K}}
    }   
    \\
    &
    \qquad
    \times
    dz_{1:K}\,
    dr_{1:K} \,
    \spheremeasure
    \left(
        du_{1:K}^{(1:3)}
    \right),
\end{align*} 
where the event $\emptyevent{B_{1:K}} $ is defined in  \eqref{def:Ecircle}.
Integrating over $u_{1:K}^{(1:3)}$, we get
\begin{align*}
    I_{S^\text{c}}^{(n_{1:K})}(\rho) 
    &
    =
    c
    \cdot
    \int_{ (\window\times\RR_+)^K }
    e^{
        -
        \linesintersecting
        \left(
            \bigcup_{i\leq K} B(z_i, r_i)
        \right)
    }
    \prod_{i\leq K}   
        \ind{
        \sumevent{B_{1:K}}
    } 
    \ind{ 
        \emptyevent{B_{1:K}} 
    }
    \ind{n_{1:K}(z_{1:K},r_1)=n_{1:K}}    
    dz_{1:K}
    dr_{1:K}
    \\
    &
    =
    I_{S^\text{c},a_\varepsilon}^{
        (n_{1:K})
    }(\rho)
    +
    I_{S^\text{c},b_\varepsilon}^{(n_{1:K})}(\rho),
\end{align*} 
where, for any $\varepsilon>0$, the terms
$I_{S^\text{c},a_\varepsilon}^{(n_{1:K})}(\rho)$ and
$I_{S^\text{c},b_\varepsilon}^{(n_{1:K})}(\rho)$ are defined as the term of the first
line when we add the indicator that $r_1$ is larger than $(1+\varepsilon)
v_\rho$ in the integral and the indicator for the complement respectively. We
provide below a suitable upper bound for these two terms. For
$I_{S^\text{c},a_\varepsilon}^{(n_{1:K})}(\rho)$, we obtain from
Proposition~\ref{Prop:uniformupperbound}~\eqref{case:uniformbound1} that
\begin{align*}
    I_{S^\text{c},a_\varepsilon}^{(n_{1:K})}(\rho)
    &
    \le
    c
    \cdot
    \int_{(\window\times \RR_+)^K}
    e^{
        -
        \left(
            2\pi r_1
            +
            \left(
                \sum_{k=1}^{K}
                n_k-1 
            \right)
            2\pi v_\rho
            -
            c\cdot v_\rho^{-1}
        \right)
    } 
    \ind{
        r_1 > (1+\varepsilon)v_\rho
    } 
    \ind{r_1=\max_{j\leq K} r_j}
    \\
    &
    \qquad
    \times    
    \,
    \ind{
        n_{1:K}(z_{1:K}, r_1) = n_{1:K}
    }
    dz_{1:K}
    \,
    dr_{1:K}.
\end{align*}
Integrating over $r_{2:K}$ and $z_{1:K}$, we obtain
\begin{align}\label{eq:majI2a}
    I_{S^\text{c},a_\varepsilon}^{(n_{1:K})}(\rho)
    &
    \leq
    c
    \cdot
    \int_{(1+\varepsilon)v_\rho}^\infty
    r_1^{K-1} 
    e^{
        - 
        \left(
            2\pi r_1 
            +
            \left(
                \sum_{k=1}^{K} n_k-1 
            \right) 
            2\pi 
            v_\rho 
        \right)
    }
    \notag
    \\
    &
    \qquad
    \times
    \lambda_{dK}
    \Big(
        \big\{
            \,
            z_{1:K}\in\window^K
            :
            n_{1:K}(z_{1:K}, r_1)
            = n_{1:K} 
            \,
        \big\}
    \Big)
    \,
    dr_1.
\end{align} 
Furthermore, for each $n_{1:K}\in\cN_K\setminus\{(K,0,\ldots, 0)\}$, we have
\begin{equation}\label{eq:majI2b}
    \lambda_{dK}
    \Big(
        \big\{
            \,
            z_{1:K}\in\window^K
            :
            n_{1:K}(z_{1:K}, r_1)
            =
            n_{1:K} 
            \,
        \big\}
    \Big)
    \;
    \leq 
    \;
    c
    \cdot
    \rho^{\sum_{k=1}^Kn_k}
    \cdot 
    r_1^{
        6\left(K-\sum_{k=1}^Kn_k \right)
    },
\end{equation} 
since the number of connected components of $\bigcup_{i=1}^KB(z_i,r_1^3)$ equals
$\sum_{k=1}^Kn_k$. It follows from \eqref{eq:majI2a} and \eqref{eq:majI2b} that
there exists a constant $c(K)$ such that
\begin{align*}
    I_{S^\text{c},a_\varepsilon}^{(n_{1:K})}(\rho) 
    &\leq
    c
    \cdot
    \big( 
        \rho
        e^{-2\pi\, v_\rho} 
    \big)^{
        \left(
            \sum_{k=1}^{K}n_k 
        \right)
    }
    e^{2\pi v_\rho}
    \int_{
        (1+\varepsilon)v_\rho
    }^{\infty} 
    r_1^{ c(K) }
    e^{-2\pi r_1}dr_1 
    \\
    &=
    O\left((\log\rho)^{c(K)}\rho^{-\varepsilon} \right),
\end{align*}
according to \eqref{eq:ddeftau}. For $I_{S^\text{c},b_\varepsilon}^{(n_{1:K})}(\rho)$, we
proceed exactly as for $I_{S^\text{c},a_\varepsilon}^{(n_{1:K})}(\rho)$, but this time we
apply the bound given in
Proposition~\ref{Prop:uniformupperbound}~\eqref{case:uniformbound2}. We obtain
\begin{align*}
    I_{S^\text{c},b_\varepsilon}^{(n_{1:K})}(\rho)
    &
    \le
    c
    \cdot
        \big(
        \rho 
        \,
        e^{-2\pi v_\rho}
    \big)^{
        \left(
            \sum_{k=1}^{K}n_k
        \right)
    }    
    e^{        2\pi v_\rho
        -
        \sum_{k=2}^{K}
        n_k(4-\varepsilon\pi)v_\rho
     }   
    \int_{v_\rho}^{ (1+\varepsilon)v_\rho }
    r_1^{ c(K) }
    e^{-2\pi r_1}
    \,
    dr_1 
    \\
    &
    =
    O 
    \left(
        (\log\rho)^c\cdot \rho^{-\frac{4-\varepsilon\pi}{2\pi}}\cdot  
    \right)
\end{align*}
since for all $n_{1:K}\in\cN_K\setminus\{(K,0,\ldots,0)\}$, there exists a $2\le k \le K$ such that $n_k$
is non-zero. Choosing $\varepsilon < \tfrac{4}{\pi}$ ensures that
$I_{S^\text{c},b_\varepsilon}^{(n_{1:K})}(\rho) \rightarrow 0$ as $\rho \rightarrow
\infty$.
\end{proof}
\begin{proof}[Proof of Proposition~\ref{Prop:G123}~\eqref{eqref:convG3}]
Let $\depgraph=\depgraph(V_C,V_L,E)\in\depgraphs$, with $|V_L|=L$ and $|V_C|=K$,
be a bipartite graph as in Page~\pageref{bipartite}. With $\depgraph$, we can
associate a (unique up to re-ordering of the lines) way to construct $K$
triangles from $L$ lines by taking $\cells$ to denote the set of indices of the
triangles, $\lines$ to denote the set of indices of the lines and the edges to
represent intersections between them. Besides, let  $H_1,\ldots, H_{L}$ be an
$L$-tuple of lines. For each $1\leq i\leq K$, let $e_i=\{e_i(0),e_i(1),e_i(2)\}$
be the tuple of neighbours of the $i$\textit{th} vertex in $\cells$. In
particular,
\begin{equation*}
    B_i(\depgraph)
    :=
    B
    \left(
        \convex_i(\depgraph)
    \right)
    \qquad
    \text{and}
    \qquad
    \convex_i(\depgraph)
    :=
    \convex
    \left(
        H_{e_i(0)},
        H_{e_i(1)},
        H_{e_i(2)}
    \right)
\end{equation*}
denote the inball and the triangle generated by the 3-tuple of lines with
indices in $e_i$. An example of this configuration graph is given in
Figure~\ref{fig:config_graph}. According to~\eqref{eq:defG2}, we have
\begin{equation*}
    I_{S}^{(n_{1:K})}(\rho) 
    =
    \sum_{
        \depgraph \in \depgraphs
    }
    I_{S_{\depgraph}}^{(n_{1:K})}(\rho),
\end{equation*}
where  for all $n_{1:K}\in\cN_K$ and $\depgraph \in \depgraphs$, we write
\begin{align}\label{def:IGn} 
    I_{S_{\depgraph}}^{(n_{1:K})}(\rho)
    &
    =
    \EE
    \Bigg[
        \sum_{ H_{1:L} \in \PHT^{L}_{\neq} }
        \ind{
            \left\{ 
                \XX 
                \setminus             
                \cup_{i\le L} 
                H_i
            \right\} 
            \cap 
            \left\{
                \cup_{i\le K} 
                B_i(\depgraph)
            \right\}
            =
            \varnothing 
        } \ind{
            z(B_{1:K}(\depgraph)) \in \window^K
        } \ind{
            \sumevent{B_{1:K}(\depgraph)}
        }
        \ind{
            \emptyevent{B_{1:K}(\depgraph)}
        }
        \notag
        \\
        &
        \qquad
        \times
        \ind{n_{1:K}(B_{1:K}(\depgraph)) = n_{1:K}}
    \Bigg]
    \notag
    \\ 
    &
    = 
    \int_{\affinelines^{|V_L|}}
    e^{
        -
        \linesintersecting
        \left(
            \bigcup_{i\leq K}
            B_i(\depgraph)
        \right)
    }\ind{
        z(B_{1:K}(\depgraph)) \in \window^K
    }
    \ind{
        \sumevent{B_{1:K}(\depgraph)}
    }
    \ind{
        \emptyevent{B_{1:K}(\depgraph)}
    }
    \notag
    \\
    &
    \qquad
    \times
      \ind{n_{1:K}(B_{1:K}(\depgraph)) = n_{1:K}}
    \measure(dH_{1:L}).
\end{align} 
We now prove that $I_{S_{\depgraph}}^{(n_{1:K})}(\rho) \rightarrow 0$ as
$\rho \rightarrow \infty$. Suppose first that $n_{1:K}=(K,0,\ldots, 0)$. In this
case, we obtain from \eqref{def:IGn},
Proposition~\ref{Prop:uniformupperbound}~\eqref{case:uniformbound1} and
\eqref{def:E} and \eqref{def:Ecircle} that
\begin{align}
    I_{S_{\depgraph}}^{(K,0,\ldots, 0)}(\rho)  
    &
    \leq 
    c
    \cdot
    \int_{\affinelines^{L}} 
    e^{
        -
        2
        \pi
        \cdot
        (
            R(
                B_1(\depgraph)
            )
            +
            (K-1)
            v_\rho
        )
    }\ind{z(B_{1:K}(\depgraph))\in\window}
    \ind{R(B_1(\depgraph))>v_\rho}
    \notag
    \\
    &
    \qquad
    \times
    \ind{R(B_1(\depgraph))=\max_{j\leq K}R(B_j(\depgraph))}  
    \ind{n_{1:K}(B_{1:K}(\depgraph)) = (K,0,\ldots, 0)} 
    \measure(dH_{1:L})
    \notag
    \\
    \label{eq:lem_vol_integral}
    &
    \leq
    c
    \cdot
    \rho^{\frac{1}{2}}
    \int_{v_\rho}^{\infty}
    r^{c(K)}
    e^{-2\pi r}dr
    \\
    &
    =
    O
    \left(
        (\log\rho)^{c(K)}
        \rho^{-\frac{1}{2}} 
    \right),
    \notag
\end{align}
where  the second inequality of \eqref{eq:lem_vol_integral} is a consequence of
\eqref{eq:ddeftau} and Lemma~\ref{Le:volumeintegral} applied to
$f(r):=e^{-2\pi r}$. Suppose now that $n_{1:K}\in\cN_K\setminus\{(K,0,\dots,
0)\}$. In the same spirit as in the proof of
Proposition~\ref{Prop:G123}~\eqref{eqref:convG2}, we shall re-write
\begin{equation}\label{eq:Igraph}
    I_{S_{\depgraph}}^{(n_{1:K})}(\rho) 
    =
    I_{S_{\depgraph},a_\varepsilon}^{(n_{1:K})}(\rho) 
    +
    I_{S_{\depgraph},b_\varepsilon}^{(n_{1:K})}(\rho)
\end{equation} 
by adding the indicator that $R(B_1(\depgraph))$ is larger than
$(1+\varepsilon)v_\rho$ and the opposite in \eqref{def:IGn}.  For
$I_{S_{\depgraph},a_\varepsilon}^{(n_{1:K})}(\rho)$, we similarly apply
Proposition~\ref{Prop:uniformupperbound}~\eqref{case:uniformbound1} to get
\begin{align}
    I_{S_{\depgraph},a_\varepsilon}^{(n_{1:K})}(\rho)
    &\leq
    c\cdot
    \int_{\affinelines^{L}}
    e^{
        -
        2
        \pi
        \left(
            R(B_1(\depgraph))
            +
            \left(
                \sum_{k=1}^Kn_k-1
            \right)
            v_\rho
        \right)
    }
     \ind{
        z(B_{1:K}(\depgraph))\in\window
    }
    \ind{
        R(B_1(\depgraph))>(1+\varepsilon)v_\rho
    }
    \notag
    \\
    &
    \qquad
    \times
    \ind{
        R(B_1(\depgraph))
        =
        \max_{j\leq K}R(B_j(\depgraph))
    }
    \ind{
        n_{1:K}(B_{1:K}(\depgraph)) 
        =
        n_{1:K}
    }
       \measure(dH_{1:L})
    \notag    
    \\
    \label{eq:Igraph1}
    &
    \leq 
    c
    \cdot
    \left(
        \rho e^{-2\pi v_\rho} 
    \right)^{\sum_{k=1}^Kn_k}
    \cdot
    \rho
    \int_{(1+\varepsilon)v_\rho}^{\infty}
    r^{c(K)}
    e^{-2\pi r}
    dr
    \\
    &=
    O
    \left(
        (\log\rho)^{c(K)}
        \rho^{-\varepsilon}
    \right),
    \notag
\end{align} 
where \eqref{eq:Igraph1} follows by applying Lemma~\ref{Le:volumeintegral}. To
prove that $I_{S_{\depgraph},b_\varepsilon}^{(n_{1:K})}(\rho)$ converges to
zero, we proceed exactly as before but this time applying
Proposition~\ref{Prop:uniformupperbound}~\eqref{case:uniformbound2}. As for
$I_{S^\text{c},b_\varepsilon}^{(n_{1:K})}(\rho)$, we show that
\begin{equation*}
    I_{S^\text{c},b_\varepsilon}^{(n_{1:K})}(\rho) 
    =
    O
    \left(
        (\log\rho)^{c(K)}\rho^{-\frac{4-\varepsilon\pi}{2\pi}} 
    \right)
\end{equation*}
by taking $\varepsilon < \tfrac{4}{\pi}$. This together with \eqref{eq:Igraph}
and \eqref{eq:Igraph1} gives that $I_{S_{\depgraph}}^{(n_{1:K})}(\rho)$
converges to zero for any $n_{1:K}\in\cN_K\setminus\{(K,0,\ldots, 0)\}$.
\end{proof}

\begin{proof}[Proof of Theorem~\ref{Th:maxins}~\eqref{case:maxins}]
According to Lemma~\ref{Le:henze}, it is now enough to show that for all $K \geq
1$, we have $I^{(K)}(\rho) \rightarrow \tau^K$  as $\rho\rightarrow \infty$.
This fact is a consequence of \eqref{eq:defGK123} and Proposition
\ref{Prop:G123}.
\end{proof}

\appendix
\section{Technical lemmas}
The following technical lemmas are required for the proofs of
Proposition~\ref{Prop:mintriangle} and
Proposition~\ref{Prop:G123}~\eqref{eqref:convG3}.
\begin{Le}
\label{Le:boundint}
Let $R, R'>0$ and let $z'\in\RR^d$. 
\begin{enumerate}[(i)]
    \item \label{eq:boundint1} 
    For all $H_1\in\affinelines$, we have
    \begin{equation*}
        G(H_1)
        :=
        \int_{\affinelines^2}\ind{z(H_{1:3})\in B(z',R')}\ind{R(H_{1:3})<R}
        \measure(dH_{2:3})
        \quad
        \leq
        \quad
        c\cdot R\cdot R'\cdot\ind{d(0,H_1)<R+R' }.
    \end{equation*}
    \item \label{eq:boundint2}
    For all $H_1, H_2\in\affinelines$, we have
    \begin{equation*}
        G(H_1,H_2)
        :=
        \int_{\affinelines}\ind{z(H_{1:3})\in B(z',R')}\ind{R(H_{1:3})<R}
        \measure(dH_{3})
        \quad
        \leq
        \quad
        c\cdot (R+R').
    \end{equation*}
\end{enumerate}
\end{Le}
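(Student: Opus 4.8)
The plan is to treat the two parts separately, starting with the simpler Part~\eqref{eq:boundint2}. If $H_1$ and $H_2$ are parallel there is no triangle and $G(H_1,H_2)=0$, so one may assume they meet in a single point. Whenever the integrand does not vanish, the inball $B(z(H_{1:3}),R(H_{1:3}))$ is inscribed in $\triangle(H_{1:3})$ and hence tangent to $H_3$; since its centre lies in $B(z',R')$ and its radius is at most $R$, it is contained in $B(z',R+R')$, and in particular $H_3$ meets $B(z',R+R')$. Bounding the integrand by $\ind{H_3\cap B(z',R+R')\neq\varnothing}$ and integrating over $H_3$, the Crofton identity \eqref{eq:Crofton} gives $G(H_1,H_2)\le \phi(B(z',R+R'))=2\pi(R+R')$, which is the claimed bound with a universal constant, uniformly in $H_1$ and $H_2$.

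For Part~\eqref{eq:boundint1}, first note that the integrand vanishes unless there is a ball of radius at most $R$, centred in $B(z',R')$ and tangent to $H_1$, which forces $d(z',H_1)<R+R'$; so one may restrict to this case, recovering the indicator in the statement (with $z'=0$ in the applications). Next I would carry out an integral-geometric change of variables adapted to the fixed line $H_1$, in the spirit of Blaschke--Petkantschin. Since $\triangle(H_1,H_2,H_3)$ has one side on $H_1$, its inball $B(z,r)$ is tangent to $H_1$ at the orthogonal projection $p$ of $z$ onto $H_1$, so that $z=p+r\nu$, where $\nu$ is the unit normal of $H_1$ directed towards the origin; moreover $H_2$ and $H_3$ are exactly the two tangent lines of $B(z,r)$ other than $H_1$, each encoded by the angle $\alpha_2,\alpha_3\in[0,2\pi)$ of its point of tangency. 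Parametrising $p=p(s)$ by arclength along $H_1$, every admissible pair $(H_2,H_3)$ arises as the image of some $(s,r,\alpha_2,\alpha_3)$; writing $H_i=H(\vec{\alpha_i},|t_i|)$ with $\vec{\alpha_i}=(\cos\alpha_i,\sin\alpha_i)$ and $t_i=\langle p(s)+r\nu,\vec{\alpha_i}\rangle+r$, a direct computation of $\partial(t_2,t_3)/\partial(s,r)$ shows that the Jacobian of this map is bounded by a universal constant.

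It then remains to assemble the estimate. Using $R(H_{1:3})=r$ and $z(H_{1:3})=p(s)+r\nu$, the two indicators in $G(H_1)$ become $\ind{r<R}$ and $\ind{|p(s)+r\nu-z'|<R'}$. Integrating the bounded Jacobian over $(\alpha_2,\alpha_3)\in[0,2\pi)^2$ contributes a universal constant, the integral over $r\in(0,R)$ contributes $R$, and for each fixed $r$ the set $\{\,s:|p(s)+r\nu-z'|<R'\,\}$ is the intersection of the line $H_1$ with a disc of radius $R'$, hence an interval of length at most $2R'$. This gives $G(H_1)\le c\,R\,R'$ and completes the proof. I expect the only delicate point to be setting up the change of variables in Part~\eqref{eq:boundint1} correctly and checking the boundedness of its Jacobian; the remaining steps are elementary region estimates.
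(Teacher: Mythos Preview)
Your proof is correct and follows essentially the same route as the paper: Part~\eqref{eq:boundint2} is identical (bound $d(z',H_3)\le R+R'$ and apply Crofton), and for Part~\eqref{eq:boundint1} both you and the paper set up a Blaschke--Petkantschin change of variables with $H_1$ held fixed and bounded Jacobian, then estimate the remaining region. The only cosmetic difference is that the paper parametrises directly by the incentre $z\in\RR^2$ and bounds $\lambda_2\big(B(z',R')\cap(H_1\oplus B(0,R))\big)\le c\,R\,R'$, whereas you use the equivalent coordinates $(s,r)$ along and across $H_1$; just be aware that the incentre may lie on either side of $H_1$, so $\nu$ should be allowed both orientations (equivalently, let $r$ be signed), which merely doubles the constant.
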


\begin{Le}
\label{Le:volumeintegral}
Let  $n_{1:K}\in\cN_{1:K}$, $f\colon\RR_+\to\RR_+$, $\depgraph=\depgraph(V_C,V_L,E)\in\depgraphs$
with $3\leq L<3K$ and let
\begin{align*}
    F^{(n_{1:K})}
    &
    :=
    \int_{\affinelines^{L}}
    f(R(B_1(\depgraph)))
    \cdot
    \ind{z(B_{1:K}(\depgraph))\in\window}
    \ind{R(B_1(\depgraph))>v'_\rho}
    \ind{R(B_1(\depgraph))
    =
    \max_{j\leq K}
    R(B_j(\depgraph))}
    \\
    &
    \qquad
    \times
    \ind{n_{1:K}(B_{1:K}(\depgraph)) = n_{1:K}}
       \measure(dH_{1:L}),
\end{align*}
where $v'_\rho \rightarrow \infty$. Then for some constant $c(K)$, we have
\begin{equation*}
    F^{(n_{1:K})}
    \leq
    \rho^{\min\left\{\sum_{k=1}^Kn_k, K-\frac{1}{2}\right\}}
    \int_{v'_\rho}^{\infty}r^{c(K)}f(r)dr. 
\end{equation*}
\end{Le}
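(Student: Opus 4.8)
The plan is to peel off the incentre and inradius of $\convex_1(\depgraph)$ by a Blaschke-Petkantschin change of variables and then to estimate the integral over the remaining $L-3$ lines in two complementary ways: a bound organised by the clusters of Section~\ref{sec:connected_components}, which produces the exponent $\sum_k n_k$, and a bound organised by the triangles, which exploits $L<3K$ and produces the exponent $K-\tfrac12$; the lemma then follows by taking the smaller of the two. Concretely, I would first apply the Blaschke-Petkantschin formula (Theorem~7.3.2 in \citet{SW}, in the form used for \eqref{eq:explicitcell}) to the three lines defining $\convex_1(\depgraph)$, replacing them by $(z_1,r_1,u^{(1:3)})\in\window\times\RR_+\times\SSS^3$, where $z_1=z(B_1(\depgraph))$, $r_1=R(B_1(\depgraph))$ and the Jacobian is the bounded area factor $a(u^{(1:3)})$ of finite $\SSS^3$-mass. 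After this step the $z_1$-integral costs at most $\lambda_2(\window)=\rho$, the factor $f(R(B_1(\depgraph)))=f(r_1)$ together with $\ind{R(B_1(\depgraph))>v'_\rho}$ restricts $r_1$ to $(v'_\rho,\infty)$, and $\ind{R(B_1(\depgraph))=\max_jR(B_j(\depgraph))}$ forces $r_j\le r_1$ for all $j$. It then suffices to establish, uniformly in $(z_1,r_1,u^{(1:3)})$, two separate bounds for $F^{(n_{1:K})}$: one of the form $c\,\rho^{\sum_k n_k}\int_{v'_\rho}^{\infty}r^{c(K)}f(r)\,dr$, and one of the form $c\,\rho^{K-1/2}\int_{v'_\rho}^{\infty}r^{c(K)}f(r)\,dr$.

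For the first bound I would integrate the triangles one cluster at a time, with $R=r_1$ in the notation of Section~\ref{sec:connected_components}. Since a connected cluster of size $k\le K$ has graph diameter at most $k-1$ and each of its edges joins two centres at distance at most $2r_1^3$, all incentres of a cluster lie in a common disc of radius $O(r_1^3)$. Hence once the first incentre of a cluster has been integrated (over at most $\window$, contributing at most a factor $\rho$), every further incentre of the cluster is confined to a disc of radius $O(r_1^3)$ and contributes $O(r_1^6)$, while every not-yet-seen line of such a triangle meets a disc of radius $O(r_1^3)$ and hence integrates to $O(r_1^3)$ by Crofton's formula \eqref{eq:Crofton} (or by Lemma~\ref{Le:boundint}). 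With $N:=\sum_k n_k$ clusters and one factor $\rho$ already spent on $z_1$, this gives $F^{(n_{1:K})}\le c\,\rho^{N}\int_{v'_\rho}^{\infty}r^{c(K)}f(r)\,dr$.

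For the second bound I would order the triangles $\convex_1,\dots,\convex_K$ and, for $i\ge2$, let $d_i\in\{0,1,2,3\}$ be the number of lines of $\convex_i(\depgraph)$ not already appearing in $\convex_1(\depgraph),\dots,\convex_{i-1}(\depgraph)$, so that $d_1=3$ and $\sum_i d_i=L\le 3K-1$. If $d_i=3$, a further Blaschke-Petkantschin change of variables integrates the incentre of $\convex_i$ over at most $\window$ and its inradius over $(0,r_1)$, at cost $O(\rho\,r_1)$. If $d_i\in\{1,2\}$, the inball of $\convex_i$ is tangent to at least one already-placed line, so its incentre lies within $r_i\le r_1$ of that line; Lemma~\ref{Le:boundint}, Part~\eqref{eq:boundint1} (for $d_i=2$) or Part~\eqref{eq:boundint2} (for $d_i=1$), applied with $B(z',R')=\window$, $R=r_1$ and the already-placed lines in the role of $H_1$ (and, when $d_i=1$, also $H_2$), then bounds the integral over the $d_i$ new lines by a constant times $\rho^{1/2}$ times a power of $r_1$. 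If $d_i=0$, the triangle $\convex_i(\depgraph)$ is already determined and its indicators contribute at most $1$. Attaching to each $\convex_i$ the $\rho$-exponent $1$, $\tfrac12$ or $0$ accordingly, and maximising the sum under $d_1=3$ and $\sum_i d_i\le 3K-1$ (the optimum being $K-2$ of the triangles $\convex_2,\dots,\convex_K$ with $d_i=3$ and one with $d_i=2$) gives a total $\rho$-exponent of at most $1+(K-2)+\tfrac12=K-\tfrac12$; hence $F^{(n_{1:K})}\le c\,\rho^{K-1/2}\int_{v'_\rho}^{\infty}r^{c(K)}f(r)\,dr$. Combining this with the first bound and replacing $c(K)$ by the larger of the two constants (finite, since $\depgraphs$ and $\cN_K$ are finite) yields Lemma~\ref{Le:volumeintegral}.

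The main obstacle is the second bound: one must verify carefully that a triangle with one or two already-placed lines genuinely saves a full factor $\rho^{1/2}$ --- i.e.\ that Lemma~\ref{Le:boundint} applies with the correct roles for the old lines and that the incentre is pinned to an effectively one-dimensional locus inside $\window$ rather than to a thicker set --- and that the elementary optimisation of the $\rho$-exponent is indeed dominated by $K-\tfrac12$ for every admissible profile $(d_i)$. By contrast, the first bound and the accumulation of all the powers of $r_1$ into a single $r_1^{c(K)}$, uniform over the finitely many configuration graphs $\depgraph\in\depgraphs$ and cluster profiles $n_{1:K}\in\cN_K$, are routine.
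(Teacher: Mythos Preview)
Your proposal is correct and follows essentially the same approach as the paper: the paper likewise peels off the triangles recursively, using the count $|e_i^\star|$ (your $d_i$) of not-yet-seen lines, applies Blaschke--Petkantschin when $|e_i^\star|=3$ and Lemma~\ref{Le:boundint} when $|e_i^\star|\in\{1,2\}$ to save $\rho^{1/2}$, and uses the cluster/delegate structure to obtain the $\rho^{\sum_k n_k}$ bound. The only organisational difference is that the paper splits into the cases $n_{1:K}=(K,0,\ldots,0)$ and $n_{1:K}\neq(K,0,\ldots,0)$ and proves just the relevant bound in each (since $\sum_k n_k=K>K-\tfrac12$ in the first case and $\sum_k n_k\le K-1<K-\tfrac12$ in the second), whereas you prove both bounds uniformly and then take the minimum; the content is the same.
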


\begin{proof}[Proof of Lemma~\ref{Le:boundint}~\eqref{eq:boundint1}]
The following proof reduces to giving the analagous version of the 
Blaschke-Petkanschin type change of variables (Theorem 7.3.2 in \citet{SW}) in which
one of the lines is held fixed. We proceed in the same spirit as in the proof of
Theorem 7.3.2 in \citet{SW}. Without loss of generality, we can assume that
$z'=0$ since $\measure$ is stationary. Let $H_1\in\affinelines=H(u_1,t_1)$ be
fixed, for some $u_1\in\SSS$ and $t_1\in\RR$. We denote by
$\affinelines^2_{H_1}\subset \affinelines^2$ the set of pairs of lines
$(H_2,H_3)$ such that $H_1$, $H_2$ and $H_3$ are in general position and by
$P_{H_1}\subset\SSS^2$ the set of pairs of unit vectors $(u_2,u_3)$ such that
$0\in\RR^2$ belongs to the interior of the convex hull of $\{u_1,u_2,u_3\}$. Then, the
mapping \begin{align*}
    \phi_{H_1}\colon 
    \RR^2\times P_{H_1}  
    & 
    \longrightarrow
    \affinelines_{H_1}
    \\ 
    (z,u_2,u_3) 
    &\longmapsto 
    (H(u_2,t_2), H(u_3,t_3)),
\end{align*} 
with $t_i:=\langle z, u_i\rangle + r$ and $r:=d(z,H_1)$ is bijective. We can
easily prove that its Jacobian $J_{\Phi_{H_1}}(z,u_2,u_3)$ is bounded. Using the
fact that $d(0,H_1)\leq | z(H_1{1:3})|+R(H_{1:3})<R+R'$ provided that
$z(H_{1:3})\in B(0,R')$ and $R(H_{1:3})<R$, it follows that
\begin{align*}
    G(H_1) 
    & 
    \leq
    \int_{ \RR^2\times P }
    |
        J\phi_{H_1}(z,u_2,u_3)
    |
    \ind{z\in B(0,R')}\ind{d(z,H_1)<R}\ind{d(0,H_1)<R+R'}\spheremeasure(du_{2:3})dz\\
    &  
    \leq
    c
    \cdot
    \lambda_2
    \left(
        B(0,R')
        \cap
        \left(
            H_1
            \oplus
            B(0,R)
        \right)
    \right)
    \ind{d(0,H_1)<R+R'}
    \notag
    \\
    &
    \leq
    c
    \cdot
    R
    \cdot
    R'
    \cdot
    \ind{
        d(0,H_1)<R+R'
    },
\end{align*}
where $A\oplus B$ denotes the Minkowski sum between two Borel sets
$A,B\in\mathcal{B}(\RR^2)$.
\end{proof}

\begin{proof}[Proof of Lemma~\ref{Le:boundint}~\eqref{eq:boundint2}]
Let $H_1$ and $H_2$ be fixed and let $H_3$ be such that $z(H_{1:3})\in B(z',R')$
and $R(H_{1:3})<R$. This implies that
\begin{equation*}
    d(z',H_3)
    \quad
    \leq
    \quad
    |z'-z(H_{1:3})|+d(z(H_{1:3}),H_3)
    \quad
    \leq
    \quad
    R+R'.
\end{equation*}
Integrating over $H_3$, we get
\begin{equation}
    G(H_1,H_2)
    \quad
    \leq
    \quad
    \int_{\affinelines}\ind{d(z',H_3)\leq R+R'}\measure(dH_3)
    \quad
    \leq
    \quad
    c\cdot (R+R').    
\end{equation}
\end{proof}

\begin{proof}[Proof of Lemma~\ref{Le:volumeintegral}]
Our proof will follow by re-writing the set of lines $\{1,\ldots, |\lines|\}$,
as a disjoint union. We take
\begin{equation*}
    \Big\{
        1,\ldots, |\lines|
    \Big\}
    =
    \bigsqcup_{i=1}^K
    e^\star_i
    \qquad
    \text{where}
    \qquad
    e^\star_i 
    :=
    \big\{
        e_i(0),e_i(1),e_i(2)
    \big\} 
    \setminus
    \bigcup_{j<i} 
    \, 
    \big\{
        e_j(0),e_j(1),e_j(2)
    \big\}.
\end{equation*}
In this way, $\{e_i^\star\}_{i\leq K}$ may understood as associating lines of
the process with the inballs of the $K$ cells under consideration, so that no
line is associated with more than one inball. In particular, each inball has
between zero and three lines associated with it, $0 \leq |e^\star_i|\leq 3$ and
$|e^\star_1|=3$ by definition. We now consider two cases depending on the
configuration of the clusters, $n_{1:K} \in \cN_K$.

\paragraph{Independent clusters}
To begin with, we suppose that $n_{1:K}=(K,0,\ldots, 0)$. For convenience,
we shall write
\begin{equation*}
    \measure(dH_{e_i^\star}) := \prod_{j\in e_i^\star}\measure(dH_j),
\end{equation*}
for some arbitrary ordering of the elements, and defining the empty product
to be 1. It follows from Fubini's theorem that
\begin{align}
    F^{(K,0,\ldots, 0)}
    &
    =
    \int_{\affinelines^{3}}
    f(R(B_1(\depgraph)))\ind{z(B_1(\depgraph))\in\window}
    \ind{R(\convex_1(\depgraph))>v'_\rho}
    \notag
    \\
    &\quad
    \times
    \int_{\affinelines^{|e_2^\star|}}
    \ind{z(B_j(\depgraph))\in\window} \ind{R(\convex_2(\depgraph))\leq R(B_1(\depgraph))}
    \notag
    \\[1mm]
    &\quad\quad
    \cdots
    \notag
    \\
    &\quad\quad\quad
    \label{eq:innermost_integral}
    \times\left[
    \int_{\affinelines^{|e_K^\star|}}
\ind{z(B_K(\depgraph))\in\window}    
    \ind{R(\convex_K(\depgraph))\leq R(B_1(\depgraph))}
    \measure(dH_{e_{K}^\star})
    \right]
    \\[1mm]
    &\quad\quad\quad\quad
    \times
    \measure(dH_{e_{K-1}^\star})
    \cdots
    \measure(dH_{e_1^\star}).
    \notag
\end{align}
We now consider three possible cases for the inner-most integral above,
\eqref{eq:innermost_integral}. 
\begin{enumerate}
    \item If $|e_K^\star| = 3$, the integral equals $c\cdot R(B_1(\depgraph))\rho$ 
    after a Blaschke-Petkanschin change of variables.
    \item If $|e_K^\star| = 1,2$, the integral is bounded  by $c\cdot \rho^{1/2}  R(B_1(\depgraph)) $ thanks to  Lemma~\ref{Le:boundint} applied  with $R := R(B_1(\depgraph))$, $R' := \pi^{-1/2}\rho^{1/2}$ .
    \item If $|e_K^\star| = 0$,  the integral decays and we may bound
    the indicators by one. To simplify our notation we just assume the integral 
    is bounded by $c\cdot\rho^{1/2}  R(B_1(\depgraph))$.
\end{enumerate}
To distinguish these cases, we define $x_i := \ind{|e^\star_i| <3}$, giving
\begin{align*}
    F^{(K,0,\ldots, 0)}
    &\leq
   c\cdot \rho^{1- \tfrac{x_{K}}{2}}
    \int_{\affinelines^{3}}
     R(B_1(\depgraph)) ^{c(K)}\cdot
    f(R(B_1(\depgraph)))
    \ind{R(\convex_1(\depgraph))>v'_\rho}    
    \ind{z(B_1(\depgraph))\in\window}
    \\
    &\quad
    \times
    \int_{\affinelines^{|e_2^\star|}}
    \ind{R(\convex_2(\depgraph))\leq R(B_1(\depgraph))}
    \ind{z(B_j(\depgraph))\in\window}
    \notag
    \\[1mm]
    &\quad\quad
    \cdots
    \notag
    \\
    &\quad\quad\quad
    \times
    \left[
    \int_{\affinelines^{|e_{K-1}^\star|}}
    \ind{R(\convex_K(\depgraph))\leq R(B_1(\depgraph))}
    \ind{z(B_{K-1}(\depgraph))\in\window}
    \measure(dH_{e_{K-1}^\star})
    \right]
    \notag
    \\[1mm]
    &\quad\quad\quad\quad
    \times
    \measure(dH_{e_{K-2}^\star})
    \cdots
    \measure(dH_{e_1^\star})
\end{align*}    
Recursively applying the same bound, we deduce from the Blaschke-Petkanschin formula that
\begin{align*}
F^{(K,0,\ldots, 0)} & \leq c\cdot \rho^{\sum_{i=2}^K \left(1-\frac{1}{2}x_i\right)}\int_{\affinelines^3}R(H_{1:3})^{c(K) }f(R(H_{1:3}))\ind{R(H_{1:3})>v'_\rho}\ind{z(H_{1:3})\in\window}\measure(dH_{1:3})\\
& =    c\cdot \rho^{\left(K - \tfrac{1}{2}\sum_{i=2}^{K} x_i\right)} 
    \int_{v'_\rho}^\infty r^{ c(K) }
    \cdot
    f(r) 
    \,
    dr
    \end{align*}
    Since, by assumption $|\lines| < 3K$, it follows that $x_i =1$ for some $i>1$
 
 \begin{equation*}
F^{(K,0,\ldots,0)}
    \leq
    c\cdot\rho^{K-\tfrac{1}{2}} \int_{v'_\rho}^\infty r^{c(K)} \cdot f(r) \, dr,
\end{equation*}
as required.

\paragraph{Dependent clusters}
We now focus on the case in which $n_{1:K}\in \cN_K \setminus
\{(K,0,\ldots,0)\}$. We proceed in the same spirit as before. For any $1\leq
i\neq j\leq K$, we write $B_i(\depgraph)\nleftrightarrow B_j(\depgraph)$ to
specify that the balls $B(z(B_i(\depgraph)), R(B_1(\depgraph))^3)$ and
$B(z(B_j(\depgraph)), R(B_1(\depgraph))^3)$ are not in the same connected
component of $\bigcup_{l=1}^KB(z(B_l(\depgraph)), R(B_1(\depgraph))^3)$. Then we
choose a unique `delegate' convex for each cluster using the following
indicator,
\begin{equation*}
    \alpha_i(B_{1:K}(\depgraph))
    :=
    \ind{
        \forall 
        i < j
        ,
        \;
        B_j(\depgraph)\nleftrightarrow B_i(\depgraph)
    }.
\end{equation*}
It follows that $\sum_{k=1}^{K} n_k = \sum_{i=1}^K\alpha_i(B_{1:K}(\depgraph))$ and
$\alpha_1(B_{1:K}(\depgraph))=1$. The set of all possible ways to select the delegates is 
given by,
\begin{equation*}
    A_{n_{1:K}}
    :=
    \Big\{
        \,
        \alpha_{1:K}\in\{0,1\}^K:\sum_{i=1}^K\alpha_i=\sum_{k=1}^{K}n_k 
        \,
    \Big\}.
\end{equation*}
Then we have,
\begin{align}
    F^{(n_{1:K})}
    &
    =
    \sum_{\alpha_{1:K}\in A_{n_{1:K}}}
    \int_{\affinelines^{3}}
    f(R(B_1(\depgraph)))
    \ind{z(B_1(\depgraph))\in\window}
    \ind{R(\convex_1(\depgraph))>v'_\rho}
    \notag
    \\
    &\quad
    \times
    \int_{\affinelines^{|e_2^\star|}}
    \ind{z(B_j(\depgraph))\in\window}  
    \ind{R(\convex_2(\depgraph))\leq R(B_1(\depgraph))}
    \ind{\alpha_2(B_{1:K}(\depgraph)) = \alpha_2}
    \notag
    \\[1mm]
    &\quad\quad
    \cdots
    \notag
    \\
    &\quad\quad\quad
    \times\left[
    \int_{\affinelines^{|e_K^\star|}}
     \ind{z(B_K(\depgraph))\in\window}
    \ind{R(\convex_K(\depgraph))\leq R(B_1(\depgraph))}
    \ind{\alpha_K(B_{1:K}(\depgraph)) = \alpha_K}
    \measure(dH_{e_{K}^\star})
    \right]
    \notag
    \\[1mm]
    &\quad\quad\quad\quad
    \times
    \measure(dH_{e_{K-1}^\star})
    \cdots
    \measure(dH_{e_1^\star})
    \notag
\end{align}
For this part, we similarly split into multiple cases and recursively bound
the inner-most integral.
\begin{enumerate}
    \item When $\alpha_K = 1$, the integral equals $c\cdot
    R(B_1(\depgraph))\rho$ if $e_K^\star=3$ thanks to the Blaschke-Petkanschin
    formula and is bounded by $c\cdot \rho^{1/2}R(B_1(\depgraph))$ otherwise
    thanks to Lemma~\ref{Le:boundint}. In particular, we bound the integral by
    $c\cdot R(B_1(\depgraph))^{c(K)}\rho^{\alpha_K}$.
\item When $\alpha_K =0$, the integral equals $c\cdot R(B_1(\depgraph))^7$ if $e_K^\star=3$ and is bounded by $c\cdot R(B_1(\depgraph))^{5/2}$ otherwise for similar arguments. In this
case, we can also bound the integral by $c\cdot
R(B_1(\depgraph))^{c(K)}\rho^{\alpha_K}$.
\end{enumerate}
Proceeding in the same way and recursively for all $2\leq i\leq K$, we get
\begin{align*}
        F^{(n_{1:K})} 
        & 
        \leq 
        c
        \cdot 
        \!\!\!
        \sum_{\alpha_{1:K}\in A_{n_{1:K}}}
        \rho^{\sum_{i=2}^K \alpha_i}
        \int_{\affinelines^3}
             R(B_1(\depgraph))^{c(K)} 
        f(R(H_{1:3})) 
        \ind{z(B_1(\depgraph))\in\window}
        \ind{R(B_1(\depgraph))>v'_\rho}
        \measure(dH_{1:3})
        \\
        & 
        =
        c
        \cdot
        \!\!\!
        \sum_{\alpha_{1:K} \in A_{n_{1:K}}}
        \rho^{
                \sum_{i=2}^K\alpha_i
        }
        \rho 
        \int_{v'_\rho}^\infty r^{c(K)}f(r) dr
        \\
        & 
        \leq 
        c\cdot 
        \rho^{\sum_{k=1}^K n_k}
        \int_{v'_\rho}^\infty r^{c(K)}f(r)dr,
\end{align*}
since 
\begin{equation*}
    \sum_{i=2}^K\alpha_i +1 
    \;=\; 
    \sum_{i=1}^K\alpha_i
    \;=\;
    \sum_{k=1}^Kn_k. 
\end{equation*}
\end{proof}
\paragraph{Acknowledgements} This work was partially supported by the French
ANR grant PRESAGE (ANR-11-BS02-003) and the French research group GeoSto 
(CNRS-GDR3477).

\bibliographystyle{plainnat}
\bibliography{BiblioVE}

\begin{thebibliography}{23}
\providecommand{\natexlab}[1]{#1}
\providecommand{\url}[1]{\texttt{#1}}
\expandafter\ifx\csname urlstyle\endcsname\relax
  \providecommand{\doi}[1]{doi: #1}\else
  \providecommand{\doi}{doi: \begingroup \urlstyle{rm}\Url}\fi

\bibitem[Beermann et~al.(2014)Beermann, Redenbach, and Th{\"a}le]{BRT}
M.~Beermann, C.~Redenbach, and C.~Th{\"a}le.
\newblock Asymptotic shape of small cells.
\newblock \emph{Mathematische Nachrichten}, 287:\penalty0 737--747, 2014.

\bibitem[Calka(2003)]{Cal7}
P.~Calka.
\newblock Precise formulae for the distributions of the principal geometric
  characteristics of the typical cells of a two-dimensional {P}oisson-{V}oronoi
  tessellation and a {P}oisson line process.
\newblock \emph{Adv. in Appl. Probab.}, 35\penalty0 (3):\penalty0 551--562,
  2003.
\newblock ISSN 0001-8678.
\newblock \doi{10.1239/aap/1059486817}.
\newblock URL \url{http://dx.doi.org/10.1239/aap/1059486817}.

\bibitem[Calka and Chenavier(2014)]{CC}
P.~Calka and N.~Chenavier.
\newblock Extreme values for characteristic radii of a {P}oisson-{V}oronoi
  tessellation.
\newblock \emph{Extremes}, 17\penalty0 (3):\penalty0 359--385, 2014.
\newblock ISSN 1386-1999.
\newblock \doi{10.1007/s10687-014-0184-y}.
\newblock URL \url{http://dx.doi.org/10.1007/s10687-014-0184-y}.

\bibitem[Charikar(2002)]{C}
M.~S. Charikar.
\newblock Similarity estimation techniques from rounding algorithms.
\newblock In \emph{Proceedings of the thiry-fourth annual ACM symposium on
  Theory of computing}, pages 380--388. ACM, 2002.

\bibitem[Chenavier(2014)]{Chen}
N.~Chenavier.
\newblock A general study of extremes of stationary tessellations with
  examples.
\newblock \emph{Stochastic Process. Appl.}, 124\penalty0 (9):\penalty0
  2917--2953, 2014.
\newblock ISSN 0304-4149.
\newblock \doi{10.1016/j.spa.2014.04.009}.
\newblock URL \url{http://dx.doi.org/10.1016/j.spa.2014.04.009}.

\bibitem[de~Haan and Ferreira(2006)]{HF}
L.~de~Haan and A.~Ferreira.
\newblock \emph{Extreme value theory. An introduction}.
\newblock Springer Series in Operations Research and Financial Engineering.
  Springer, New York, 2006.
\newblock ISBN 978-0-387-23946-0; 0-387-23946-4.

\bibitem[Goudsmit(1945)]{Gou}
S.~Goudsmit.
\newblock Random distribution of lines in a plane.
\newblock \emph{Rev. Modern Phys.}, 17:\penalty0 321--322, 1945.
\newblock ISSN 0034-6861.

\bibitem[Heinrich(2009)]{H4}
L.~Heinrich.
\newblock Central limit theorems for motion-invariant {P}oisson hyperplanes in
  expanding convex bodies.
\newblock \emph{Rend. Circ. Mat. Palermo Ser. II Suppl}, 81:\penalty0 187--212,
  2009.

\bibitem[Heinrich et~al.(2006)Heinrich, Schmidt, and Schmidt]{HSS2}
L.~Heinrich, H.~Schmidt, and V.~Schmidt.
\newblock Central limit theorems for {P}oisson hyperplane tessellations.
\newblock \emph{Ann. Appl. Probab.}, 16\penalty0 (2):\penalty0 919--950, 2006.
\newblock ISSN 1050-5164.
\newblock \doi{10.1214/105051606000000033}.
\newblock URL \url{http://dx.doi.org/10.1214/105051606000000033}.

\bibitem[Hsing(1988)]{HT}
T.~Hsing.
\newblock On the extreme order statistics for a stationary sequence.
\newblock \emph{Stochastic Process. Appl.}, 29\penalty0 (1):\penalty0 155--169,
  1988.
\newblock ISSN 0304-4149.
\newblock \doi{10.1016/0304-4149(88)90035-X}.
\newblock URL \url{http://dx.doi.org/10.1016/0304-4149(88)90035-X}.

\bibitem[Hug and Schneider(2014)]{HS4}
D.~Hug and R.~Schneider.
\newblock Approximation properties of random polytopes associated with
  {P}oisson hyperplane processes.
\newblock \emph{Adv. in Appl. Probab.}, 46\penalty0 (4):\penalty0 919--936,
  2014.
\newblock ISSN 0001-8678.
\newblock \doi{10.1239/aap/1418396237}.
\newblock URL \url{http://dx.doi.org/10.1239/aap/1418396237}.

\bibitem[Hug et~al.(2004)Hug, Reitzner, and Schneider]{HRS}
D.~Hug, M.~Reitzner, and R.~Schneider.
\newblock The limit shape of the zero cell in a stationary {P}oisson hyperplane
  tessellation.
\newblock \emph{Ann. Probab.}, 32\penalty0 (1B):\penalty0 1140--1167, 2004.
\newblock ISSN 0091-1798.
\newblock \doi{10.1214/aop/1079021474}.
\newblock URL \url{http://dx.doi.org/10.1214/aop/1079021474}.

\bibitem[Leadbetter(1973/74)]{L1}
M.~R. Leadbetter.
\newblock On extreme values in stationary sequences.
\newblock \emph{Z. Wahrscheinlichkeitstheorie und Verw. Gebiete}, 28:\penalty0
  289--303, 1973/74.

\bibitem[Leadbetter and Rootz{\'e}n(1998)]{LR}
M.~R. Leadbetter and H.~Rootz{\'e}n.
\newblock On extreme values in stationary random fields.
\newblock In \emph{Stochastic processes and related topics}, Trends Math.,
  pages 275--285. Birkh\"auser Boston, Boston, MA, 1998.

\bibitem[Miles(1964{\natexlab{a}})]{Mi2}
R.~E. Miles.
\newblock Random polygons determined by random lines in a plane.
\newblock \emph{Proc. Nat. Acad. Sci. U.S.A.}, 52:\penalty0 901--907,
  1964{\natexlab{a}}.
\newblock ISSN 0027-8424.

\bibitem[Miles(1964{\natexlab{b}})]{Mi3}
R.~E. Miles.
\newblock Random polygons determined by random lines in a plane. {II}.
\newblock \emph{Proc. Nat. Acad. Sci. U.S.A.}, 52:\penalty0 1157--1160,
  1964{\natexlab{b}}.
\newblock ISSN 0027-8424.

\bibitem[Penrose(2003)]{Pr}
M.~Penrose.
\newblock \emph{Random geometric graphs}, volume~5 of \emph{Oxford Studies in
  Probability}.
\newblock Oxford University Press, Oxford, 2003.
\newblock ISBN 0-19-850626-0.
\newblock \doi{10.1093/acprof:oso/9780198506263.001.0001}.
\newblock URL
  \url{http://dx.doi.org/10.1093/acprof:oso/9780198506263.001.0001}.

\bibitem[Plan and Vershynin(2014)]{VY}
Y.~Plan and R.~Vershynin.
\newblock Dimension reduction by random hyperplane tessellations.
\newblock \emph{Discrete \& Computational Geometry}, 51\penalty0 (2):\penalty0
  438--461, 2014.

\bibitem[Resnick(1987)]{R}
S.~I. Resnick.
\newblock \emph{Extreme values, regular variation, and point processes},
  volume~4 of \emph{Applied Probability. A Series of the Applied Probability
  Trust}.
\newblock Springer-Verlag, New York, 1987.
\newblock ISBN 0-387-96481-9.

\bibitem[Santal{\'o}(2004)]{Sa}
L.~A. Santal{\'o}.
\newblock \emph{Integral geometry and geometric probability}.
\newblock Cambridge Mathematical Library. Cambridge University Press,
  Cambridge, second edition, 2004.
\newblock ISBN 0-521-52344-3.
\newblock \doi{10.1017/CBO9780511617331}.
\newblock URL \url{http://dx.doi.org/10.1017/CBO9780511617331}.
\newblock With a foreword by Mark Kac.

\bibitem[Schneider and Weil(2008)]{SW}
R.~Schneider and W.~Weil.
\newblock \emph{Stochastic and integral geometry}.
\newblock Probability and its Applications (New York). Springer-Verlag, Berlin,
  2008.
\newblock ISBN 978-3-540-78858-4.
\newblock \doi{10.1007/978-3-540-78859-1}.
\newblock URL \url{http://dx.doi.org/10.1007/978-3-540-78859-1}.

\bibitem[Schulte and Th{\"a}le()]{ST2}
M.~Schulte and C~Th{\"a}le.
\newblock Poisson point process convergence and extreme values in stochastic
  geometry.
\newblock In \emph{Stochastic analysis for {P}oisson point processes:
  {M}alliavin calculus, {W}iener-It\^{o} chaos expansions and stochastic
  geometry}, B, Vol 7. Peccati, G. and Reitzner, M.

\bibitem[Schulte and Th{\"a}le(2012)]{ST}
M.~Schulte and C.~Th{\"a}le.
\newblock The scaling limit of {P}oisson-driven order statistics with
  applications in geometric probability.
\newblock \emph{Stochastic Process. Appl.}, 122\penalty0 (12):\penalty0
  4096--4120, 2012.
\newblock ISSN 0304-4149.
\newblock \doi{10.1016/j.spa.2012.08.011}.
\newblock URL \url{http://dx.doi.org/10.1016/j.spa.2012.08.011}.

\end{thebibliography}

\end{document}